\documentclass[a4paper,final]{amsart} %todo: add back final

\usepackage{style}      % Custom style

\title{Tropical Poincaré duality spaces}
\author{Edvard Aksnes}
\address{University of Oslo \\ Moltke Moes vei 35, 0851 Oslo, Norway}
\email{\href{mailto:edvardak@math.uio.no}{edvardak@math.uio.no}}

\begin{document}
    \begin{abstract}
        The tropical fundamental class of a rational balanced polyhedral fan induces  cap products between tropical cohomology and tropical Borel--Moore homology. When all these cap products are isomorphisms, the fan is said to be a \emph{tropical Poincaré duality space}. If all the stars of faces also are such spaces, such as for fans of matroids, the fan is called a \emph{local tropical Poincaré duality space}.

        In this article, we first give some necessary conditions for fans to be tropical Poincaré duality spaces and a classification in dimension one. Next, we prove that tropical Poincaré duality for the stars of all faces of dimension greater than zero and a vanishing condition  implies tropical Poincaré duality of the fan. This leads to necessary and sufficient conditions for a fan to be a local tropical Poincaré duality space. Finally, we use such fans to show that certain abstract balanced polyhedral spaces satisfy tropical Poincaré duality.
    \end{abstract}
    \maketitle
    \section{Introduction}
% Tropical geometry forms a bridge between algebraic geometry and combinatorics. For a field with trivial valuations, the process of \emph{tropicalizing} subvarieties of tori gives rational polyhedral fans. In general, an \emph{abstract tropical cycle} is a topological space which is locally homeomorphic to rational polyhedral fans which are \emph{balanced}: each codimension one face satisfies a geometric balancing condition. Tropical cycles can be equipped with tropical homology and cohomology groups, and come equipped with a fundamental class. This fundamental class can be used to construct a cap product map from cohomology to homology. In this paper, we study when this cap product is an isomorphism.

For an integer $p \geq 0$, a rational polyhedral fan $\Sigma$ (\cref{def:polyhedral_fan}) and a commutative ring $R$, \cite{IKMZ} introduced the \emph{tropical homology} $H_\bullet (\Sigma, \F_p^R)$ and \emph{tropical Borel--Moore homology} $H_\bullet^{BM} (\Sigma, \F_p^R)$, along with dual constructions of \emph{tropical cohomology} $H^\bullet(\Sigma, \F_R^p)$ and \emph{tropical cohomology with compact support} $H_c^\bullet(\Sigma, \F_R^p)$, see \cref{def:tropical_homology_cohomology}. These can be computed in many different ways, see e.g. \cite{IKMZ,MZ14,JellShawSmacka,GrossShokriehSheaf}.

The \emph{balancing condition} of tropical geometry (see \cite[Definition 5.8]{BIMS14}), can be formulated homologically as the existence of a particular \emph{fundamental class} $[\Sigma,w] \in H_d^{BM}(\Sigma, \F_d^R)$ in tropical Borel--Moore homology (\cite[Proposition 4.3]{MZ14}, \cite[Remark 4.9]{Lefschetz11} and \cref{def:balanced}), depending on assigning $R$-valued weights $w$ to maximal faces. One can use the fundamental class to define a \emph{cap product}
$$\frown [\Sigma,w] \colon H^q (\Sigma,\F_R^{p})\to H_{d-q}^{BM}(\Sigma, \F_{d-p}^R)$$
for all $p,q \in \set{0,\dots,d}$, see \cite[Definition 4.11]{Lefschetz11} and \cref{def:cap_product}. If these maps are isomorphisms for all $p,q \in \set{0,\dots,d}$, one says that the fan satisfies \emph{tropical Poincaré duality over $R$} or is a \emph{tropical Poincaré duality space over $R$}, see \cref{def:tpd}. We use the abbreviation TPD for tropical Poincaré duality.

This paper, which generalizes and deepens the results from the author's master's thesis \cite{mastersthesis}, studies two questions related to tropical Poincaré duality over a given commutative ring $R$.
\begin{question}\label{question:tpd_when}
	Which fans satisfy TPD over $R$?	
\end{question}
The fan of a matroid is a TPD space over $\R$ and $\Z$ by \cite[Proposition 4.27]{JellShawSmacka} and \cite{Lefschetz11}. Moreover, motivating the question, there are fans satisfying TPD which are not fans of matroids, see \cref{ex:not_all_tpd_are_bergman_fans}.

A useful property of the cap product is that, for any commutative ring R, when it is non-zero, it is injective (\cref{prop:cap_injective}). Using this in the case where $R$ is a field, we can work with Euler characteristics and dimensions of homology groups to give a criterion for a fan to have TPD, under some vanishing assumptions (\cref{prop:euler_char_condition}).
Furthermore, we completely classify one-dimensional TPD spaces over an arbitrary commutative ring $R$.
\newtheorem*{thm:dim1}{\cref{thm:class_dim_1}}
\begin{thm:dim1}
	Let $R$ be a commutative ring, and $(\Sigma,w)$ an $R$-balanced fan of dimension one.
	Then $(\Sigma,w)$ satisfies tropical Poincaré duality over $R$ if and only if it is uniquely $R$-balanced and all the weights are units in $R$.
\end{thm:dim1}
In \cref{prop:fan_tropical_hypersurface}, we show that fan tropical hypersurfaces in $\R^n$ must have simplexes as Newton polytopes.

\begin{question}\label{question:local_tpd_when}
	Which fans satisfy TPD over $R$ at each of its faces?
\end{question}
By this, we mean that for each face $\gamma \in \Sigma$, the \emph{star fan} $\Star{\gamma}$ (\cref{def:stars_cones}) should be a TPD space over $R$. We will call this type of fans \emph{local tropical Poincaré duality spaces over R} (\cref{def:local_TPD_space}), which is equivalent to the notion of tropical smoothness defined by Amini and Piquerez \cite{AminiPiquerezFans} for $R = \Z$. Fans of matroids can be shown to be local TPD spaces.

Straddling the space between \cref{question:tpd_when} and \cref{question:local_tpd_when}, we prove the following theorem, which shows that when the stars of the faces of a fan are TPD spaces, so is the whole fan, under some vanishing conditions on Borel–-Moore homology.
\newtheorem*{thm:tpd_from_faces}{\cref{thm:stars_poincare_duality}}
\begin{thm:tpd_from_faces}
	Let $R$ be a principal ideal domain, and $(\Sigma,w)$ be an $R$-balanced fan of dimension $d\geq 2$, with $H_{q}^{BM}(\Sigma, \F_{p}^R) = 0$ for $q\not = d$, for all $p$.
	If $(\Star{\gamma},w)$ satisfies TPD over $R$, for each $\gamma\in \Sigma$ with $\Star{\gamma} \neq \Sigma$, then $(\Sigma,w)$ satisfies TPD over $R$.
\end{thm:tpd_from_faces}
Noticing the similarity of this result to the conditions for being a local TPD space, we are led to the following characterization of local TPD spaces.
\newtheorem*{thm:stars}{\cref{thm:local_tpds_char}}
\begin{thm:stars}
	Let $R$ be a principal ideal domain, and $(\Sigma,w)$ a $d$-dimensional $R$-balanced fan. Then $\Sigma$ is a local TPD space over $R$ if and only if $H_q^{BM}(\Star{\gamma}, \F_{p}^R)=0$ for all $\gamma\in \Sigma$ and $q\neq d$, and for all faces $\beta$ of codimension 1, the star fans $\Star{\beta}$ are TPD spaces over $R$.
\end{thm:stars}

In the two-dimensional case, we use \cref{thm:stars_poincare_duality} to show that, assuming the vanishing of parts of Borel–Moore homology, a fan is a TPD space if and only if it is a local TPD space, see \cref{prop:class_dim_two}. This motivates two new questions.
\newtheorem*{question:BM_vanishing_intro}{\cref{question:BM_vanishing}}
\begin{question:BM_vanishing_intro}[Geometry of BM homology vanishing]
	Let $(\Sigma,w)$ be an $R$-balanced $d$-dimensional fan. Can the fans with $H_q^{BM}(\Star{\gamma}, \F_{p}^R)=0$ for each face $\gamma\in \Sigma$, $q\neq d$ and all $p$ be geometrically characterized?
\end{question:BM_vanishing_intro}
\newtheorem*{ques:global_vs_local}{\cref{question:global_vs_local_fan}}
\begin{ques:global_vs_local}[Global versus Local TPD]
	Let $(\Sigma,w)$ be an $R$-balanced fan which satisfies TPD over $R$. Does $\Star{\gamma}$ also satisfy TPD over $R$ for each $\gamma \in \Sigma$?
\end{ques:global_vs_local}

In the final part of this paper, we turn to generalizations for \emph{rational polyhedral spaces}, see \cite{Lefschetz11,JellShawSmacka}, and \emph{abstract tropical $R$-cycle} (see \cref{def:abstract_tropical_cycle}). These can be equipped with tropical homology and cohomology groups, and a balancing condition for abstract tropical $R$-cycles leads to cap products. 
\emph{Tropical manifolds} are spaces equipped with charts to Bergman fans of matroids. These are studied in \cite{JellShawSmacka,Lefschetz11,GrossShokriehSheaf}, and are shown to satisfy TPD over $\R$ and $\Z$. Note also that for tropical Calabi-Yau hypersurfaces, in the $p=q=1$ case, there is a cap product map which can be shown to be an isomorphism by recent work of \cite{Ruddat}.

The Mayer--Vietoris arguments used in \cite{Lefschetz11} to show TPD on tropical manifolds can be applied more broadly. We say that an abstract tropical cycle is a \emph{local TPD space over $R$} if it is built from fans which are local TPD spaces over $R$. These are the building blocks of the smooth tropical cycles as defined in \cite{AminiPiquerezFans}. We then prove the following theorem.
\newtheorem*{thm:local_tpd_abstract}{\cref{thm:local_tpd_polyspaces_have_tpd}}
\begin{thm:local_tpd_abstract}
	Let $X$ be a local tropical Poincaré duality space over $R$. Then $X$ satisfies tropical Poincaré duality over $R$.
\end{thm:local_tpd_abstract}
Recently, \cite{AminiPiquerez} establishes a full ``Kähler package'' for smooth projective tropical cycles, working with rational coefficients. They relate TPD of the canonical compactifications of Bergman fans of matroids to the Poincaré duality of the \emph{Chow ring} of a matroid established in \cite{AdiprasitoHuhKatz}, which was used in proving the Heron--Rota--Welsh conjecture.
It is suggested in \cite{HuhTropGeomMatroid} that such ``Chow rings'' satisfying three properties, collectively dubbed the ``Hodge package'', should be responsible for the log-concavity of many sequences which arise in mathematics. 

In forthcoming work \cite{AAPS}, the authors show that the Tropical Poincaré duality property is a critical ingredient in relating the topology of a variety to the tropical cohomology of its tropicalization.

\subsection*{Organization}
In \cref{section:Preliminaries}, we set conventions for fans, stars and integer weights. Then we define cellular (co)sheaves and cellular (co)sheaf (co)homology.

In \cref{section:balancing}, we define the tropical multi-tangent cosheaves and sheaves, which we
use to define tropical (co)homology. This is used to describe a generalized version of the balancing condition in tropical geometry, to generalize beyond integer weights.

In \cref{section:Tropical_Poincare_duality}, we define the TPD over a ring $R$, and give some necessary conditions. Moreover, we give a complete classification in dimension one, and some criteria in codimension one of $\R^n$ for TPD to hold, which forms a first step towards answering \cref{question:tpd_when}.

In \cref{section:local}, we turn to \cref{question:local_tpd_when}. We first relate TPD at the stars of faces to TPD of the whole fan, which is then used to characterize local TPD spaces. We then use our dimension one result to give a more geometric description of the characterization. 

Finally, in \cref{section:poly_complexes}, we use local TPD spaces to construct abstract tropical cycles satisfying tropical Poincaré duality.

\subsection*{Acknowledgments}
I wish to thank Kris Shaw for the many comments, ideas and discussions which have made this article possible, as well as for supervising the master’s thesis from which it is inspired. Thank you to Cédric Le Texier and Simen Moe for many conversations and suggestions. I also would like to thank Omid Amini and Matthieu Piquerez for sharing an early draft of their article \cite{AminiPiquerezFans} and suggesting a new result. Finally, I thank the anonymous referee for the insightful suggestions that have improved this article. This research was supported by the Trond Mohn Foundation project ``Algebraic and Topological Cycles in Complex and Tropical Geometries''.
    \section{Preliminaries}\label{section:Preliminaries}
In this section, we define and give references to the main objects and concepts used in the remainder of the article. In \cref{subsection:cones_fans_stars}, we introduce some conventions for weighted fans and the balancing condition, and for cellular sheaves and cosheaves in \cref{subsection:cellular_sheaves_cosheaves}. Finally, we introduce notions of homology and cohomology of cosheaves and sheaves in \cref{subsection:cellular_homology_cohomology}.
\subsection{Cones, fans and stars}\label{subsection:cones_fans_stars}
Let $N\cong \Z^n$ be a lattice, and $N_\R = N \otimes_\Z \R \cong \R^n$ be the associated real vector space.
\begin{definition} \label{def:cone}
    A \emph{rational polyhedral cone} $\sigma$ in a lattice $N$ is a set of the form  
	$$\sigma = \left\{ \textstyle{\sum_{i=1}^m} a_i v_i \mid a_i \in \Z_{\geq 0}\right\}\subset N$$ 
	for vectors $v_i \in N$, such that $\sigma_\R = \sigma \otimes_\Z \R \subset N_\R$ is closed and strictly convex, hence has a vertex at the origin.
	
	The \emph{lattice} $L_\Z (\sigma)$ is the saturated sublattice of $N$ generated by $\sigma$, and the \emph{dimension} of a cone is the rank of $L_\Z (\sigma)$.
	
	Another cone $\tau$ is said to be a \emph{face} of $\sigma$ if there is some element 
	$m\in \Hom_\Z(N,\Z)$, with $m(x)\geq 0$ for all $x\in \sigma$, i.e. a positive functional, such that $\tau=\set{x\in \sigma \mid m(x)=0}$.
	Any face can also be exhibited by setting particular coefficients $a_i$ to $0$.

	For $\tau$ a face of $\sigma$, the set $L_\Z (\tau) \subset L_\Z (\sigma)$ is a sublattice. For $\dim \tau = \dim \sigma -1$, we may select a \emph{primitive integer vector} $v_{\sigma / \tau} \in  N$ such that $L_\Z (\sigma) = L_\Z (\tau) + \Z v_{\sigma / \tau}$.
\end{definition} 
\begin{definition}\label{def:polyhedral_fan}
    A \emph{rational polyhedral fan} $\Sigma$ is a finite collection of rational polyhedral cones in $N$ such that:
    \begin{itemize}
        \item For any cone $\sigma \in \Sigma$, if $\tau$ is a face of $\sigma$, then $\tau \in \Sigma$,
        \item For $\sigma_1, \sigma_2 \in \Sigma$, the intersection $\sigma_1 \cap \sigma_2$ is a face of $\sigma_1$ and $\sigma_2$.
    \end{itemize}
    The cones in $\Sigma$ are also called \emph{faces}, and the collection of faces of dimension $i$ is denoted by $\Sigma^i$.
	The \emph{dimension of $\Sigma$} is the supremum of the dimensions of cones of $\Sigma$. 
    We write $\tau \preceq \sigma$ if $\tau$ is a face of $\sigma$ and $\tau \prec \sigma$ if $\tau$ is a proper face, which gives a partial ordering on $\Sigma$. We say that a face $\sigma \in \Sigma$ is \emph{maximal} if it is maximal with respect to the ordering $\preceq$.  We will require that all fans are \emph{pure dimensional} in the sense that all maximal by inclusion faces are of equal dimension.
\end{definition}
Abusing notation, we also write $\Sigma$ for the category associated to the partial ordering $\preceq$, whose objects are the cones $\sigma \in \Sigma$, with a morphism $\tau \to \sigma \in \Hom_\Sigma (\tau, \sigma)$ if and only if $\tau \preceq \sigma$. 

Note that all cones intersect in a common minimal cell, and since we required each cone to have a vertex, this is the unique vertex $v$ in $\Sigma$. Moreover, a rational polyhedral fan corresponds to a \emph{cell complex} in the sense of \cite{Shepard,Curry}, when considering the fan as glued abstractly from the interiors of the cones.
\begin{example}\label{ex:cross}
	Consider the fan $\Sigma$ displayed in \cref{fig:cross}, which consists of the rays $\tau_1, \tau_2, \tau_3, \tau_4$ and the vertex $v$. The fan is pure dimensional, its maximal faces are the $\tau_i$ and it has dimension $1$. It consists of the union of the line $x=0$ and $y=0$ when considered in $N_\R$. We have $v \prec \tau_i$ for each $i$.
\end{example}
\begin{example}\label{ex:complete}
	Another example of a fan is shown in \cref{fig:complete}. This fan has one vertex $v$, three one-dimensional cones $\tau_i$, and three two-dimensional cones $\sigma_i$. For instance, the faces of $\sigma_1$ are the cones $\tau_1$ and $\tau_2$ as well as the vertex $v$.
\end{example}
\begin{figure}[h]
	\centering
	\begin{minipage}[b]{.5\textwidth}
		\centering
		\begin{tikzpicture}
    % \fill [gray, fill opacity=0.5] (0,0) -- (2,0) -- (2,2) -- (0,2) -- cycle;
    
    %% lattice
    \foreach \X in {-2,...,2}{
        \foreach \Y in {-2,...,2}{
            \draw[fill=black] (\X,\Y) circle (1pt);
        }
    }

    %% labels
    \node at (-0.2,-0.2) {$v$};
    \node at (1.5,-0.3) {$\tau_1$};
    \node at (-0.3,1.5) {$\tau_2$};
    \node at (-1.5,-0.3) {$\tau_3$};
    \node at (-0.3,-1.5) {$\tau_4$};
    
    % lines
    \draw (0,0) -- (2,0);
    \draw (0,0) -- (-2,0);
    \draw (0,0) -- (0,2);
    \draw (0,0) -- (0,-2);

    % arrows
    \draw[thick, ->] (0,0) -- (0.97,0);
    \draw[thick, ->] (0,0) -- (-0.97,0);
    \draw[thick, ->] (0,0) -- (0,0.97);
    \draw[thick, ->] (0,0) -- (0,-0.97);

\end{tikzpicture}
		\captionof{figure}{The cross} 
		\label{fig:cross}
	\end{minipage}%
	\begin{minipage}[b]{.5\textwidth}
		\centering
		\begin{tikzpicture}
    \fill [gray, fill opacity=0.5] (0,0) -- (2,0) -- (2,2) -- (0,2) -- cycle;
    
    \fill [gray, fill opacity=0.5] (0,0) -- (2,0) -- (2,-2) -- (-2,-2) -- cycle;
    
    \fill [gray, fill opacity=0.5] (0,0) -- (0,2) -- (-2,2) -- (-2,-2) -- cycle;
    
    %% lattice
    \foreach \X in {-2,...,2}{
        \foreach \Y in {-2,...,2}{
            \draw[fill=black] (\X,\Y) circle (1pt);
        }
    }

    % labels
    \node at (-0.2,0) {$v$};
    \node at (1.5,-0.3) {$\tau_1$};
    \node at (-0.3,1.5) {$\tau_2$};
    \node at (-1.5,-1.3) {$\tau_3$};
    
    \node at (1.5, 1.5) {$\mathbf{\sigma_1}$};
    \node at (0.5,-1.5) {$\mathbf{\sigma_2}$};
    \node at (-1.5,0.5) {$\mathbf{\sigma_3}$};
    
    % lines
    \draw (0,0) -- (2,0);
    \draw (0,0) -- (0,2);
    \draw (0,0) -- (-2,-2);

    % arrows
    \draw[thick, ->] (0,0) -- (0.97,0);
    \draw[thick, ->] (0,0) -- (0,0.97);
    \draw[thick, ->] (0,0) -- (-0.97,-0.97);

\end{tikzpicture}
		\captionof{figure}{The complete fan} 
		\label{fig:complete}
	\end{minipage}
\end{figure}

Fans of particular interest in tropical geometry are the \emph{Bergman fans} of matroids (see \cite{ZharkovOrlikSolomonAlg,BergmanComplexPhyloTree} for definitions). These serve as the local models of abstract tropical manifolds (see \cite[Section 1.6]{MZ14}).
\begin{example}
	Let $M$ be a matroid on $E=\set{0,\dots, n}$ with lattice of flats $\mathscr{L}$, and let $\Z \set{e_0, \dots, e_n}$ be the lattice of rank $n+1$ generated by elements $e_0, \dots, e_n$. Let $N$ be the quotient defined by 
	\[\begin{tikzcd}[column sep = small]
		0 & {\Z\{e_0 + \dots + e_n\}} & {\Z \{e_0, \dots, e_n\}} & N & 0.
		\arrow[from=1-1, to=1-2]
		\arrow[from=1-2, to=1-3]
		\arrow["\pi", from=1-3, to=1-4]
		\arrow[from=1-4, to=1-5]
	\end{tikzcd}\]
	
	For any subset $S\subseteq E$, let 
	$p_S = \sum_{i \in S} \pi(e_i)$
	in $N$, so that in particular $p_E = 0$. For any chain $F_\bullet$ of flats of the matroid $M$,
	$$F_\bullet = \set{\emptyset \subsetneq F_1 \subsetneq \dots \subsetneq F_k \subsetneq E} \subseteq \mathscr{L}.$$
	the cone associated to $F_\bullet$ is the non-negative span 
	$$\sigma(F_\bullet) = \left\{ \sum_{i=1}^k a_i p_{F_i} \mid a_i \geq 0, \, i = 1,\dots, k \right\}.$$
	The Bergman fan of $M$ is the simplicial fan $\Sigma(M)$ consisting of cones $\sigma(F_\bullet)$ for all flags of flats $F_\bullet$.

	The $U_{3,4}$ matroid on the set $E=\set{0,\dots,3}$ given by the rank function $r\colon 2^E \to \Z_{\geq 0}$ taking values $r(S)= \min(|S|,3)$ has the lattice of flats given by \cref{fig:lattice_of_flats_u34}. The Bergman fan of this matroid is shown in \cref{fig:bergman_fan_u34}.
	\begin{figure}
		\centering
		\begin{minipage}[b]{.5\textwidth}
			\centering
			\includegraphics[width=.6\linewidth]{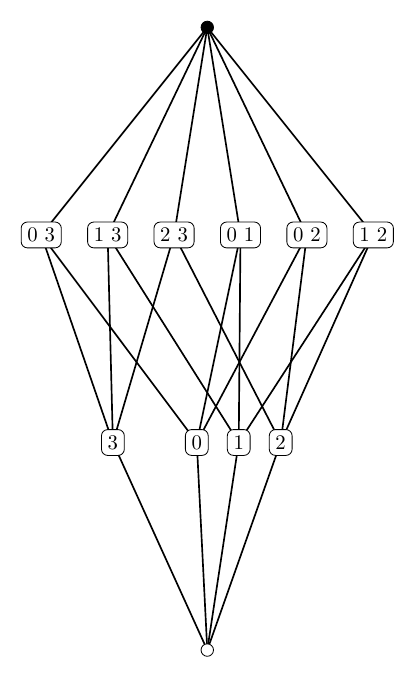}
			\captionof{figure}{Lattice of flats of the $U_{3,4}$ matroid.} 
			\label{fig:lattice_of_flats_u34}
		\end{minipage}%
		\begin{minipage}[b]{.5\textwidth}
			\centering
			\includegraphics[width=.8\linewidth]{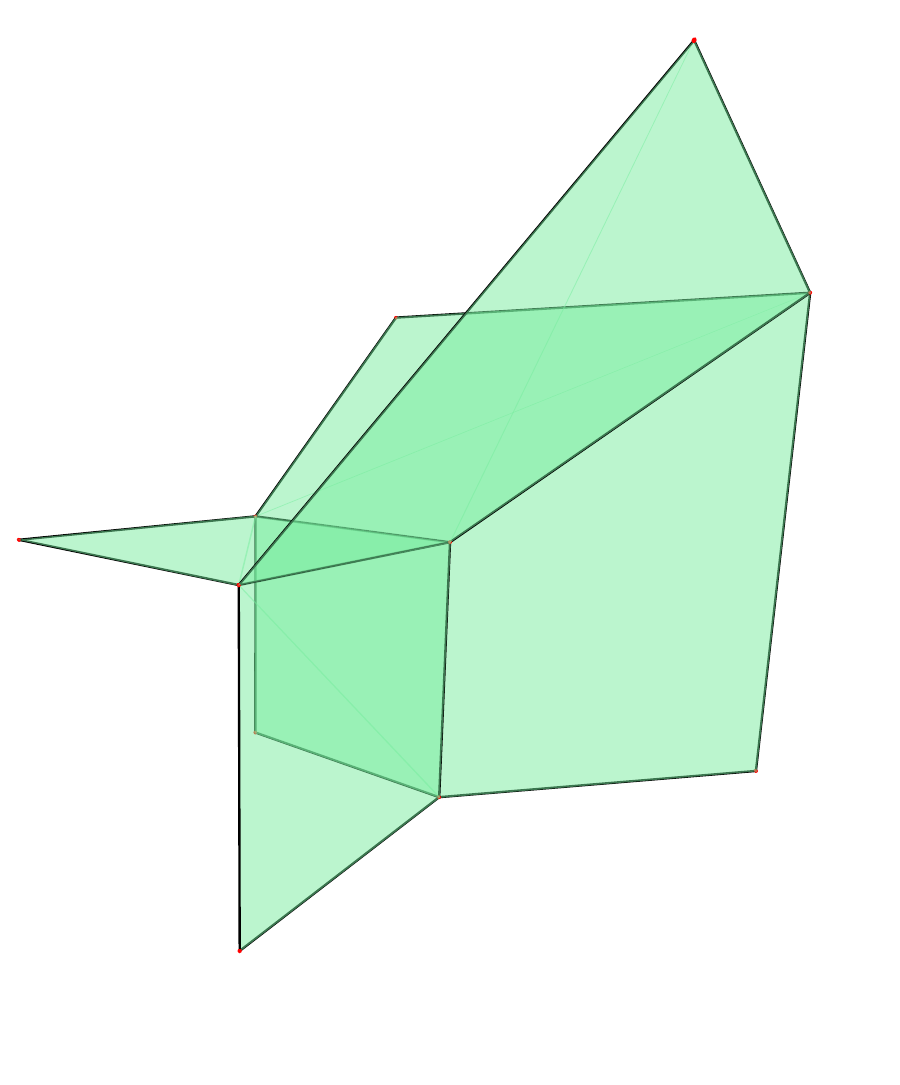}
			\captionof{figure}{Bergman fan of the $U_{3,4}$ matroid, visualization by \cite{polymake}.}
			\label{fig:bergman_fan_u34}
		\end{minipage}
	\end{figure}
\end{example}
\begin{definition}\label{def:stars_cones}
	The \emph{star} $\Star{\gamma}$ at a cone $\gamma \in \Sigma$ is the rational polyhedral fan with underlying set $\cup_{\gamma \preceq \kappa} \widetilde{\kappa}\subset N$, where $\widetilde{\kappa}=\set{t (x-y) \mid t\in \Z_{\geq 0}, x \in \kappa, y \in \gamma}\subseteq N$, subdivided into rational polyhedral cones with a shared vertex.

	The \emph{cone} at a face $\gamma \in \Sigma$ is the fan $\Cone{\gamma}$ consisting of the faces $\kappa \in \Sigma$ for each $\kappa \preceq \gamma$. In particular the vertex $v$ of $\Sigma$ is the minimal cell in each cone.
\end{definition}
\begin{example} 
	We give two examples of stars:
	\begin{enumerate}
		\item In the fan from \cref{ex:complete}, the cone $\tau_1$ is contained in the cones $\sigma_1$ and $\sigma_2$. These give rise to the sets $\widetilde{\sigma_1}=\set{(a,b)\in N \mid a\geq 0}$ and $\widetilde{\sigma_2}=\set{(a,b)\in N \mid a\leq 0}$, so that the star $\Star{{\tau_1}}$ has underlying set equal to the whole of $N$. 
		\item In the Bergman fan of the $U_{3,4}$ matroid, the star at any of the one-dimensional rays is has underlying set equal to a product of $\Z$ together with the ``tropical line'', i.e. the fan with rays $(1,1), (-1,0),(0,-1)$ and a vertex at $(0,0)$.
	\end{enumerate}
	In both cases, these sets must then be cut up so as to form a rational polyhedral fan.
\end{example}

An \emph{integer weight function} on a rational polyhedral fan $\Sigma$ of dimension $d$ is a function $w \colon \Sigma^{d} \to \Z$. We are interested in weighted fans satisfying the usual tropical \emph{balancing condition}. This condition is equivalent to being a \emph{Minkowski weight} in the sense of \cite{FultonSturmfels}. For more on the balancing condition, see for instance \cite[Definition 3.3.1]{MaclaganSturmfels} or \cite[Definition 5.7]{BIMS14}.

\begin{definition}\label{def:balancing_Z}
	Let $\Sigma$ be a rational polyhedral fan of dimension $d$ with weights $w \colon \Sigma^{d} \to \Z$.
	We say that $\Sigma$ together with $w$ is \emph{balanced} at a face $\beta \in \Sigma^{d-1}$ if $$\sum_{\beta \prec \alpha} w(\alpha) v_{\alpha/\beta} \in L_\Z (\beta),$$
	using the notation from \cref{def:cone}.
	We say $\Sigma$ together with $w$ is \emph{balanced} if it is balanced along each face $\beta \in \Sigma^{d-1}$. 
\end{definition}
\begin{example}\label{ex:cross_balancing}
	Our previous examples of fans have all been balanced:
	\begin{enumerate}
		\item The fan of dimension one discussed in \cref{ex:cross} and shown in \cref{fig:cross} is balanced, for a given weight function $w \colon \Sigma^1 \to \Z$, if and only if $w(\tau_1)=w(\tau_3)$ and $w(\tau_2)= w(\tau_4)$. 
		\item The fan of dimension two in \cref{ex:complete} is also balanced if and only if the weight function $w \colon \Sigma^1 \to \Z$ is such that $w(\sigma_1)=w(\sigma_2)=w(\sigma_3)$.
		\item It follows from \cite[Proposition 2]{BergmanComplexPhyloTree}, that the stars $\Star{\gamma}$ of faces $\gamma$ in the Bergman fans of a matroid are themselves Bergman fans of matroids. It is shown in \cite[Proposition 5.2]{AdiprasitoHuhKatz} that, for the Bergman fan $\Sigma(M)$ of a matroid $M$, the only weight functions which satisfy the balancing condition are the constant ones. The uniqueness of such a weight function follows from tropical Poincaré duality in \cite[Proposition 5.5]{Lefschetz11} and the earlier \cite[Theorem 38]{HuhThesis}. By our later \cref{def:balanced}, this will mean that these fans are \emph{uniquely $\Z$-balanced}.
	\end{enumerate}
\end{example}

\subsection{Cellular sheaves and cellular cosheaves}\label{subsection:cellular_sheaves_cosheaves}
One can define \emph{cellular sheaves} and \emph{cellular cosheaves} of modules on a polyhedral fan:
\begin{definition}\label{definition:sheaf_cosheaf}
	Let $R$ be a commutative ring, $\Sigma$ a rational polyhedral fan. Then:
	\begin{itemize}
		\item A \emph{cellular $R$-sheaf} $\G$ is a functor $\G \colon \Sigma \to \Mod_R$.
		\item A \emph{cellular $R$-cosheaf} $\F$ is a functor $\F \colon \Sigma^\op \to \Mod_R$.
	\end{itemize}
	A morphism of sheaves or cosheaves is simply a natural transformation of functors or contravariant functors, respectively.
\end{definition}
\begin{remark}
	The category $\Sigma$, when viewed as a set, can be given the \emph{Alexandrov topology}, such that cellular sheaves and cosheaves in fact are sheaves and cosheaves with respect to this topology. For more on cellular sheaves and cosheaves, see \cite{Curry}.

	We have considered the fan $\Sigma$ as a category with morphisms $\tau \to \sigma$ whenever $\tau$ is a face of $\sigma$, so that a sheaf $\G$ induces a map $\G(\tau) \to \G(\sigma)$, and a cosheaf $\F$ induces a map $\F(\sigma) \to \F (\tau)$. This convention is in agreement with \cite{Curry,Shepard}, but reversed from \cite{Brion,Hower} in the sense that their \emph{sheaves} are our \emph{cosheaves}, and vice versa.
\end{remark}
\begin{example}\label{ex:constant_sheaf_cosheaf}
	Let $\Sigma$ be a rational polyhedral fan. For a module $M$ over a ring $R$, the \emph{constant cosheaf $M^\Sigma$ with values in $M$} is the cosheaf defined as a functor $M^\Sigma \colon  \Sigma^\op \to \mathbf{Mod}_R$ taking all objects to $M$ and all morphisms to $\id_M$.
	
	Similarly, the \emph{constant sheaf $M_\Sigma$ with values in $M$} is the sheaf defined as a functor $M_\Sigma \colon  \Sigma \to \mathbf{Mod}_R$ taking all objects to $M$ and all morphisms to $\id_M$.
\end{example}

\subsection{Cellular homology and cohomology}\label{subsection:cellular_homology_cohomology}
Considering the fan $\Sigma$ as a subset of $N_\R$, we select an orientation for each cone $\sigma \in \Sigma$. For each $\tau \prec \sigma$ such that $\dim (\tau) = \dim (\sigma)- 1$, we keep track of the relative orientations by writing $\mathcal{O}(\tau, \sigma)=1$ if the restriction of the orientation of $\sigma$ to $\tau$ coincides with the orientation of $\tau$, and $\mathcal{O}(\tau, \sigma)=-1$ if it reverses it. In the two next definitions, we use the orientation $\mathcal{O}(\tau,\sigma)=\pm 1$ to construct certain (co)chain complexes for a given (co)sheaf. These definitions are equal to the ones in \cite{Curry, Shepard,CellularSheaves}, and reversed from \cite{Brion, Hower}, who index by codimension.
\begin{definition}\label{def:cochain_group}
	Given a cellular sheaf $\G$, the \emph{cellular cochain groups} and \emph{cellular cochain groups with compact support} are defined, respectively, by
	\begin{equation*}
		C^q(\Sigma,\G) \coloneqq \bigoplus_{\substack{\sigma \in \Sigma^q \\ \sigma_\R \; \text{compact}}} \G(\sigma)
		\quad \text{and} \quad
		C_c^q(\Sigma,\G) \coloneqq \bigoplus_{\sigma \in \Sigma^q} \G(\sigma),
 	\end{equation*}
 	for $q\geq 0$, where $\sigma_\R$ is as in \cref{def:cone}. The cellular cochain maps
 	\begin{equation*}
 		d^q \colon C^q (\Sigma,\G) \to C^{q+1} (\Sigma,\G) 
		\quad \text{and} \quad
		d^q \colon C_c^q (\Sigma,\G) \to C_c^{q+1} (\Sigma,\G) 
 	\end{equation*}
 	are given component-wise for $\tau \in \Sigma^q$ and $\sigma \in \Sigma^{q+1}$ with $\tau \prec \sigma$ by $d_{\tau \sigma} \colon \G(\tau) \to \G(\sigma)$, where
 	\begin{equation*}
 		d_{\tau \sigma} \coloneqq \mathcal{O}(\tau, \sigma) \G(\tau \to \sigma).
 	\end{equation*}
	If $\tau \not \preceq \sigma$, we let the map $d_{\tau \sigma}$ be $0$.

	The cohomology groups $H^\bullet (\Sigma, \G)$ and $H_c^\bullet (\Sigma, \G)$ of these complexes are the \emph{cellular sheaf cohomology} and \emph{cellular sheaf cohomology with compact support} with respect to the sheaf $\G$.
\end{definition}

\begin{definition}\label{def:chain_group}
	Given a cellular cosheaf $\F$, the \emph{cellular chain group} and \emph{Borel--Moore cellular chain groups} are defined, respectively, by
	\begin{equation*}
		C_q(\Sigma,\F) \coloneqq \bigoplus_{\substack{\sigma \in \Sigma^q \\ \sigma_\R \; \text{compact}}} \F(\sigma)
		\quad \text{and} \quad
		C_q^{BM}(\Sigma,\F) \coloneqq \bigoplus_{\substack{\sigma \in \Sigma^q}} \F(\sigma),
 	\end{equation*} 
 	for $q \geq 0$, where $\sigma_\R$ is as in \cref{def:cone}. The cellular chain maps
 	\begin{equation*}
 		\partial_q \colon C_q (\Sigma,\F) \to C_{q-1} (\Sigma,\F) 
		\quad \text{and} \quad
		\partial_q \colon C_q^{BM} (\Sigma,\F) \to C_{q-1}^{BM} (\Sigma,\F) 
 	\end{equation*}
 	are given component-wise for $\sigma \in \Sigma^q$ and $\tau \in \Sigma^{q-1}$ by $\partial_{\sigma \tau} \colon \F(\sigma) \to \F(\tau)$, where
 	\begin{equation*}
 		\partial_{\sigma \tau} \coloneqq \mathcal{O}(\tau, \sigma) \F(\sigma \to \tau).
 	\end{equation*}
	If $\tau \not \preceq \sigma$, we let the map $\partial_{\sigma \tau} $ be $0$.

	The homology groups $H_\bullet (\Sigma,\F)$ and $H_\bullet^{BM} (\Sigma,\F)$ of these complexes are the \emph{cellular cosheaf homology} and \emph{cellular Borel--Moore cosheaf homology} with respect to $\F$.
\end{definition}
Proofs that the cellular (co)chain groups and maps defined above form (co)chain complexes can be found in \cite[Definitions 6.2.6-7]{Curry} and \cite[Theorem 1.1.3]{Shepard}.
\begin{remark}\label{remark:trivial_hom_cohom}
	The above definitions of cellular cohomology work in the more general setting of polyhedral complexes. Since we are working with pointed polyhedral fans, the unique compact cell is the vertex $v$. Then, for any sheaf $\G$ on a fan $\Sigma$, the cellular cochain groups $C^q(\Sigma,\G)$ are trivial for $q>0$, and therefore:
	\begin{align*}
		H^q (\Sigma, \G) = \begin{cases*}
			\G(v) & for $q=0$, \\
			0 & otherwise.
		\end{cases*}
	\end{align*}
	Similarly, for any cosheaf $\F$, the cellular chain groups $C_q(\Sigma,\F)$ are trivial for $q>0$, thus:
	\begin{align*}
		H_q (\Sigma, \F) = \begin{cases*}
			\F(v) & for $q=0$, \\
			0 & otherwise.
		\end{cases*}
	\end{align*}
\end{remark}
\begin{example}\label{ex:cross_Z_BM_homology}
	Consider the fan from \cref{ex:cross}, with orientations chosen so that $\mathcal{O}(v,\tau_i)=1$ for all $i$. The Borel--Moore homology of the constant cosheaf $\Z^\Sigma$ is the homology of the complex 
	\[\begin{tikzcd}
		0 & {\Z^4} & \Z & 0,
		\arrow[from=1-1, to=1-2]
		\arrow["\partial_1",from=1-2, to=1-3]
		\arrow[from=1-3, to=1-4]
	\end{tikzcd}\]
	where the matrix $\partial_1$ is indexed by the $\tau_i$ and given by
	$$\partial_1 = (\mathcal{O}(v,\tau_i) \id_\Z)_{\tau_i \in \Sigma^1}=(1\; , 1\; , 1\; , 1).$$
	The Borel--Moore homology becomes $H_1^{BM}(\Sigma,\Z^\Sigma)=\Z^3$ and $H_0^{BM}(\Sigma,\Z^\Sigma)=0$.
\end{example}
    \section{Tropical geometry of fans}\label{section:balancing}
In this section, we introduce particular cellular (co)sheaves on fans which are of interest in tropical geometry. After examining some properties of the resulting \emph{tropical (co)homology}, we use this to define the \emph{balancing} condition in tropical geometry. Finally, we define the \emph{tropical cap product} associated to a balancing of the fan. We then introduce particular sheaves of interest in tropical geometry. Next, we generalize the balancing condition on fans to weights in arbitrary rings, which finally leads to a treatment of tropical Poincaré duality over arbitrary commutative rings.

\subsection{Tropical sheaves and cosheaves}
For tropical (co)homology, the following sheaves and cosheaves are of interest.
\begin{definition}[{\cite[Definition 13]{IKMZ}}] \label{def:Fp_sheaves}
	Let $\Sigma$ be a fan of dimension $d$ in $N$. For $\sigma \in \Sigma$, let $L_\Z(\sigma)$ be the lattice of integer points parallel to the cone $\sigma$. For $p=1,\dots,d$, the \emph{$p$-th multi-tangent cosheaf} $\F_p^\Z$ is the cellular $\Z$-cosheaf defined by the data: 
	\begin{itemize}
		\item For $\sigma\in \Sigma$, one has $\F_p^\Z(\sigma) \coloneqq \sum_{\sigma\preceq \gamma} \bigwedge^p L_\Z (\gamma) \subseteq \bigwedge^p N$.
		\item For $\tau \preceq \sigma$, the morphism $(\sigma \to \tau)\in \Hom_{\Sigma^\op}(\sigma, \tau)$ becomes the map $\iota_{\sigma,\tau}\colon \F_p^\Z(\sigma) \to \F_p^\Z(\tau)$, which is induced by the natural inclusion.
	\end{itemize}
	In the $p=0$ case, we define $\F_0^\Z=\Z^\Sigma$, with all maps being the identity.

	Furthermore, the cellular cosheaf $\F_p^\Z$ also gives rise to a cellular sheaf
	$\F_\Z^p$ which is defined by $\F_\Z^p (\sigma) \coloneqq \F_p^\Z (\sigma)^*$, with morphisms $\rho_{\tau,\sigma}\colon \F_\Z^p(\tau) \to \F_\Z^p(\sigma)$ defined by dualizing $\iota_{\sigma,\tau}\colon \F_p^\Z(\sigma) \to \F_p^\Z(\tau)$. 

	Finally, following \cite{Lefschetz11}, for any ring $R$, we define a cosheaf $\F_p^R$ by taking the tensor product $\F_p^R(\sigma)=\F_p^\Z(\sigma) \otimes_\Z R$, giving an $R$-module, and tensoring the maps as well. 
    Dualizing yields a sheaf $\F_R^p$.
\end{definition}
\begin{example}
	We compute some values of these cosheaves:
	\begin{enumerate}
		\item For \cref{ex:cross}, taking the ray $\tau_1$, we have that $\F_1^\Z(\tau) = L_\Z(\tau) = \langle (1,0) \rangle_\Z \subset \Z^2$. For the central vertex $v$, we have
		\begin{align*}
			\F_1^\Z (v) &= \sum_{i=1}^4 L_\Z(\tau_i) \\ &= \langle (1,0) \rangle_\Z + \langle (0,1) \rangle_\Z +  \langle (-1,0) \rangle_\Z +  \langle (0,-1) \rangle_\Z  = \Z^2.
		\end{align*}
		The cosheaf $\F_0^\Z$ is merely the constant cosheaf taking value $\Z$, so that $\F_0^\Z(\tau_i)=\Z$ and $\F_0^\Z(v)=\Z$.
		\item For \cref{ex:complete}, we have that $\F_2^\Z(\sigma_1)= \bigwedge^2 L_\Z(\sigma_2) = \langle (1,0)\wedge (0,1) \rangle_\Z \cong \Z$.
	\end{enumerate}
	
\end{example}

\begin{remark}\label{remark:change_coefficients}
	For any $\Z$-module $M$ and commutative ring $R$, the product $M_R \coloneqq M \otimes_\Z R$ is an $R$-module. Moreover, by \cite[Proposition III.7.5.8]{Bourbaki}, we have $\bigwedge^p M_R\cong (\bigwedge^p M) \otimes_\Z R$. In particular, for $\sigma \in \Sigma$ a maximal face of a $d$-dimensional fan, $L_R(\sigma)\coloneqq L_\Z(\sigma) \otimes_\Z R$ is a free $R$-module of dimension $d$, and $\F_R^p(\sigma)= (\bigwedge^p L_\Z(\sigma))\otimes_\Z R \cong \bigwedge^p L_R(\sigma)$.
\end{remark}
\begin{remark}\label{remark:Lambda_generator}
	Let $\Sigma$ be a fan of dimension $d$. For any $\alpha \in \Sigma^d$, by \cref{remark:change_coefficients} we have $\F_d^R (\alpha) = \bigwedge^d L_R (\alpha) \cong R$. Given a choice of orientation for $\alpha$, we can select the unique generator $\Lambda_\alpha \in \F_d^\Z (\alpha)= \bigwedge^p L_\Z(\sigma)$ compatible with the chosen orientation, and abusing notation, we let $\Lambda_\alpha \in \F_d^R (\alpha)\cong (\bigwedge^p L_\Z(\sigma))\otimes_\Z R $ denote the corresponding element $\Lambda_\alpha\otimes 1_R \in \F_d^R (\alpha)$.
\end{remark}
\begin{example}\label{ex:complete_lambda}
	In \cref{ex:complete}, suppose we choose orientations such that all the one-dimensional rays point outward, with all the two-dimensional cones being oriented clockwise. Choose the standard basis $e_1,e_2$ for the ambient lattice $N$. We then have $\Lambda_{\sigma_1}=e_1\wedge e_2$, $\Lambda_{\sigma_2}=e_1 \wedge e_2$ and $\Lambda_{\sigma_3} = e_1\wedge e_2$.
\end{example}
\begin{definition}\label{def:tropical_homology_cohomology}
    The cochain complex $(C^\bullet(\Sigma,\F_R^p),\delta)$ from \cref{def:cochain_group} has cohomology groups $H^q(\Sigma,\F_R^p)$ which are called the (cellular) \emph{tropical cohomology groups with $R$-coefficients} of $\Sigma$.
    Moreover, the cohomology groups $H_c^q(\Sigma,\F_R^p)$ of the complex $(C_c^\bullet(\Sigma,\F_R^p),\delta)$ are called the (cellular) \emph{compact support tropical cohomology groups with $R$-coefficients} of $\Sigma$.

	Similarly, the chain complex $(C_\bullet(\Sigma,\F_p^R),\partial)$ from \cref{def:chain_group} has homology groups $H_q(\Sigma,\F_p^R)$ which are called the (cellular) \emph{tropical homology groups with $R$-coefficients} of $\Sigma$.
	Finally, the (cellular) \emph{tropical Borel--Moore homology groups with $R$-coefficients} $H_q^{BM}(\Sigma,\F_p^R)$ are the homology groups of the chain complex $(C_\bullet^{BM}(\Sigma,\F_p^R),\partial)$.
\end{definition}

\begin{proposition}\label{prop:fp_cohom_trivial}
	The tropical cohomology with $R$-coefficients of any fan $\Sigma$ is
	\begin{align*}
		H^q (\Sigma, \F_R^p) = \begin{cases*}
			\F_R^p (v) & for $q=0$, \\
			0 & otherwise,
		\end{cases*}
	\end{align*}
	where $v\in \Sigma$ is the vertex.
\end{proposition}
\begin{proof}
	This follows from \cref{remark:trivial_hom_cohom}.
\end{proof}
\begin{example}\label{ex:cross_homology}
	Consider again the 1-dimensional fan from \cref{ex:cross}. Since it is of dimension $1$, the only $\F_R^p$ sheaves are $\F_R^0 \cong R_\Sigma$ and $\F_R^1$. By \cref{prop:fp_cohom_trivial}, the only non-zero cohomology groups are $H^0(\Sigma,\F_R^0)=R$ and $H^0(\Sigma,\F_R^1)=\F_R^1(v)=R^2$.

	Similarly, the only $\F_p^R$ cosheaves are $\F_0^R \cong R^\Sigma$ and $\F_1^R$. The computation of the homology with the constant cosheaf $\Z^\Sigma$ given in \cref{ex:cross_Z_BM_homology} carries through to $R^\Sigma$, giving $H_1^{BM}(\Sigma,\F_0^R)=R^3$ and $H_0^{BM}(\Sigma,\F_0^R)=0$. Finally, to compute the Borel--Moore homology for $\F_1^R$, we have the chain complex
	\begin{equation*}
		\begin{tikzcd}
			0 \arrow[r] &\oplus_{\tau_i \in \Sigma^1} \F_1^R(\tau_i) \arrow[r, "\partial_1"] & \F_1^R(v) \arrow[r] &0.
		\end{tikzcd}
	\end{equation*}
	Selecting the $\Z$-basis $e_1=(1,0)$, $e_2=(0,1)$ for $N$, we can write this complex as
	\begin{equation*}
		\begin{tikzcd}
			0 \arrow[r] &\langle(1,0)\rangle\oplus\langle(0,1)\rangle\oplus\langle(-1,0)\rangle\oplus\langle(0,-1)\rangle \arrow[r, "\partial_1"] &  \langle(1,0), (0,1)\rangle \arrow[r] &0,
		\end{tikzcd}
	\end{equation*}
	where $\partial_1$ is now the direct sum of the inclusion maps, and everything is suitably tensored with $R$. The Borel--Moore homology can then be shown to be given by 
	$H_0^{BM}(\Sigma,\F_1^R)=0$ and $H_1^{BM}(\Sigma,\F_1^R)=\langle (a,b,a,b)\mid a,b\in R \rangle \cong R^2$.
\end{example}
\begin{example}
	We now show how to perform the above computations using the \cite{CellularSheaves} package for \cite{polymake}, when working with rational coefficients. A code example is given in \cref{fig:cross_polymake}. To compute with \cite{polymake}, one specifies the rays of a fan, as well as which rays form a cone. The fan must be input in projective coordinates, so that there is a distinct projection point $[1,0,0]$, with all rays expressed using an embedding of $N$ into the hyperplane $H = \set{(x_0,x_1,x_2) \mid x_0=0}$. Thus the ray $\tau_1$ is $[0,1,0]$. Similarly the cones must all be given as including the projection point $[1,0,0]$, so that the one-dimensional ray $\tau_2$ is given as $[0,2]$.
	\begin{figure}[H]
		\centering
		\lstinputlisting[language=Perl]{figures/cross.pl}
		\caption{Code in \cite{polymake} to compute the tropical cohomology and Borel--Moore homology in \cref{ex:cross}}
		\label{fig:cross_polymake}
	\end{figure}
	
	Note also that one may use the command \lstinline{$complex -> VISUAL;} to receive a visualisation for two- and three-dimensional fans. The output of the code in \cref{fig:cross_polymake} is shown in \cref{fig:cross_polymake_output}:
	\begin{figure}[H]
		\centering
		\lstinputlisting{figures/cross_output.pl}
		\caption{Output from \cref{fig:cross_polymake}}
		\label{fig:cross_polymake_output}
	\end{figure}
\end{example}

Recall from \cref{def:stars_cones} that one must subdivide the stars $\Star{\gamma}$ of faces $\gamma\in \Sigma$ to obtain a fan structure. The next proposition shows that the tropical cohomology of $\Star{\gamma}$ is determined directly by $\F_p^R (\gamma)$, and that the tropical Borel--Moore homology can be computed using a simpler complex than the one coming from the subdivision.
\begin{proposition}\label{prop:star_homology_cohomology}
	Let $\Sigma$ be a fan and $\gamma \in \Sigma$ a face of dimension $r$. Let $\F_{R,\Sigma}^p$ and $\F_{R,\gamma}^p$ denote the $p$-th multi-tangent sheaves on $\Sigma$ and $\Star{\gamma}$ respectively. Then 
	$$H^0 (\Star{\gamma}, \F_{R,\gamma}^p)\cong \F_{R,\Sigma}^p (\gamma).$$ 
	Similarly, let $\F_p^{R,\Sigma}$ and $\F_p^{R,\gamma}$ denote the $p$-th multi-tangent cosheaves on $\Sigma$ and $\Star{\gamma}$ respectively.
	Then the Borel--Moore homology $H_q^{BM}(\Star{\gamma}, \F_p^{R,\gamma})$ is isomorphic to the homology of the complex
	% https://q.uiver.app/?q=WzAsNixbMCwwLCIwIl0sWzIsMCwiXFxjZG90cyJdLFszLDAsIlxcYmlnb3BsdXNcXGxpbWl0c197XFxzdWJzdGFja3tcXGthcHBhIFxcaW4gXFxTaWdtYV57cisxfSBcXFxcIFxca2FwcGEgXFxzdWNjIFxcZ2FtbWF9fSBcXEZfcF57UixcXFNpZ21hfSAoXFxrYXBwYSkiXSxbNCwwLCJcXEZfcF57UixcXFNpZ21hfShcXGdhbW1hKSJdLFs1LDAsIjAiXSxbMSwwLCJcXGJpZ29wbHVzXFxsaW1pdHNfe1xcc3Vic3RhY2t7XFxhbHBoYSBcXGluIFxcU2lnbWFeZCBcXFxcIFxcYWxwaGEgXFxzdWNjIFxcZ2FtbWF9fSBcXEZfcF57UixcXFNpZ21hfSAoXFxhbHBoYSkiXSxbMSwyXSxbMiwzLCJcXHBhcnRpYWxfcXxfe1xca2FwcGEgXFxzdWNjIFxcZ2FtbWF9Il0sWzMsNF0sWzAsNV0sWzUsMSwiXFxwYXJ0aWFsX3F8X3tcXGFscGhhIFxcc3VjYyBcXGdhbW1hfSJdXQ==
	\[\begin{tikzcd}
		0 & {\bigoplus\limits_{\substack{\alpha \in \Sigma^d \\ \alpha \succ \gamma}} \F_p^{R,\Sigma} (\alpha)} & \cdots & {\bigoplus\limits_{\substack{\kappa \in \Sigma^{r+1} \\ \kappa \succ \gamma}} \F_p^{R,\Sigma} (\kappa)} & {\F_p^{R,\Sigma}(\gamma)} & 0,
		\arrow[from=1-3, to=1-4]
		\arrow["{\partial_q|_{\kappa \succ \gamma}}", from=1-4, to=1-5]
		\arrow[from=1-5, to=1-6]
		\arrow[from=1-1, to=1-2]
		\arrow["{\partial_q|_{\alpha \succ \gamma}}", from=1-2, to=1-3]
	\end{tikzcd}\]
	where we define ${\partial_{q}^\gamma}=\oplus \partial_{\sigma \tau}$ with the sum taken over all $\sigma, \tau \succeq \gamma$, $\sigma \in \Sigma^{q}$ and $\tau \in \Sigma^{q-1}$. 
\end{proposition}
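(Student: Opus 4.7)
My plan is to reduce both assertions to Proposition~\ref{proposition:fp_cohom_trivial} via a combinatorial identification between the cells of $\Star{\gamma}$ and the faces of $\Sigma$ containing $\gamma$. The map $\kappa \mapsto \widetilde{\kappa}$ from Definition~\ref{def:stars_cones} provides a bijection between $\{\kappa \in \Sigma \mid \kappa \succeq \gamma\}$ and the faces of $\Star{\gamma}$, shifting dimensions by $r = \dim(\gamma)$. Since $\gamma \preceq \kappa$, one has $L_\Z(\widetilde{\kappa}) = L_\Z(\kappa)$, so Definition~\ref{def:Fp_sheaves} together with Remark~\ref{remark:change_coefficients} yields $\F_p^{R,\gamma}(\widetilde{\kappa}) = \F_p^{R,\Sigma}(\kappa)$ and dually $\F_{R,\gamma}^p(\widetilde{\kappa}) = \F_{R,\Sigma}^p(\kappa)$.

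For the cohomology statement, $\Star{\gamma}$ is a pointed fan whose unique vertex $v_\gamma$ corresponds to $\widetilde{\gamma}$. Proposition~\ref{proposition:fp_cohom_trivial} then immediately gives $H^q(\Star{\gamma}, \F_{R,\gamma}^p) = 0$ for $q > 0$ and $H^0(\Star{\gamma}, \F_{R,\gamma}^p) = \F_{R,\gamma}^p(v_\gamma)$. The cells of $\Star{\gamma}$ containing $v_\gamma$ run precisely over all $\kappa \succeq \gamma$, so the identifications above give $\F_p^{R,\gamma}(v_\gamma) = \sum_{\gamma \preceq \kappa} \bigwedge^p L_R(\kappa) = \F_p^{R,\Sigma}(\gamma)$, which dualizes to the claimed isomorphism.

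For the Borel--Moore statement, the bijection sends a cell of $\Star{\gamma}$ of dimension $q - r$ to a face $\kappa \in \Sigma^q$ with $\kappa \succeq \gamma$. After choosing orientations on each $\widetilde{\kappa}$ compatible with the orientation of $\kappa$ (for instance, via a splitting $L_\R(\kappa) \cong L_\R(\gamma) \oplus L_\R(\kappa)/L_\R(\gamma)$), the sign function $\mathcal{O}$ on $\Star{\gamma}$ agrees with the restriction of $\mathcal{O}$ on $\Sigma$ to pairs of faces that both contain $\gamma$. Consequently, the cellular Borel--Moore chain complex $C_\bullet^{BM}(\Star{\gamma}, \F_p^{R,\gamma})$ coincides, after the index shift by $r$, with the displayed complex, and its differentials are exactly the restrictions $\partial_q|_{\kappa \succeq \gamma}$. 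Taking homology yields the claim.

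The main subtlety is the orientation compatibility just invoked: to match the differentials of $C_\bullet^{BM}(\Star{\gamma})$ term-by-term with restrictions of $\partial_q$ from $\Sigma$, one must make coherent orientation choices across both fans and verify that the induced boundary maps agree. Once this bookkeeping is settled, both parts follow formally from the definitions and Proposition~\ref{proposition:fp_cohom_trivial}.
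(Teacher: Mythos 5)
There is a genuine gap, and it sits exactly where the real content of the proposition lies. Your argument rests on the claim that $\kappa \mapsto \widetilde{\kappa}$ is a bijection between $\{\kappa \in \Sigma \mid \kappa \succeq \gamma\}$ and the faces of the fan $\Star{\gamma}$, so that the cellular Borel--Moore complex of $\Star{\gamma}$ coincides term-by-term (up to signs) with the displayed complex. But for $r \geq 1$ the sets $\widetilde{\kappa}$ are not faces of any fan structure on $\Star{\gamma}$: already $\widetilde{\gamma}$ is the $r$-dimensional linear span of $\gamma$, so every $\widetilde{\kappa}$ contains a positive-dimensional linear space and is not strictly convex, which is precisely why \cref{def:stars_cones} requires the support $\cup_{\kappa \succeq \gamma}\widetilde{\kappa}$ to be \emph{subdivided} into pointed cones. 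The tropical (co)homology of $\Star{\gamma}$ in \cref{def:tropical_homology_cohomology} is computed from that subdivided fan structure, whose cells are strictly finer (and more numerous) than the $\widetilde{\kappa}$, so no orientation bookkeeping can match the two chain complexes term by term --- they do not even have the same underlying modules. The substance of the proposition is an invariance statement: the answer computed from the subdivision agrees with the answer computed from the coarse, non-fan decomposition $\{\widetilde{\kappa}\}$. The paper bridges this by identifying Borel--Moore homology with the homology, with local coefficients induced by $\F_p^R$, of the one-point compactification $|\Star{\gamma}| \cup \{\infty\}$, which is independent of the chosen CW structure; your proposal contains no substitute for this step, and your ``main subtlety'' paragraph (orientations) does not address it.

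A secondary but related problem is the dimension shift by $r$ and the assertion that the vertex of $\Star{\gamma}$ ``corresponds to $\widetilde{\gamma}$''. With the paper's definition the star lives in the ambient $N$, so $\dim \widetilde{\kappa} = \dim \kappa$, there is no shift, and the term indexed by $\Sigma^q$ must sit in homological degree $q$ (this is how the proposition is used, e.g.\ in \cref{prop:stars_are_balanced}, where $H_d^{BM}$ is the kernel at the $\Sigma^d$-term). The shift you describe belongs to the quotient model of the star in $N/L_\Z(\gamma)$, but in that model the identification $L_\Z(\widetilde{\kappa}) = L_\Z(\kappa)$ you also invoke is false, so the two halves of your setup are incompatible. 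The cohomology half of your argument is essentially salvageable --- with the subdivided structure the unique compact cell is the new vertex, and its $\F$-value equals $\F_{R,\Sigma}^p(\gamma)$ because the maximal cones of the subdivision have the same lattices as the $\widetilde{\alpha}$, which is the paper's argument --- but as written it, too, is phrased through the non-existent bijection, and the Borel--Moore half needs the change-of-cell-structure argument that is missing.
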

\begin{proof}
	First, by \cref{def:stars_cones}, we must choose a subdivision of the space with support given by the cones $\widetilde{\kappa}=\set{t (x-y) \mid t \geq 0, x \in \kappa, y \in \gamma}$
	for each $\kappa \succeq \gamma$, and we will have only one compact cell given by the created vertex $\widetilde{v} \in \Star{\gamma}$.
	By \cref{remark:trivial_hom_cohom}, we have $H^0 (\Star{\gamma}, \F_{R,\gamma}^p)= \F_{R,\gamma}^p(\widetilde{v})$. Then, observe that for each $\kappa \succ \gamma$, the lattice is unchanged in the sense that $L_\Z(\widetilde{\kappa})= L_\Z(\kappa)$ as subspaces of $N$, and each maximal dimensional face $\alpha$ in the subdivision of $\Star{\gamma}$ is a subspace of a $\widetilde{\kappa}$. In particular, this implies that $\F_{R,\gamma}^p (\widetilde{v})\cong \F_{R,\Sigma}^p (\gamma)$.

	Next, observe that the Borel--Moore homology is the equal to the regular homology of the fan when seen as a subset of the one-point compactification $N\cup \set{\infty}$ of the ambient lattice $N$. Then every cone $\sigma \in \Star{\gamma}$ becomes a disk $\sigma_\infty$ in $N \cup \set{\infty}$, and we have a CW complex structure on the underlying set of $\Star{\gamma} \cup \set{\infty}$.
	Then, similarly to \cite[Section 2.2]{MZ14} and \cite[Remark 2.8]{Lefschetz11}, note that:
	$$C_q^{BM}(\Star{\gamma}, \F_p^R)=\oplus_{\sigma \in \Star{\gamma}^q} \F_p^R (\sigma)= \oplus_{\sigma \in \Star{\gamma}^q} H_q (\sigma_\infty, \partial (\sigma_\infty), \F_p^R(\sigma)),$$
	where the right hand side becomes the cellular homology with coefficients in the local system induced by $\F_p^R$ of the CW complex $\Star{\gamma}\cup \set{\infty}$. This can be computed with an arbitrary CW structure. Therefore, the given complex, which is the local system homology of the CW structure induced on $|\Star{\gamma} \cup \set{\infty}|$ by taking the non-subdivided cell structure of $\Star{\gamma}$ will compute the Borel--Moore homology of the $\F_p^R$ cosheaf.
\end{proof}
The above proposition shows that, when working with stars of faces in a fan, the particular cellular structure does not change the tropical (co)homology. This is not the case for general (co)sheaves, see for instance the \emph{wave tangent sheaves} defined in \cite[Section 3]{MZ14}. 

\subsection{Balancing in tropical geometry}

It was observed in \cite[Remark 4.9]{Lefschetz11} and \cite[Proposition 4.3]{MZ14} that the balancing condition from \cref{def:balancing_Z} can be equivalently formulated as the condition that a particular tropical Borel--Moore chain is closed. In this subsection, we use this observation to form a generalization of the balancing condition to arbitrary commutative rings.
\begin{definition}\label{def:weights_R}
	Let $R$ be a ring. An \emph{$R$-weight function} $w \colon \Sigma^d \to R$ on a $d$-dimensional fan $\Sigma$ is a function such that for all $\alpha \in \Sigma^d$, the weight $w(\alpha)$ is not a zero-divisor. A pair $(\Sigma,w)$ will be called an \emph{$R$-weighted fan}.
\end{definition}
\begin{definition}\label{def:balanced}
	An $R$-weighted fan $(\Sigma, w)$ is \emph{$R$-balanced} if \emph{the fundamental chain $\mathrm{Ch}(\Sigma,w)$} given by
	$$\mathrm{Ch}(\Sigma,w)\coloneqq(w(\alpha)\Lambda_\alpha)_{\alpha \in \Sigma^d} \in C_{d}^{BM}(\Sigma, \F_d)$$
	is a cycle, where the $\Lambda_\alpha$ are chosen as in \cref{remark:Lambda_generator}. In this case, we have $H_d^{BM}(\Sigma, \F_d^R) \not = 0$, together with an induced \emph{fundamental class} 
	$$[\Sigma,w] = [\mathrm{Ch}(\Sigma,w)] \in H_d^{BM}(\Sigma, \F_d^R).$$
	If $H_d^{BM}(\Sigma, \F_d^R) = \langle [\Sigma,w] \rangle \cong R$, we say that $\Sigma$ is \emph{uniquely $R$-balanced} by $w$.
\end{definition}
\begin{example}
	We now compute the fundamental chain in the fan $\Sigma$ of \cref{ex:complete}. Choose orientations such that the elements $\Lambda_{\sigma_i}$ are as in \cref{ex:complete_lambda}. Moreover, we choose a weight function assigning the value $1$ to each of the cones $\sigma_i$. Then the fundamental chain is:
	$$(\Lambda_{\sigma_i})_{i=1}^3 = (e_1 \wedge e_2, e_1 \wedge e_2,e_1 \wedge e_2) \in C_2^{BM}(\Sigma,\F_2^\Z).$$
	It is then straightforward to check that, under the boundary map $\partial_2$, taking into account orientations, this chain is mapped to zero. For instance, for the component of $C_1^{BM}(\Sigma,\F_2^\Z)$ corresponding to $\tau_1$, we have
	$$- e_1 \wedge e_2 + e_1 \wedge e_2 = 0 \in \F_2^\Z (\tau_1),$$
	with the first $2$-wedge corresponding to $\sigma_1$ and the second to $\sigma_2$.
	Thus the fan is $\Z$-balanced, and there is a fundamental class $[\Sigma,w]\in H_2^{BM}(\Sigma,\F_2^R)$. Moreover, it can be checked that this class generated the whole cohomology module, so that this fan is in fact uniquely $R$-balanced.
\end{example}
The above definition, which is equivalent to the usual balancing condition \cite[Definition 5.8]{BIMS14}, is similar in flavor to the description given by \cite[Theorem 2.9]{BabaeeHuh}. We illustrate this with the following example.
\begin{example}\label{ex:complete_fund_class}
	In this example, we explicitly relate the above definition of balancing to the one given in \cref{def:balancing_Z}. Let $(\Sigma,w)$ be a $\Z$-balanced fan of dimension $d$ in the sense of \cref{def:balanced}. Then, for each $\beta\in \Sigma^{d-1}$, we pick a generator $\Lambda_\beta \in L_\Z (\beta)$ respecting the orientation. Now for each $\alpha\in \Sigma^d$ with $\beta \prec \alpha$, the vector $v_{\alpha/\beta}$ from \cref{def:cone} is such that $\Lambda_\alpha = \Lambda_\beta \wedge v_{\alpha/\beta}$.

	Looking at the $\beta$-component of $\partial\colon C_d^{BM}(\Sigma, \F_d^\Z) \to  C_{d-1}^{BM}(\Sigma, \F_d^\Z)$, we have
	$$\partial((w(\alpha)\Lambda_\alpha))_\beta = \sum_{\beta \prec \alpha} w(\alpha)\Lambda_\alpha= \Lambda_\beta \wedge \sum_{\beta \prec \alpha} w(\alpha)v_{\alpha/\beta}.$$
	Therefore, the chain $\mathrm{Ch}(\Sigma,w)$ is closed if and only if each of the faces $\beta$ are balanced in the sense of \cref{def:balancing_Z}. Thus the definitions are equivalent.
\end{example}
\begin{proposition}\label{prop:stars_are_balanced}
	Let $(\Sigma,w)$ be an $R$-balanced fan and $\gamma\in \Sigma$ a face. Then $(\Star{\gamma},w)$ is $R$-balanced, where $w$ is understood to be the weight function induced on $\Star{\gamma}$ by $w$.
\end{proposition}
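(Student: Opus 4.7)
The plan is to reduce the $R$-balancing of $(\Star{\gamma},w)$ to that of $(\Sigma,w)$ by computing the boundary of the fundamental chain of $\Star{\gamma}$ in the simplified complex from \cref{prop:star_homology_cohomology}. The induced weight on a maximal cone of $\Star{\gamma}$ coming from some $\widetilde{\alpha}$ with $\alpha\in\Sigma^d$, $\alpha\succeq\gamma$, is $w(\alpha)$, which remains a non-zero-divisor. First I would invoke \cref{prop:star_homology_cohomology} to identify the top-dimensional Borel--Moore chain group of $\Star{\gamma}$ with $\bigoplus_{\alpha\succeq\gamma,\,\alpha\in\Sigma^d}\F_d^R(\alpha)$ and the differential with the restriction of the $\Sigma$-boundary $\partial_d$ to faces containing $\gamma$. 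Using the equalities $L_\Z(\widetilde{\alpha})=L_\Z(\alpha)$ established in the proof of that proposition, together with a consistent choice of orientations, the fundamental chain of $(\Star{\gamma},w)$ is represented in this complex by
$$\mathrm{Ch}(\Star{\gamma},w) \;=\; (w(\alpha)\,\Lambda_\alpha)_{\alpha\succeq\gamma,\,\alpha\in\Sigma^d}.$$

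Next I would compute $\partial_d^\gamma\,\mathrm{Ch}(\Star{\gamma},w)$ component-wise. For each $\beta\in\Sigma^{d-1}$ with $\beta\succeq\gamma$, the $\beta$-component equals
$$\sum_{\substack{\alpha\succ\beta\\ \alpha\in\Sigma^d}} \mathcal{O}(\beta,\alpha)\,w(\alpha)\,\Lambda_\alpha \;\in\; \F_d^R(\beta),$$
using the key observation that any $\alpha\in\Sigma^d$ with $\alpha\succ\beta$ automatically satisfies $\alpha\succeq\gamma$, since $\beta\succeq\gamma$. But this is precisely the $\beta$-component of $\partial_d\,\mathrm{Ch}(\Sigma,w)$, which vanishes by the $R$-balancing hypothesis on $(\Sigma,w)$. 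Hence $\mathrm{Ch}(\Star{\gamma},w)$ is a cycle in the simplified complex, and $(\Star{\gamma},w)$ is $R$-balanced.

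I expect the main obstacle to be the bookkeeping between the subdivided fan structure on $\Star{\gamma}$---required by \cref{def:polyhedral_fan} and \cref{def:balanced}---and the non-subdivided complex of \cref{prop:star_homology_cohomology} in which the calculation above is most transparent. One must verify that the weight $w(\alpha)$ is consistently inherited by all subdivided pieces of each $\widetilde{\alpha}$, that orientations can be chosen so that each generator $\Lambda_{\widetilde{\alpha}}$ agrees with $\Lambda_\alpha$ under the identification $L_\Z(\widetilde{\alpha})=L_\Z(\alpha)$, and that at any ``new'' codimension-one face interior to some $\widetilde{\alpha}$ the two adjacent top-dimensional pieces contribute with opposite orientations so that their boundary contributions cancel. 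Once this reconciliation is carried out, the $R$-balancing of $\Star{\gamma}$ reduces exactly to the $R$-balancing of $\Sigma$ at the codimension-one faces containing $\gamma$.
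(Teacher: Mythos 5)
Your proposal is correct and follows essentially the same route as the paper: both identify the top Borel--Moore chain group of $\Star{\gamma}$ with $\bigoplus_{\alpha\in\Sigma^d,\,\alpha\succ\gamma}\F_d^R(\alpha)$ via \cref{prop:star_homology_cohomology} and conclude that $(w(\alpha)\Lambda_\alpha)_{\alpha\succ\gamma}$ is a cycle because its boundary components agree with those of $\mathrm{Ch}(\Sigma,w)$. You merely spell out the componentwise verification (using that $\alpha\succ\beta\succeq\gamma$ forces $\alpha\succeq\gamma$) and the subdivision bookkeeping that the paper leaves implicit in its appeal to \cref{prop:star_homology_cohomology}.
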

\begin{proof}
	By \cref{prop:star_homology_cohomology}, we have that $H_d^{BM}(\Star{\gamma}, \F_d^R)$ can be viewed as the kernel of the map
	$${\bigoplus_{\substack{\alpha \in \Sigma^d \\ \alpha \succ \gamma}} \F_d^R (\alpha)} \xrightarrow{\partial_d^\gamma} {\bigoplus_{\substack{\beta \in \Sigma^{d-1} \\ \beta \succ \gamma}} \F_d^R (\beta)}.$$
	Moreover, the class
	$$\mathrm{Ch}(\Star{\gamma}, w) = (w(\alpha) \Lambda_\alpha)_{\substack{\alpha \in \Sigma^d \\ \alpha \succ \gamma}}$$
	is a cycle since $\mathrm{Ch}(\Sigma,w)$ is a cycle in $C_d^{BM}(\Sigma,\F_d^R)$. Thus $(\Star{\gamma},w)$ is $R$-balanced and we have a fundamental class $[\Star{\gamma},w] \in H_d^{BM}(\Star{\gamma},\F_d^R)$.
\end{proof}

\subsection{Tropical cap product}
There is a cap product relating $H^q(\Sigma,\F_R^p)$ to $H_{d-q}^{BM}(\Sigma,\F_{d-p}^R)$, which will be at the core of tropical Poincaré duality. We extend the version given in \cite[Definition 4.10]{Lefschetz11} for $R= \Z$ to arbitrary commutative rings $R$, using the contraction map from multilinear algebra for a general ring $R$, as developed in \cite[Section III.11]{Bourbaki}.
\begin{definition}[{\cite[Section III.11.9]{Bourbaki}}]
	Let $M_R$ be a rank $d$ free $R$-module, $M_R^*$ the dual module, and $0\leq p_1 \leq p_2 \leq d$. The \emph{interior product} or \emph{contraction} defined by $y=y_1 \wedge \dots \wedge y_{p_2} \in \bigwedge^{p_2} M_R$ is the map
	\begin{align*}
		\lrcorner \, y \colon \bigwedge^{p_1} M_R^* &\to \bigwedge^{p_2-p_1} M_R,
	\end{align*}
	which is defined on $x=x_1\wedge \dots \wedge x_p \in \bigwedge^{p_1} M_R^*$ to be
	\begin{equation*}
		x \, \lrcorner \, y = (-1)^{p_1(p_1-1)/2} \sum_a \mathrm{sign}(a) \left(\prod_{i=1}^{p_1} x_i (y_{a(i)}) \right) y_{a(p+1)} \wedge \dots \wedge y_{a(d)},
	\end{equation*}
	where the sum is taken over all permutations $a\in S_d$ which are increasing on $1,\dots,p$ and $p+1,\dots, d$, and is extended linearly.
\end{definition}
\begin{remark}\label{remark:contraction_basis_formula}
	In \cite[Section III.11.10]{Bourbaki}, an explicit formula for this contraction map is given in terms of bases.  Letting $e_1, \dots, e_m$ be a basis of $M_R$, the elements $e_I\coloneqq e_{i_1}\wedge \dots \wedge e_{i_{p_2}} \in \bigwedge^{p_2} M_R$, for all $I=\set{i_1<\dots < i_{p_2}}\subseteq [m]$ ordered strictly increasing subsets of size $p$, form a basis of $\bigwedge^{p_2} M_R$. Letting $f_1,\dots, f_m$ be the dual basis to the $ e_i$ for $M_R^*$, the elements $f_J$, for all $J=\set{j_1<\dots < j_{p_1}}\subseteq [m]$, form a basis of $\bigwedge^{p_1} M_R^*$. Then the contraction map defined by $e_J$ is given by 
    $$\begin{cases*}
        f_K \, \lrcorner \, e_J = 0 & if $K \not\subset J$, \\
        f_K \, \lrcorner \, e_J = (-1)^{v + p_1(p_1-1)/2} \; e_{J \setminus K} & if $K\subseteq J$ and $p_1 = |K|$,
    \end{cases*}$$
    where $v$ is the number of ordered pairs $(\lambda, \mu) \in K \times (J \setminus K)$ such that $\lambda > \mu$.
\end{remark}
A proof that these contraction maps are the same as the formulation in terms of compositions given in \cite{AminiPiquerezFans,mastersthesis} follows from the arguments given in \cite[Lemma 4.1.4]{mastersthesis}.
\begin{definition}\label{def:contraction_on_face}
	For each facet $\alpha\in \Sigma^d$ of a $d$-dimensional $R$-balanced fan $\Sigma$, we have chosen a generator $\Lambda_\alpha \in \F_d^R (\alpha)= \bigwedge^d L_R (\alpha) \cong R$ by \cref{remark:Lambda_generator}, and a weight $w(\alpha) \in R$ which is not a zero-divisor. The \emph{contraction} defined by $w(\alpha) \Lambda_\alpha$ is the map 
	$$\lrcorner \, w(\alpha) \Lambda_\alpha \colon \bigwedge^{p} L_R(\alpha)^* \to \bigwedge^{d-p}L_R(\alpha).$$
	Since $w(\alpha)$ is not a zero-divisor, \cref{remark:contraction_basis_formula} shows that this map is injective. It is an isomorphism if and only if $w(\alpha)$ is a unit.
\end{definition}
\begin{definition}\label{def:cap_product}
	Let the weighted fan $(\Sigma,w)$ be an $R$-balanced fan of dimension $d$. The \emph{cap product $\frown \mathrm{Ch}(\Sigma,w)$} with the fundamental chain of $\Sigma$ is the map given by:
	\begin{align*}
		\frown\mathrm{Ch}(\Sigma,w)\colon C^q(\Sigma,\F_R^p) &\to C_{d-q}^{BM}(\Sigma,\F_{d-p}^R) \\
						(u_\gamma)_{\gamma \in \Sigma^q} &\mapsto \left(\sum_{\substack{\alpha \in \Sigma^d \\ \gamma, \tau  \preceq \alpha}} w(\alpha) \iota_{\alpha,\tau} \left( \rho_{\gamma,\alpha} (u_\gamma) \, \lrcorner \, \Lambda_\alpha \right) \right)_{\tau \in \Sigma^{d-q}}
	\end{align*}
	where $\Lambda_\alpha$ is as in \cref{remark:Lambda_generator}.
\end{definition}
\begin{remark}
	For any chain $\sigma\in C_q^{BM}(\Sigma,\F_{p}^R)$, a cap product $\frown\sigma$ can be similarly defined. It is noted in \cite[p. 13]{Lefschetz11} that the Leibniz formula holds for these cap product, such that $\partial (\alpha \frown\sigma) = (-1)^{q+1} (d(\alpha)\frown\sigma - \alpha \frown\partial(\sigma))$. In the case where $R=\R$, the Leibniz formula also follows from \cite[Remark 2.2, Definition 4.11]{JellShawSmacka}. Therefore the above defined map descends to tropical (co)homology, and we have the \emph{cap product with the fundamental class $\frown[\Sigma,w]$}
	\begin{align*}
		\frown[\Sigma,w] \colon H^q(\Sigma,\F_R^p) &\to H_{d-q}^{BM}(\Sigma,\F_{d-p}^R).
	\end{align*}
\end{remark}
\begin{example}\label{ex:complete_cap_product}
	In \cref{ex:complete_fund_class}, we computed the fundamental class of the fan $\Sigma$ from \cref{ex:complete}, given the all-one weight function $w$. We will now compute some examples of the cap product. Let $e_1,e_2\in N \cong \Z^2$ be the standard basis for the underlying lattice, and $e_1^*,e_2^*$ the dual basis for the dual lattice $N^*$. Then
	\begin{align*}
		H^0(\Sigma,\F_\Z^0) &= \F_\Z^0(v) = \Z, \\
		H^0(\Sigma,\F_\Z^1) &= \F_\Z^1(v) = \langle e_1^*, e_2^* \rangle_\Z, \\
		H^0(\Sigma,\F_\Z^2) &= \F_\Z^2(v) = \langle e_1^* \wedge e_2^* \rangle_\Z.
	\end{align*}
	Moreover, all other cohomology groups are zero, by \cref{prop:fp_cohom_trivial}. Next, the Borel--Moore chain complexes are given by 
	\begin{align*}
		C_\bullet^{BM}(\Sigma, \F_0^\Z) &\colon \quad 0 \to \Z^3 \to \Z^3 \to \Z \to 0,  \\
		C_\bullet^{BM}(\Sigma, \F_1^\Z) &\colon \quad 0 \to \bigoplus_{i=1}^3 \F_1^\Z (\sigma_i) \to \bigoplus_{j=1}^3 \F_1^\Z (\tau_j) \to \F_1^\Z (v) \to 0, \\
		C_\bullet^{BM}(\Sigma, \F_2^\Z) &\colon \quad 0 \to \bigoplus_{i=1}^3 \F_2^\Z (\sigma_i) \to \bigoplus_{j=1}^3 \F_2^\Z (\tau_j) \to \F_2^\Z (v) \to 0.
	\end{align*}
	We now compute $H_2^{BM}(\Sigma,\F_1^\Z)$ as an example.
	We have that $\F_1^\Z (\tau_j) = \langle e_1, e_2 \rangle_\Z$ for each $j$, and $\F_1^\Z (\sigma_i) = \langle e_1, e_2 \rangle_\Z$ for each $i$. Taking the direct sum of these bases in $\bigoplus_{i=1}^3 \F_1^\Z (\sigma_i)$, and respecting the orientations, we may express the differential $\partial_2: \bigoplus_{i=1}^3 \F_1^\Z (\sigma_i) \to \bigoplus_{j=1}^3 \F_1^\Z (\tau_j)$ in coordinates as:
	\begin{equation*}
		(\alpha^1,\beta^1,\alpha^2,\beta^2,\alpha^3,\beta^3) 
		\mapsto 
		(-\alpha^1+\alpha^2,-\beta^1+\beta^2,\alpha^1-\alpha^3,\beta^1-\beta^3,-\alpha^2+\alpha^3,-\beta^2+\beta^3),
	\end{equation*}
	with $\alpha^i$ and $\beta^i$ corresponding respectively to $e_1$ and $e_2$ for $\sigma_i$. We compute a basis for the kernel of this map, i.e. a basis for $H_2^{BM}(\Sigma,\F_1^\Z)$ to be:
	$$H_2^{BM}(\Sigma,\F_1^\Z) = \langle (1,0,1,0,1,0), (0,1,0,1,0,1) \rangle_\Z \subset C_2^{BM}(\Sigma, \F_1^\Z)$$
	Similar computations show that the remaining Borel--Moore homology groups are given by:
	\begin{equation*}
		\begin{matrix}
			H_0^{BM}(\Sigma, \F_0^\Z)=0  & H_1^{BM}(\Sigma, \F_0^\Z)=0  & H_2^{BM}(\Sigma, \F_0^\Z) \cong \Z \\
			H_0^{BM}(\Sigma, \F_1^\Z)=0  & H_1^{BM}(\Sigma, \F_1^\Z)=0  & H_2^{BM}(\Sigma, \F_1^\Z) \cong \Z^2 \\
			H_0^{BM}(\Sigma, \F_1^\Z)=0  & H_1^{BM}(\Sigma, \F_1^\Z)=0  & H_2^{BM}(\Sigma, \F_2^\Z) = \langle [\Sigma,w] \rangle \cong \Z.
		\end{matrix}
	\end{equation*}
	Finally, we now compute an example for the cap product map, in particular $\frown [\Sigma,w] \colon H^0(\Sigma,\F_\Z^1) \to H_2^{BM}(\Sigma,\F_1^\Z)$. Working from the definition, we have that
	$$e_1^* \mapsto (e_1^* \lrcorner \Lambda_{\sigma_1},e_1^* \lrcorner \Lambda_{\sigma_2},e_1^* \lrcorner \Lambda_{\sigma_3})\in \bigoplus_{i=1}^3 \F_1^\Z (\sigma_i).$$
	Expanding this using the $(\alpha^i,\beta^i)$ basis from above, these contractions are such that $e_1^* \mapsto (1,0,1,0,1,0)$, and one can similarly check that $e_2^* \mapsto (0,1,0,1,0,1)$. This shows that $\frown [\Sigma,w]$ is in this case an isomorphim.
\end{example}

\begin{proposition}\label{prop:cap_zero_map}
	Let $(\Sigma,w)$ be an $R$-balanced fan of dimension $d$. The cap product with the fundamental class $\frown[\Sigma,w]$ in tropical cohomology
	\begin{align*}
		\frown[\Sigma,w] \colon H^q(\Sigma,\F_R^p) &\to H_{d-q}^{BM}(\Sigma,\F_{d-p}^R)
	\end{align*}
	is the $0$-map for $q\not = 0$.
\end{proposition}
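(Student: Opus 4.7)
The plan is to observe that this proposition is essentially a direct corollary of \cref{proposition:fp_cohom_trivial}. Since $\Sigma$ is a pointed rational polyhedral fan, its unique compact face is the vertex $v$, and therefore the cellular cochain groups $C^q(\Sigma, \F_R^p)$ are trivial in all positive degrees. Consequently the tropical cohomology vanishes for $q \neq 0$, so the domain of the cap product is already zero.

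More concretely, I would simply invoke \cref{proposition:fp_cohom_trivial} to conclude that $H^q(\Sigma, \F_R^p) = 0$ for every $q \geq 1$. Since cohomology is only defined in non-negative degrees, the assumption $q \neq 0$ forces $q \geq 1$, and any $R$-linear map out of the zero module is the zero map. Hence
\[
\frown[\Sigma,w]\colon H^q(\Sigma, \F_R^p) \to H_{d-q}^{BM}(\Sigma, \F_{d-p}^R)
\]
is trivially the zero map for $q \neq 0$.

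There is no genuine obstacle here: the statement is a soft consequence of the fact that fans have a single compact cell, which makes their sheaf cohomology concentrated in degree zero. The content of tropical Poincaré duality therefore sits entirely in the degree zero map $\frown[\Sigma,w]\colon H^0(\Sigma, \F_R^p) \to H_d^{BM}(\Sigma, \F_{d-p}^R)$ together with the vanishing of the Borel--Moore homology groups in the remaining degrees. This observation, while trivial, is the reason that TPD for fans is really a statement about a single cap product in each bidegree $p$, and it motivates the subsequent reduction of TPD to Euler characteristic and injectivity considerations.
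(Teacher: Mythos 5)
Your proposal is correct and follows exactly the paper's argument: both invoke the vanishing of tropical cohomology in positive degrees (the fan has a single compact cell, the vertex) so that the domain of the cap product is zero for $q \neq 0$. No further comment is needed.
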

\begin{proof}
	By \cref{prop:fp_cohom_trivial}, we have that $H^q (\Sigma, \F_p^R)=0$ for $q\neq 0$, hence this cap product is only non-trivial when $q\not = 0$,
\end{proof}
The above proposition shows that in the fan-case, the only interesting cap products are of the form $\frown[\Sigma,w] \colon H^0(\Sigma,\F_R^p) \to H_{d}^{BM}(\Sigma,\F_{d-p}^R)$, for $p=0,\dots, d$. Moreover, in \cref{prop:cap_injective} below, we show that these are injective for any commutative ring $R$. In the case where $R=\R$, this was shown in \cite[Theorem 4.3.1]{mastersthesis}, and for $R=\Z$, it is stated in \cite[Section 3.2.2]{AminiPiquerezFans}.
\begin{proposition}\label{prop:cap_injective}
	For an $R$-balanced fan $(\Sigma,w)$ of dimension $d$, the map
	\begin{align*}
		\frown[\Sigma,w] \colon H^0(\Sigma,\F_R^p) &\to H_{d}^{BM}(\Sigma,\F_{d-p}^R)
	\end{align*}
	is injective.
\end{proposition}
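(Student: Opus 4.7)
The plan is to unfold the cap product explicitly in the special case $q=0$ and to reduce injectivity to two facts: the injectivity of the contraction maps on each top-dimensional face, and the fact that $\F_p^R(v)$ is generated by the sub-modules $\bigwedge^p L_R(\alpha)$ for $\alpha \in \Sigma^d$.

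First, by Proposition \ref{proposition:fp_cohom_trivial}, we may identify $H^0(\Sigma,\F_R^p) = \F_R^p(v)$. Since $\Sigma$ has dimension $d$, there are no cellular chains in degrees above $d$, so $H_d^{BM}(\Sigma,\F_{d-p}^R) = \ker(\partial_d)$ sits inside $C_d^{BM}(\Sigma,\F_{d-p}^R)$ as a submodule. Consequently, to say that $u \frown [\Sigma,w] = 0$ in $H_d^{BM}$ is to say that the fundamental chain representative of $u \frown [\Sigma,w]$ is actually the zero chain. Specializing Definition \ref{def:cap_product} to $q=0$, this representative is the tuple
\[
u \frown \mathrm{Ch}(\Sigma,w) \;=\; \bigl(\,\rho_{v,\alpha}(u) \,\lrcorner\, w(\alpha)\Lambda_\alpha \,\bigr)_{\alpha \in \Sigma^d},
\]
where I have absorbed $w(\alpha)$ into the contraction and used that $\iota_{\alpha,\alpha}$ is the identity. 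Thus $u \frown [\Sigma,w] = 0$ is equivalent to $\rho_{v,\alpha}(u) \,\lrcorner\, w(\alpha)\Lambda_\alpha = 0$ in $\F_{d-p}^R(\alpha) = \bigwedge^{d-p} L_R(\alpha)$ for every facet $\alpha$.

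Next, I invoke Definition \ref{def:contraction_on_face}: since $w(\alpha) \in R$ is not a zero-divisor, the contraction map $\lrcorner\, w(\alpha)\Lambda_\alpha \colon \bigwedge^p L_R(\alpha)^* \to \bigwedge^{d-p} L_R(\alpha)$ is injective (this is a direct consequence of the basis formula in Remark \ref{remark:contraction_basis_formula}, together with the fact that multiplication by $w(\alpha)$ is injective on $R$). Therefore $\rho_{v,\alpha}(u) = 0$ in $\F_R^p(\alpha)$ for every $\alpha \in \Sigma^d$.

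Finally, I conclude by a generation argument on $\F_p^R(v)$. By Definition \ref{def:Fp_sheaves},
\[
\F_p^R(v) \;=\; \sum_{v \preceq \gamma} \bigwedge^p L_R(\gamma) \;=\; \sum_{\alpha \in \Sigma^d} \bigwedge^p L_R(\alpha),
\]
where the last equality holds because the fan is pure-dimensional and the $\bigwedge^p$-functor is compatible with the sublattice inclusions $L_R(\gamma) \subseteq L_R(\alpha)$ whenever $\gamma \preceq \alpha \in \Sigma^d$. Dualizing, an element $u \in \F_R^p(v) = \F_p^R(v)^*$ vanishes as soon as it vanishes after restriction along each inclusion $\iota_{\alpha,v} \colon \bigwedge^p L_R(\alpha) \hookrightarrow \F_p^R(v)$; but these restrictions are exactly the maps $\rho_{v,\alpha}(u)$. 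Hence $u = 0$, proving injectivity. The only nontrivial ingredient is the injectivity of contraction by a non-zero-divisor, which is why the hypothesis in Definition \ref{def:weights_R} that weights are not zero-divisors is imposed.
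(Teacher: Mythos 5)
Your proof is correct and follows essentially the same route as the paper: identify the cap product at $q=0$ with $u \mapsto (\rho_{v,\alpha}(u)\,\lrcorner\, w(\alpha)\Lambda_\alpha)_{\alpha\in\Sigma^d}$, use that contraction by $w(\alpha)\Lambda_\alpha$ is injective since $w(\alpha)$ is not a zero-divisor, and use that the facet modules generate $\F_p^R(v)$ so that $\rho_{v,\alpha}(u)=0$ for all $\alpha$ forces $u=0$. The paper packages this last step as ``$\oplus_\alpha\rho_{v,\alpha}$ is injective because it is dual to the surjection $\oplus_{\alpha\in\Sigma^d}\F_p^R(\alpha)\to\F_p^R(v)$,'' which is exactly your generation argument written as a composition of injective maps.
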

\begin{proof}
	We have that $H_{d}^{BM}(\Sigma,\F_{d-p}^R) = \ker(\partial_d)$, and $H^0(\Sigma,\F_R^p) = \F_R^p (v)$, so that $\frown[\Sigma,w]$ is exactly 
	\begin{align*}
		\frown\mathrm{Ch}(\Sigma,w) \colon \F_R^p (v) & \to \bigoplus_{\alpha \in \Sigma^d} \F_{d-p}^R (\alpha) \\
		u &\mapsto (   \rho_{v,\alpha} (u) \, \lrcorner \, w(\alpha)\Lambda_\alpha )_{\alpha \in \Sigma^d}
	\end{align*}
	where the image lies in $H_{d}^{BM}(\Sigma,\F_{d-p}^R) \subseteq \bigoplus_{\alpha \in \Sigma^d} \F_{d-p}^R (\alpha)$.
	This is the composition of the map $\oplus_\alpha \rho_{v,\alpha} \colon \F_R^p (v) \to \oplus_{\alpha \in \Sigma^d} \F_R^p (\alpha)$, which is injective, since it is dual to the surjection $\oplus_{\alpha \in \Sigma^d} \F_p^R (\alpha) \to  \F_p^R(v)$, and the direct sum of the contractions $\oplus_{\alpha \in \Sigma^d} \, \lrcorner\, w(\alpha)\Lambda_\alpha$, which are injective (\cref{def:contraction_on_face}). Thus this cap product is the composition of injective maps and is therefore injective.
\end{proof}
\begin{proposition}\label{prop:cap_product_on_stars}
	Let $(\Sigma,w)$ be an $R$-balanced fan and $\gamma\in \Sigma$ a face. Then the cap product map on the star fan
	$$\frown [\Star{\gamma},w]\colon H^0(\Star{\gamma}, \F_R^p) \to H_d^{BM}(\Star{\gamma}, \F_{d-p}^{R})$$
	is given by
 	$$u \mapsto (  u \,\lrcorner\, w(\alpha)\Lambda_\alpha )_{\substack{\alpha \in \Sigma^d \\ \alpha \succ \gamma}},$$
	where we identify $H^0(\Star{\gamma}, \F_R^p)\cong \F_p^R(\gamma)$, and $H_d^{BM}(\Star{\gamma}, \F_{d-p}^{R})$ as the kernel of the first map in the complex from \cref{prop:star_homology_cohomology}.
\end{proposition}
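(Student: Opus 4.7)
The plan is to obtain the formula by unfolding the definition of the cap product (\cref{def:cap_product}) on the star fan and then using the identifications supplied by \cref{prop:star_homology_cohomology}. First, by \cref{prop:stars_are_balanced}, the weighted fan $(\Star{\gamma},w)$ is itself $R$-balanced of dimension $d$, with fundamental chain given by $(w(\alpha)\Lambda_\alpha)_{\alpha \succ \gamma}$ (identifying the maximal cells $\widetilde{\alpha}$ of the non-subdivided structure on $\Star{\gamma}$ with the cones $\alpha \in \Sigma^d$ containing $\gamma$, which preserves the lattice by \cref{prop:star_homology_cohomology}). Hence $\frown[\Star{\gamma},w]$ is defined, and it is enough to apply the general formula with $q=0$.

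Next I would write down $\frown\mathrm{Ch}(\Star{\gamma},w)$ at the cochain level. For $u \in C^0(\Star{\gamma},\F_R^p) = \F_R^p(\widetilde{v})$, \cref{def:cap_product} specializes to
\begin{equation*}
u \mapsto \left(\sum_{\substack{\alpha' \in \Star{\gamma}^d \\ \widetilde{v},\,\tau \preceq \alpha'}} w(\alpha')\,\iota_{\alpha',\tau}\bigl(\rho_{\widetilde{v},\alpha'}(u) \,\lrcorner\, \Lambda_{\alpha'}\bigr)\right)_{\tau \in \Star{\gamma}^d}.
\end{equation*}
Since $\tau$ is taken in top dimension, the condition $\tau \preceq \alpha'$ with $\alpha' \in \Star{\gamma}^d$ forces $\alpha' = \tau$, collapsing the inner sum to a single term, and $\iota_{\tau,\tau}$ is the identity. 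Thus the cochain becomes $(w(\tau)\,\rho_{\widetilde{v},\tau}(u) \,\lrcorner\, \Lambda_\tau)_\tau$.

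To conclude, I would translate the answer through the two identifications coming from \cref{prop:star_homology_cohomology}. On the cohomology side, $\F_R^p(\widetilde{v}) \cong \F_R^p(\gamma)$, and this isomorphism intertwines $\rho_{\widetilde{v},\widetilde{\alpha}}$ with $\rho_{\gamma,\alpha}$ because $L_\Z(\widetilde{\alpha}) = L_\Z(\alpha)$ as sublattices of $N$, so the sheaf maps out of $\widetilde{v}$ agree with those out of $\gamma$. On the homology side, the proposition identifies $H_d^{BM}(\Star{\gamma},\F_{d-p}^R)$ with the kernel of $\partial_d^\gamma$, so the image cochain is precisely a cycle of the desired form. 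The final cosmetic step is to absorb the scalar into the contraction (using $R$-bilinearity, $w(\alpha)\,(v \,\lrcorner\, \Lambda_\alpha) = v \,\lrcorner\, w(\alpha)\Lambda_\alpha$) and to adopt the customary abuse of writing $u \,\lrcorner\, w(\alpha)\Lambda_\alpha$ instead of $\rho_{\gamma,\alpha}(u)\,\lrcorner\, w(\alpha)\Lambda_\alpha$, which matches the formula in the statement.

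There is no real obstacle here; the argument is an unwinding of definitions, and the only point requiring care is checking that the canonical isomorphism $\F_R^p(\widetilde{v}) \cong \F_R^p(\gamma)$ is compatible with the restriction maps $\rho_{-,\alpha}$, but this is immediate from the equality of lattices $L_\Z(\widetilde{\alpha}) = L_\Z(\alpha)$ already noted in the proof of \cref{prop:star_homology_cohomology}.
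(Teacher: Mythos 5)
Your proposal is correct and follows essentially the same route as the paper: invoke \cref{prop:stars_are_balanced} for the fundamental class, unwind \cref{def:cap_product} at the chain level with $q=0$ (where the inner sum collapses since the target cell is top-dimensional), and then transport the resulting cycle through the identifications of \cref{prop:star_homology_cohomology}, using that the lattices $L_\Z(\widetilde{\alpha})=L_\Z(\alpha)$ are unchanged so the subdivided cells carry the same data as the cones $\alpha \succ \gamma$. The paper's own proof is in fact terser, simply stating that the formula follows from the induced map in homology between the subdivided and non-subdivided cell structures, so your write-up is, if anything, slightly more explicit at the same level of rigor.
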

\begin{proof}
	The identifications are justified by \cref{prop:star_homology_cohomology}, and the existence of this fundamental class by \cref{prop:stars_are_balanced}. It remains to show that the stated formula corresponds to the cap product. 
	
	Consider a subdivision making $\Star{\gamma}$ a pointed fan. Each $d$-cell $\widetilde{\alpha}$ of the subdivision maps to a $d$-cell $\alpha\succ \gamma$ of $\Sigma$, similarly to the proof of \cref{prop:star_homology_cohomology}. The formula then follows from the induced map in homology.
\end{proof}

    \section{Tropical Poincaré duality}\label{section:Tropical_Poincare_duality}
In \cref{subsection:tpd_def_ex}, we define TPD over a ring $R$, and give an example of a non-matroidal fan satisfying the duality. In \cref{subsection:nec_cond_euler_char}, we give some necessary conditions for the duality to hold, along with a characterization by an Euler characteristic condition. Finally, in \cref{subsection:dim_codim1}, we turn to the problem of determining which fans are TPD spaces. We classify all the one-dimensional fans satisfying TPD over a ring $R$ and study tropical fan hypersurfaces in $\R^n$ satisfying TPD. This forms a first step towards answering \cref{question:tpd_when}.

\subsection{Definition and examples}\label{subsection:tpd_def_ex}
In this subsection, we define what it means for a fan to satisfy TPD over a commutative ring $R$. When $R=\Z$, this is the definition from \cite[Definition 5.2]{Lefschetz11}, and when $R=\R$, our definition can be shown to be equivalent to \cite[Definition 4.12]{JellShawSmacka}.

\begin{definition}\label{def:tpd}
	We say that an $R$-balanced rational polyhedral fan $\Sigma$ of dimension $d$ with weights $w$ satisfies \emph{tropical Poincaré duality over $R$} if the cap product with the fundamental class 
	$$\frown[\Sigma,w] \colon H^q (\Sigma,\F_R^{p})\to H_{d-q}^{BM}(\Sigma, \F_{d-p}^R)$$
	is an isomorphism for all $p,q=0,\dots,d$.
\end{definition}
\begin{example}
	Returning again to \cref{ex:complete}, one can verify that all the possible cap products are isomorphisms, as we did explicitly in \cref{ex:complete_cap_product} for the cap product $\frown[\Sigma,w] \colon H^0 (\Sigma,\F_\Z^{1})\to H_{2}^{BM}(\Sigma, \F_{1}^\Z)$, so that this fan satisfies tropical Poincaré duality over $\Z$.
\end{example}
\begin{example}
	Similarly, explicit computations can be carried out for \cref{ex:cross}. Comparing back to \cref{ex:cross_homology}, we have that $\dim_\Z H_1^{BM}(\Sigma,\F_1^R)=2$ and $\dim_\Z H_1^{BM}(\Sigma,\F_0^\Z) = 3$, while $\dim_\Z H^0(\Sigma,\F_1^R)=2$ and $\dim_\Z H^0(\Sigma,\F_0^R)=1$. 
	
	Thus the cap product maps 
	\begin{align*}
		\frown[\Sigma,w] \colon H^0 (\Sigma,\F_\Z^{0}) &\to H_{1}^{BM}(\Sigma, \F_{1}^\Z), \quad \text{and} \\
		\frown[\Sigma,w] \colon H^0 (\Sigma,\F_\Z^{1}) &\to H_{1}^{BM}(\Sigma, \F_{0}^\Z)
	\end{align*}
	are not isomorphisms, and the fan does not satisfy tropical Poincaré duality over $\Z$.
\end{example}

As mentioned in the introduction, the Bergman fans of matroids satisfy TPD over $\R$ and $\Z$ \cite{JellShawSmacka, Lefschetz11}, however these are not the only such fans, as can be seen from the next example.
\begin{example} \label{ex:not_all_tpd_are_bergman_fans}
	\begin{figure}
		\centering
		\includestandalone[width=0.7\textwidth]{figures/babaee_huh_graph}
		\caption{The graph of cones for \cref{ex:not_all_tpd_are_bergman_fans}.}
		\label{fig:example2_graph}
	\end{figure}
	Let $f_1 \coloneqq (0,1,1,1)$, $f_2 \coloneqq (1,0,-1,1)$, $f_3 \coloneqq (1,1,0,-1)$ and $f_4 \coloneqq (1,-1,1,0)$ be vectors in $\R^4$ and let $e_1,e_2,e_3$ and $e_4$ be the standard basis. Consider the fan generated by the cones of vertices connected by an edge in \cref{fig:example2_graph}, so that for instance the cone of $e_1$ and $f_2$ is included. This fan was used in \cite{BabaeeHuh} to construct a counter-example to the strongly positive Hodge conjecture. It is not matroidal, since it does not satisfy the Hard Lefschetz property of \cite{AdiprasitoHuhKatz}.

	We compute its cellular tropical homology and cohomology over $\Q$ using the cellular sheaves package \cite{CellularSheaves} for \cite{polymake}, we have:
	\begin{equation*}
		\begin{aligned}[c]
			H^0(\Sigma, \F_\Q^0) &\cong \Q \\
			H^0(\Sigma, \F_\Q^1) &\cong \Q^4 \\
			H^0(\Sigma, \F_\Q^2) &\cong \Q^5
		\end{aligned}
		\qquad\text{ and }\qquad
		\begin{aligned}[c]
			H_2^{BM}(\Sigma, \F_2^\Q) &\cong \Q \\
			H_2^{BM}(\Sigma, \F_1^\Q) &\cong \Q^4 \\
			H_2^{BM}(\Sigma, \F_0^\Q) &\cong \Q^5 
		\end{aligned}
	\end{equation*}
	with all other groups being zero. By \cref{prop:cap_injective}, the cap product is injective, and since the dimensions agree, the cap products are isomorphisms when they are nonzero. Hence the fan satisfies TPD over $\Q$, where the weights for the fundamental class are chosen so as to form a generator of $H_2^{BM}(\Sigma, \F_2^\Q) = \Q$.
\end{example}

\subsection{Necessary conditions for tropical Poincaré duality}\label{subsection:nec_cond_euler_char}
We now turn to giving some necessary conditions for TPD to hold.

First, in light of \cref{prop:fp_cohom_trivial}, the Borel--Moore homology of fans satisfying TPD is concentrated in degree $d$. Indeed, by \cref{prop:fp_cohom_trivial}, $ H^q (\Sigma,\F_R^{p})=0$ for $q\not=0$, hence the isomorphism $\frown[\Sigma,w] \colon H^q (\Sigma,\F_R^{p})\cong H_{d-q}^{BM}(\Sigma, \F_{d-p}^R)$ gives $ H_{q}^{BM}(\Sigma, \F_{d-p}^R)=0$ for $q\not=d$.

Note also that, for $(\Sigma,w)$ be an $R$-balanced fan satisfying TPD over $R$, $\Sigma$ must be uniquely $R$-balanced by $w$. This is because the cap product maps $1\in R \cong H^0 (\Sigma, \F_R^0)$ (see \cref{prop:fp_cohom_trivial}) to $1\frown [\Sigma,w] = [\Sigma,w]\in H_d^{BM} (\Sigma,\F_d^R)$, which must be a generator. Then by \cref{def:balanced}, the fan $\Sigma$ is uniquely $R$-balanced.

\begin{example}
	For any ring $R$, the fan in \cref{fig:cross} is $R$-balanced, but not uniquely $R$-balanced by \cref{ex:cross_homology}, hence it cannot satisfy TPD over $R$.
\end{example}

Now, assuming that we are working over a field $\mathbbm{k}$, and that the Borel--Moore homology of the fan vanishes in an appropriate way, we can determine that the fan satisfies TPD through an Euler characteristic argument.
\begin{proposition}\label{prop:euler_char_condition}
	Let $\mathbbm{k}$ be a field, and $(\Sigma,w)$ be a $\mathbbm{k}$-balanced fan of dimension $d$. Suppose $H_q^{BM}(\Sigma,\F_p^\mathbbm{k})=0$ for $q\neq d$. Then, for a given $p$, the cap product 
	$$\frown[\Sigma,w] \colon H^q (\Sigma,\F_\mathbbm{k}^{p})\to H_{d-q}^{BM}(\Sigma, \F_{d-p}^\mathbbm{k})$$
	is an isomorphism for all $q$ if and only if
	\begin{equation}\label{eq:euler_char}
		(-1)^d \chi (C_\bullet^{BM}(\Sigma, \F_{d-p}^\mathbbm{k})) = \dim_k \F_\mathbbm{k}^p (v).
	\end{equation}
	Moreover, $(\Sigma,w)$ satisfies TPD over $\mathbbm{k}$ if and only if \cref{eq:euler_char} holds for all $p$.
\end{proposition}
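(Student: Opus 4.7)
The plan is to reduce the isomorphism question to a single cohomological degree and then invoke the injectivity of the cap product together with a dimension count made available by the Borel–Moore vanishing hypothesis.

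First I would observe, via \cref{proposition:fp_cohom_trivial}, that $H^q(\Sigma,\F_k^p)=0$ for $q\neq 0$ and $H^0(\Sigma,\F_k^p)=\F_k^p(v)$. Meanwhile, the hypothesis $H_q^{BM}(\Sigma,\F_p^k)=0$ for $q\neq d$ (applied to $\F_{d-p}^k$) forces $H_{d-q}^{BM}(\Sigma,\F_{d-p}^k)=0$ for all $q\neq 0$. Consequently the cap product $\frown[\Sigma,w]$ is automatically the zero map between two zero groups whenever $q\neq 0$, and so it is an isomorphism for all $q$ if and only if it is an isomorphism in degree $q=0$, namely
\[
\frown[\Sigma,w]\colon \F_k^p(v)\longrightarrow H_d^{BM}(\Sigma,\F_{d-p}^k).
\]

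Next I would use \cref{prop:cap_injective}, which tells us that this degree-$0$ cap product is always injective. Since we are working over a field $k$, an injective $k$-linear map between finite-dimensional vector spaces is an isomorphism if and only if the two dimensions agree. Thus isomorphism in degree $0$ is equivalent to
\[
\dim_k \F_k^p(v) \;=\; \dim_k H_d^{BM}(\Sigma,\F_{d-p}^k).
\]

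To convert the right-hand side into the stated Euler characteristic, I would use the vanishing hypothesis once more: since $H_q^{BM}(\Sigma,\F_{d-p}^k)=0$ for every $q\neq d$, the Euler characteristic of the chain complex collapses to a single term,
\[
\chi\bigl(C_\bullet^{BM}(\Sigma,\F_{d-p}^k)\bigr) \;=\; (-1)^d \dim_k H_d^{BM}(\Sigma,\F_{d-p}^k),
\]
so $\dim_k H_d^{BM}(\Sigma,\F_{d-p}^k)=(-1)^d \chi(C_\bullet^{BM}(\Sigma,\F_{d-p}^k))$. Substituting gives the stated criterion \cref{eq:euler_char}. The final sentence of the proposition is then immediate: TPD over $k$ requires the cap product to be an isomorphism for every pair $(p,q)$, which (since $q\neq 0$ is automatic) happens exactly when \cref{eq:euler_char} holds for every $p$.

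There is no real obstacle in this proof; the only subtlety is ensuring that the BM vanishing hypothesis is strong enough to both (a) make the higher-degree cap products trivially isomorphisms and (b) make the Euler characteristic sum collapse to $\dim_k H_d^{BM}$. Both uses of the hypothesis are clean, so the argument is essentially a packaging of \cref{proposition:fp_cohom_trivial}, \cref{prop:cap_injective}, and the rank–nullity principle over a field.
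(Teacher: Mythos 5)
Your argument is correct and follows essentially the same route as the paper's proof: reduce to the degree $q=0$ cap product via \cref{proposition:fp_cohom_trivial} and the Borel--Moore vanishing hypothesis, invoke the injectivity from \cref{prop:cap_injective} together with a dimension count over the field $k$, and collapse the Euler characteristic to $(-1)^d \dim_k H_d^{BM}(\Sigma,\F_{d-p}^k)$. No gaps to report.
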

\begin{proof}
	Since the only compact cell in $\Sigma$ is the vertex $v$, we have
	$$H^q(\Sigma, \F_\mathbbm{k}^p) = 
	\begin{cases*}
		\F_\mathbbm{k}^p (v) & if $q=0$, \\
		0 & otherwise.
	\end{cases*}$$
	By the vanishing condition on tropical Borel--Moore homology, the cap product $\frown[\Sigma,w] \colon H^q (\Sigma,\F_\mathbbm{k}^{p})\to H_{d-q}^{BM}(\Sigma, \F_{d-p}^\mathbbm{k})$
	is then immediately an isomorphism for $q\neq 0$, and 
	\begin{equation}\label{eq:euler_char_vanishing}
		\chi (C_\bullet^{BM}(\Sigma, \F_{d-p}^\mathbbm{k}))= \chi (H_\bullet^{BM}(\Sigma, \F_{d-p}^\mathbbm{k})) = (-1)^d \dim_\mathbbm{k} H_d^{BM}(\Sigma, \F_{d-p}^\mathbbm{k}).
	\end{equation}
	Since the cap product is injective by \cref{prop:cap_injective} and we are working over a field, the maps
	$$\frown[\Sigma,w] \colon H^0 (\Sigma,\F_\mathbbm{k}^{p})\to H_{d}^{BM}(\Sigma, \F_{d-p}^\mathbbm{k})$$
	are isomorphisms if and only if $\dim_\mathbbm{k} H^0 (\Sigma,\F_\mathbbm{k}^{p}) = \dim_\mathbbm{k} \F_\mathbbm{k}^p (v)$ is equal to $\dim_\mathbbm{k} H_d^{BM}(\Sigma, \F_{d-p}^\mathbbm{k})$. By \cref{eq:euler_char_vanishing}, this is exactly the claimed result.
\end{proof}

\subsection{Dimension one and codimension one}\label{subsection:dim_codim1}
We completely classify rational polyhedral fans of dimension 1 satisfying TPD over an arbitrary commutative ring. 
We begin with a utility lemma:
\begin{lemma}\label{lemma:bm_vanish_dim1}
	Let $R$ be a commutative ring, and $(\Sigma,w)$ an $R$-balanced fan of dimension one.
	Then we have $H_0^{BM}(\Sigma, \F_0^R)=0$ and $H_0^{BM}(\Sigma, \F_1^R)=0$.
\end{lemma}
\begin{proof}
	Let $v\in \Sigma$ be the vertex of the fan. By \cref{def:tropical_homology_cohomology}, the tropical Borel--Moore cochain complexes are:
	\begin{align*}
		C_\bullet^{BM}(\Sigma, \F_0^R) &\colon \quad \bigoplus_{\epsilon\in \Sigma^1} R \xrightarrow{\partial_1^0} R \to 0, \quad \text{and} \\
		C_\bullet^{BM}(\Sigma, \F_1^R) &\colon \quad \bigoplus_{\epsilon\in \Sigma^1} \F_1^R (\epsilon) \xrightarrow{\partial_1^1} \F_1^R (v) \to 0.
	\end{align*}
	Here $\partial_1^0$ is the map given by the matrix $(1 \; 1 \; \dots \; 1)$. It is surjective and thus $H_0^{BM}(\Sigma, \F_0^R)=0$.
	Similarly, $\F_1^R(v)=\sum_{\epsilon\in \Sigma^1} L_R (\epsilon) = \sum_{\epsilon\in \Sigma^1} \F_1^R (\epsilon)$, thus $\partial_1^1$ is surjective, hence $H_0^{BM}(\Sigma, \F_1^R)=0$.
\end{proof}

\begin{theorem}\label{thm:class_dim_1}
	Let $R$ be a commutative ring, and $(\Sigma,w)$ an $R$-balanced fan of dimension one.
	Then $(\Sigma,w)$ satisfies tropical Poincaré duality over $R$ if and only if it is uniquely $R$-balanced and all the weights are units in $R$.
\end{theorem}
\begin{proof}
	We need to show that all four of the following cap products
	\begin{enumerate}
		\item\label{thm:class_dim_1_item_1} $\frown [\Sigma,w] \colon H^1(\Sigma, \F_R^1) \to H_0^{BM}(\Sigma, \F_0^R)$,
		\item\label{thm:class_dim_1_item_2} $\frown [\Sigma,w] \colon H^1(\Sigma, \F_R^0) \to H_0^{BM}(\Sigma, \F_1^R)$,
		\item\label{thm:class_dim_1_item_3} $\frown [\Sigma,w] \colon H^0(\Sigma, \F_R^0) \to H_1^{BM}(\Sigma, \F_1^R)$,
		\item\label{thm:class_dim_1_item_4} $\frown [\Sigma,w] \colon H^0(\Sigma, \F_R^1) \to H_1^{BM}(\Sigma, \F_0^R)$,
	\end{enumerate}
	are isomorphisms if and only if $(\Sigma,w)$ is uniquely $R$-balanced and all the weights are units in $R$. We will show this in three parts:
	\begin{enumerate}[(a)]
		\item\label{part_a} First, we show that the maps \eqref{thm:class_dim_1_item_1} and \eqref{thm:class_dim_1_item_2} are trivial maps between zero-modules.
		\item\label{part_b} Then we show that \eqref{thm:class_dim_1_item_3} being an isomorphism is the definition of being uniquely $R$-balanced. 
		\item\label{part_c} Finally, we show that \eqref{thm:class_dim_1_item_4} is an isomorphism if and only if $(\Sigma,w)$ is uniquely $R$-balanced, with the added condition that all the weights are units in $R$.
	\end{enumerate}
	In total, this will then show that $(\Sigma,w)$ satisfies tropical Poincaré duality over $R$ if and only if it is uniquely $R$-balanced and all the weights are units in $R$.

	Beginning with \ref{part_a}, by \cref{lemma:bm_vanish_dim1} and \cref{prop:fp_cohom_trivial}, all involved modules are zero. Moreover the cap product map is zero by \cref{prop:cap_zero_map}, hence the maps \eqref{thm:class_dim_1_item_1} and \eqref{thm:class_dim_1_item_2} are trivially isomorphisms.

	Next for \ref{part_b}, the map $\frown [\Sigma,w] \colon H^0(\Sigma, \F_R^0) \to H_1^{BM}(\Sigma, \F_1^R)$ is given by sending a scalar $\alpha\in H^0(\Sigma, \F_R^0) \cong R$ to $\alpha \frown [\Sigma,w]$. The $0$-contraction of a scalar is multiplication by this scalar, so that $\alpha \frown [\Sigma,w]=\alpha \cdot [\Sigma,w]$. It is therefore an isomorphism if and only if $\langle[\Sigma,w]\rangle$ generates $H_1^{BM}(\Sigma, \F_1^R)$, which is the definition of uniquely $R$-balanced (\cref{def:balanced}).

	Finally, we turn to \ref{part_c}. We begin with some notation. Let $v$ be the vertex of $\Sigma$ and number the one-dimensional rays as $\epsilon_1, \dots, \epsilon_m$, with weights $w_i=w(\epsilon_i)$. 
	The Borel--Moore cochain group is $C_1^{BM}(\Sigma, \F_0^R) = \oplus_{i=1}^m R$, which has a basis $x_1,\dots,x_m$, with $x_i$ corresponding to $\epsilon_i$. The elements $x_i-x_m \in C_1^{BM}(\Sigma,\F_0^R)$, for $i=1,\dots,m-1$ form a basis for $H_1^{BM}(\Sigma,\F_0^R)= \ker(1 \; 1 \; \dots \; 1)$. 
	For each $\epsilon_i$, we select the generator $\Lambda_i\in L_R(\epsilon_i)$ compatible with the orientation of $\epsilon_i$, and let $\Theta_i \coloneqq \iota_{\epsilon_i,v}(\Lambda_i)$ be its image under the inclusion $\iota_{\epsilon_i,v}\colon \F_1^R(\epsilon_i) \to \F_1^R(v)$. 
	Thus the fundamental class $[\Sigma,w]\in H_1^{BM}(\Sigma,\F_R^1)$ is explicitly the element $(w_i \Lambda_i)_{i=1}^m \in H_1^{BM}(\Sigma,\F_1^R) = \ker((\iota_{\epsilon_i,v})_{i=1}^m)\subset C_1^{BM}(\Sigma,\F_1^R)$.
	The cap product map $\frown [\Sigma,w] \colon H^0(\Sigma, \F_R^1) \to H_1^{BM}(\Sigma, \F_0^R)$ takes a covector $\phi \in H^0(\Sigma, \F_R^1)= \F_R^1 (v)$ to the element
	$$(w_i \phi(\Theta_i))_{i=1}^m \in H_1^{BM}(\Sigma, \F_0^R).$$
	
	Now, suppose all the weights $w_i$ are units in $R$ and $(\Sigma,w)$ is uniquely $R$-balanced. Then the elements $w_i \Theta_i$, for $i=1,\dots, m-1$, form a basis for $\F_1^R(v)$, with the corresponding dual basis $w_i^{-1}\Theta_i^*$ for $\F_R^1(v)$. Then, for each $j=1,\dots,m-1$, 
	$$w_j^{-1}\Theta_j^* \frown [\Sigma,w] = (w_i w_j^{-1}\Theta_j^*(\Theta_i))_{i=1}^m = (0,\dots,0, 1,0, \dots, w_m w_j^{-1}\Theta_j^* (\Theta_m)),$$
	where the only two non-zero entries are in the $j$-th and $m$-th positions. Since this is a cycle in $C_1^{BM}(\Sigma,\F_0^R)$, we must have
	$1 + w_m w_j^{-1}\Theta_j^* (\Theta_m) =0,$
	so that 
	$$w_j^{-1}\Theta_j^* \frown [\Sigma,w]=x_i-x_m.$$
	Thus the images of the basis elements $w_j^{-1}\Theta_j^*$ of $\F_1^R(v)$ form a basis of $H_1^{BM}(\Sigma,\F_0^R)$, hence $\frown [\Sigma,w]$ is an isomorphism.
	
	For the converse direction, we show that if either the weights are non-units or the fan is not uniquely $R$-balanced, then the cap product is not an isomorphism.
	
	First, suppose some weight $w_k$ is not a unit in $R$. Then for any $\phi \in H^0(\Sigma, \F_R^1)= \F_R^1 (v)$, the $k$-th component of $\phi \frown [\Sigma,w]$ is contained in the ideal $\langle w_k \rangle \subset R$, which does not contain $1$. Hence the element $x_k-x_m$ of $H_1^{BM}(\Sigma,\F_0^R)$ cannot be in the image of $\frown [\Sigma,w]$, which is therefore not surjective and hence not an isomorphism.

	Finally, suppose that $\Sigma$ is not uniquely $R$-balanced. Since $H_1^{BM}(\Sigma,\F_0^R)$ is free of rank $m-1$, we may assume that $\F_R^1(v)$ is as well, otherwise there cannot be an isomorphism. Since $\Sigma$ is not uniquely $R$-balanced, $\rank_R H_1^{BM}(\Sigma,\F_1^R) > 1$, so that by working with the Euler characteristics, we must have $\rank_R \F_1^R(v) < m-1$. Dualizing, we obtain that $\rank_R \F_R^1(v) < m-1 = \rank_R H_1^{BM}(\Sigma,\F_0^R)$ and so the cap product cannot be an isomorphism. 
\end{proof}
\begin{corollary}\label{cor:dim_1_field}
	Let $\mathbbm{k}$ be a field, $(\Sigma,w)$ a $\mathbbm{k}$-balanced fan of dimension one. Then $(\Sigma,w)$ satisfies TPD over $\mathbbm{k}$ if and only if it is uniquely $\mathbbm{k}$-balanced.
\end{corollary}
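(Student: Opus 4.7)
The plan is to derive this corollary directly from Proposition \ref{prop:class_dim_1}, which already provides a complete classification of one-dimensional TPD spaces over an arbitrary commutative ring $R$: TPD holds if and only if the fan is uniquely $R$-balanced \emph{and} every weight is a unit in $R$. The only additional ingredient needed in the field setting is to observe that the unit condition on the weights is automatic.

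First I would recall from Definition \ref{def:weights_R} that an $R$-weight function is required to take values that are not zero-divisors. Specializing to $R = k$, the non-zero-divisors of a field are precisely its nonzero elements, and every nonzero element of a field is a unit. Hence for any $k$-weighted fan $(\Sigma,w)$, each weight $w(\alpha) \in k$ is automatically a unit, so the ``weights are units'' clause of Proposition \ref{prop:class_dim_1} is satisfied trivially.

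Combining these two facts yields both implications of the corollary: over $k$, the conjunction in Proposition \ref{prop:class_dim_1} reduces to the single condition that $(\Sigma,w)$ be uniquely $k$-balanced. There is no genuine obstacle in this argument, as the statement is simply the specialization of the preceding proposition to the case where the base ring is a field; the only thing to verify is the compatibility between the zero-divisor convention in Definition \ref{def:weights_R} and the unit condition, which as noted is immediate.
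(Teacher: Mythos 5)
Your proposal is correct and matches the paper's own proof: both deduce the corollary from \cref{prop:class_dim_1} by noting that, since \cref{def:weights_R} forces weights to be non-zero-divisors, over a field they are nonzero and hence automatically units, so the unit condition in the proposition is vacuous.
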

\begin{proof}
	By \cref{thm:class_dim_1}, $(\Sigma,w)$ satisfies TPD if and only if it is uniquely $\mathbbm{k}$-balanced, and all the weights are units in $\mathbbm{k}$. The weights are non-zero by \cref{def:weights_R}, hence must be units since $\mathbbm{k}$ is a field.
\end{proof}
\begin{example}
	Let $\Sigma \subset \Z^3$ be the 1-dimensional fan with a vertex at the origin, and the four cones $\sigma_1,\sigma_2,\sigma_3$ and $\sigma_4$ generated by the vectors $\nu_1=(1,0,2), \, \nu_2=(-1,0,0), \, \nu_3=(0,-1,0), \, \nu_4=(0,1,-2)$ respectively. This is a balanced fan with the constant unit weight function $w(\sigma_i)=1$. The Borel--Moore chain complex $C_\bullet^{BM}(\Sigma,\F_1^\Z)$ can be written as
	% https://q.uiver.app/?q=WzAsNCxbMCwwLCIwIl0sWzEsMCwiXFxsYW5nbGUgXFxMYW1iZGFfMSBcXHJhbmdsZV9cXFpcXG9wbHVzIFxcbGFuZ2xlIFxcTGFtYmRhXzIgXFxyYW5nbGVfXFxaIFxcb3BsdXMgXFxsYW5nbGUgXFxMYW1iZGFfMyBcXHJhbmdsZV9cXFogXFxvcGx1cyBcXGxhbmdsZSBcXExhbWJkYV80IFxccmFuZ2xlX1xcWiJdLFsyLDAsIlxcRl8xXlxcWih2KSJdLFszLDAsIjAiXSxbMCwxXSxbMSwyLCIoXFxpb3RhX3tcXHNpZ21hX2ksdn0pIl0sWzIsM11d
	\[\begin{tikzcd}
		0 & {\langle \Lambda_1 \rangle_\Z\oplus \langle \Lambda_2 \rangle_\Z \oplus \langle \Lambda_3 \rangle_\Z \oplus \langle \Lambda_4 \rangle_\Z} & {\F_1^\Z(v)} & 0
		\arrow[from=1-1, to=1-2]
		\arrow["{(\iota_{\sigma_i,v})}", from=1-2, to=1-3]
		\arrow[from=1-3, to=1-4]
	\end{tikzcd}\]
	Since $\iota_{\sigma_i,v}(\Lambda_i)=\nu_i$, we see in fact that $H_1^{BM}(\Sigma,\F_1^\Z)=\langle [\Sigma,w] \rangle$, where $[\Sigma,w]=(\Lambda_1,\Lambda_2,\Lambda_3,\Lambda_4)$. Thus $(\Sigma,w)$ is also uniquely $\Z$-balanced. 
	Moreover, the complex $C_\bullet^{BM}(\Sigma,\F_0^\Z)$ is
	% https://q.uiver.app/?q=WzAsNCxbMCwwLCIwIl0sWzMsMCwiMCJdLFsxLDAsIlxcWl40Il0sWzIsMCwiXFxaIl0sWzIsMywiKDFcXCwgMVxcLDFcXCwxKSJdLFszLDFdLFswLDJdXQ==
	\[\begin{tikzcd}
		0 & {\Z^4} & \Z & 0,
		\arrow["{(1\, 1\,1\,1)}", from=1-2, to=1-3]
		\arrow[from=1-3, to=1-4]
		\arrow[from=1-1, to=1-2]
	\end{tikzcd}\]
	so that $H_1^{BM}(\Sigma,\F_0^\Z)\cong \Z^3$. We pick the basis $\nu_1, \nu_2, \nu_3$ for $\F_\Z^1 (v)$, and balancing gives $\nu_4=-\nu_1-\nu_2-\nu_3$. The dual basis for $\F_\Z^1 (v)$ is $\nu_1^*, \nu_2^*, \nu_3^*$. We see that 
	\begin{align*}
		\nu_1^* \frown [\Sigma,w] &=(\oplus \lrcorner \, \Lambda_{\sigma_i})(\oplus \rho_{v,\sigma_i})(\nu_1^*) \\
		&=(\oplus \lrcorner \, \Lambda_{\sigma_i})( \oplus (\nu_1^* \circ \iota_{\sigma_i,v})) \\
		&=(\nu_1^* (\iota_{\sigma_1,v}(\Lambda_1) ),\; \nu_1^*(\iota_{\sigma_2,v}(\Lambda_2)),\;\nu_1^*(\iota_{\sigma_3,v}(\Lambda_3)),\; \nu_1^*(\iota_{\sigma_4,v}(\Lambda_4))) \\
		&= (\nu_1^* (\nu_1), \nu_1^* (\nu_2), \nu_1^* (\nu_3), \nu_1^* (\nu_4)) \\
		&= (1,0,0,-1).
	\end{align*}
	Similarly, $\nu_2^* \frown [\Sigma,w]=(0,1,0,-1)$, and $\nu_3^* \frown [\Sigma,w]=(0,0,1,-1)$. Since the images of the generating set $y_1^*,y_2^*,y_3^*$ for $\F_\Z^1(v)$ is a generating set for $H_1^{BM}(\Sigma,\F_0^\Z)\cong \Z^3$, the cap product is an isomorphism.
\end{example}

For codimension 1 fan tropical cycles in $\R^n$, which are fan tropical hypersurfaces,  we can characterize the Newton polytopes of the hypersurfaces having TPD. We refer to \cite[Chap. 2]{MikhalkinRauBook} for background on tropical hypersurfaces in $\R^n$, which they call \emph{very affine tropical hypersurfaces}.
\begin{proposition}\label{prop:fan_tropical_hypersurface}
	Let $f\in \T [x_0^{\pm 1},\dots , x_{d}^{\pm 1}]$ be a tropical Laurent polynomial such that the very affine tropical cycle $X=V(f) \subset \R^{d+1}$ is supported on a pointed fan. If $X$ satisfies TPD over a commutative ring $R$, then the Newton polytope $\Delta(f)$ of $f$ is a simplex.
\end{proposition}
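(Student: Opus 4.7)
The plan is to analyze the cap product
\[
\frown [X,w] \colon H^0(X, \F_R^d) \to H_d^{BM}(X, \F_0^R),
\]
which must be an isomorphism by tropical Poincaré duality, and to compute both sides as $R$-modules so that comparing their ranks forces a sharp constraint on $\Delta(f)$.

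For the source, note that since $X$ is assumed to be a pointed fan, $\Delta(f)$ must be full-dimensional in $\R^{d+1}$; otherwise the orthogonal complement of its affine span would provide a nontrivial lineality space for $X$. Each facet $\alpha_e \in X^d$ corresponds to an edge $e = [v_i, v_j]$ of $\Delta(f)$, and under the identification $\bigwedge^d \Z^{d+1} \cong \Z^{d+1}$ the generator $\Lambda_{\alpha_e}$ is proportional to $v_j - v_i$. Full-dimensionality means $\{v_j - v_i\}$ spans $\R^{d+1}$, so $\F_d^\Z(v) = \sum_e \Z\Lambda_{\alpha_e}$ is a $\Z$-sublattice of $\bigwedge^d \Z^{d+1}$ of full rank $d+1$. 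Combining with \cref{proposition:fp_cohom_trivial} and dualizing yields $H^0(X, \F_R^d) \cong \F_R^d(v) \cong R^{d+1}$.

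For the target I would argue topologically. The fan $X$ is the open cone on its link $\ell(X) = X \cap S^d$, so the one-point compactification $X^+$ is homeomorphic to the suspension $\Sigma \ell(X)$, yielding
\[
H_d^{BM}(X, \F_0^R) \cong \tilde{H}_{d-1}(\ell(X), R).
\]
Because $X$ is the codimension-one skeleton of the normal fan of $\Delta(f)$, its link $\ell(X)$ is exactly the $(d-1)$-skeleton of the polyhedral $d$-sphere $\partial \Delta(f)^\vee$. Letting $m+1$ denote the number of vertices of $\Delta(f)$, this sphere has $m+1$ top-dimensional cells dual to those vertices; removing their interiors from $S^d$ gives a space homotopy equivalent to a wedge of $m$ copies of $S^{d-1}$, and consequently $\tilde{H}_{d-1}(\ell(X), R) \cong R^m$.

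Combining these computations, the TPD isomorphism forces $R^{d+1} \cong R^m$ as $R$-modules. Since commutative rings have invariant basis number, this gives $m = d+1$, so $\Delta(f)$ has exactly $d+2$ vertices; a full-dimensional polytope in $\R^{d+1}$ achieving this minimum has affinely independent vertices, hence is a simplex. I expect the main technical hurdle will be the rigorous identification of $\ell(X)$ with the $(d-1)$-skeleton of $\partial \Delta(f)^\vee$ and the justification of the homotopy equivalence $\ell(X) \simeq \bigvee_m S^{d-1}$; both ultimately rest on the classical duality between a full-dimensional polytope and its normal fan, together with the standard fact that $S^d$ minus $k$ open disks deformation retracts onto $\bigvee_{k-1} S^{d-1}$.
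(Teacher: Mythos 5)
Your proposal is correct and follows essentially the same route as the paper: both hinge on the single cap product $\frown[X,w]\colon H^0(X,\F_R^{d})\to H_{d}^{BM}(X,\F_{0}^R)$, identify the source with $\F_R^d(v)\cong R^{d+1}$ (using full-dimensionality of $\Delta(f)$, forced by pointedness) and the target with $R^{\#\mathrm{Vert}(\Delta(f))-1}$ via $X$ being the codimension-one skeleton of the normal fan, and then conclude that a full-dimensional polytope in $\R^{d+1}$ with $d+2$ vertices is a simplex. The only difference is that you justify the count $\#\mathrm{Vert}(\Delta(f))-1$ by an explicit link/one-point-compactification argument (equivalently, a cellular homology computation for the $(d-1)$-skeleton of the induced sphere decomposition), a step the paper states directly from the dual-fan description, so your write-up is a slightly more self-contained version of the same proof.
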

\begin{proof}
	By assumption, the very affine tropical cycle $X$ is a pointed $d$-dimensional rational polyhedral fan (\cite[Cor 2.3.2]{MikhalkinRauBook}), thus $H^q(X,\F_R^p)=0$ for all $q>0$ and all $p$, and the isomorphisms $\frown [X] \colon H^q(X,\F_R^p) \to H_{d-q}^{BM}(X, \F_{d-p}^R)$ give in particular that $H_{d-q}^{BM}(X, \F_{0}^R)=0$ for all $q>0$.

	Since $X$ is the $d$-skeleton of the dual fan to $\Delta(f)$ by \cite[Thm 2.3.7, Cor 2.3.2]{MikhalkinRauBook}, 
	$\dim_R H_{d}^{BM} (X, \F_0 )$ is $\#\mathrm{Vert}(\Delta(f))-1$, the number of vertices of the polytope $\Delta(f)$, minus $1$.
	
	Since $X$ is $d$-dimensional, $\dim_R H^0 (X, \F_R^p)= \dim_R \F_R^p (v) = \binom{d+1}{p}$, thus by Poincaré duality we have $$\#\mathrm{Vert}(\Delta(f))-1 = \dim_R H_d^{BM}(X, \F_0^R) = \dim_R H^0 (X, \F_R^d)=\binom{d+1}{d}=d+1,$$ 
	and so $\Delta(f)$, being $(d+1)$-dimensional and having $d+2$ vertices, is a simplex.
\end{proof}
    \section{Local tropical Poincaré duality spaces}\label{section:local}
In this section, we study \cref{question:local_tpd_when}. In \cref{subsection:face_conditions}, we prove \cref{thm:stars_poincare_duality}. This theorem implies that TPD on faces of a fan, along with vanishing of its tropical BM homology, gives TPD on the whole fan. A version of the proof gives a partial classification of TPD spaces of dimension two. Using \cref{thm:stars_poincare_duality}, we prove \cref{thm:local_tpds_char} in \cref{subsection:local_tpd_theorem}, which states that local TPD spaces are exactly fans whose codimension one faces are local TPD spaces, and all of whose faces have vanishing tropical BM homology. Finally, we use the dimension one classification from \cref{thm:class_dim_1} to give a more geometric characterization of local TPD spaces in \cref{cor:codimension_1_geometric}.

\subsection{TPD from faces}\label{subsection:face_conditions}
We fix a principal ideal domain $R$, and we use the following shortened notation $H_{d,d-p}^{BM}(\Sigma;R) \coloneqq H_d^{BM}(\Sigma, \F_{d-p}^R)$.
We prove \cref{thm:stars_poincare_duality} in two steps: The first step will be to show \cref{prop:diagram_isomorphisms}, which relates the cellular chain complex $C_c^\bullet (\Sigma,\F_R^p)$ to a complex involving the Borel--Moore homology groups $H_d^{BM}(\Star{\gamma}, \F_{d-p}^R)$ for faces $\gamma \in \Sigma$ by using the cap product, which we show is exact. We then prove the theorem by showing that TPD on the faces, along with exactness in the mentioned complex, imply Poincaré duality for the whole fan.

Let $\Sigma$ be a $d$-dimensional rational polyhedral fan. For each maximal face $\alpha \in \Sigma^d$, the constant sheaf $\F_{d-p}^R(\alpha)_{\Cone{\alpha}}$ gives a cochain complex $(C_c^\bullet(\Cone{\alpha},\F_{d-p}^R(\alpha)_{\Cone{\alpha}}), d_\alpha^\bullet)$. Taking the direct sum of these for all $\alpha \in \Sigma^d$, we obtain a complex
\begin{equation}\label{eq:A_cocomplex}
	(A^\bullet,d^\bullet)\coloneqq(\oplus_\alpha C_c^\bullet(\Cone{\alpha},\F_{d-p}^R(\alpha)_{\Cone{\alpha}}), \oplus_\alpha d_\alpha^\bullet).
\end{equation}
The $i$-th term of this complex is given by 
$$A^i = \oplus_\alpha C_c^i (\Cone{\alpha},\F_{d-p}^R(\alpha)_{\Cone{\alpha}})= \oplus_{\alpha \in \Sigma^d} \oplus_{\substack{\gamma \in \Sigma^i \\ \gamma \prec \alpha}} \F_{d-p}^R (\alpha).$$
Rearranging terms, we may use \cref{prop:star_homology_cohomology} to obtain an inclusion
$$\oplus_{\gamma \in \Sigma^i} H_{d,d-p}^{BM}(\Star{\gamma};R) \subseteq A^i.$$
\begin{proposition}\label{prop:lower_row_is_complex}
	There is a cochain complex
	$(\oplus_{\gamma \in \Sigma^\bullet} H_{d,d-p}^{BM}(\Star{\gamma};R), \overline{d^\bullet}),$ which is the restriction of the cochain complex $(A^\bullet, d^\bullet)$ from \cref{eq:A_cocomplex}.
\end{proposition}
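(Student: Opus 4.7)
The plan is to show that the differential $d^\bullet$ of $(A^\bullet, d^\bullet)$ preserves the subspace $\bigoplus_{\gamma \in \Sigma^\bullet} H_{d,d-p}^{BM}(\Star{\gamma};R) \subseteq A^\bullet$; once that restriction is established, the cochain complex property $\overline{d^{i+1}} \circ \overline{d^i} = 0$ is automatic from the corresponding identity on $A^\bullet$. The whole statement reduces to verifying that the cycle conditions defining the star fan Borel--Moore homology at faces $\gamma$ assemble compatibly along the face-ordering differential.

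First, I would unfold the relevant pieces explicitly. By \cref{prop:star_homology_cohomology}, an element of $H_{d,d-p}^{BM}(\Star{\gamma};R)$ is a tuple $(x_\gamma^\alpha)_{\alpha \succ \gamma,\, \alpha \in \Sigma^d}$ with $x_\gamma^\alpha \in \F_{d-p}^R(\alpha)$ satisfying
$$\sum_{\substack{\alpha \in \Sigma^d \\ \alpha \succ \beta}} \mathcal{O}(\beta,\alpha)\, \iota_{\alpha,\beta}(x_\gamma^\alpha) = 0 \quad \text{for each } \beta \in \Sigma^{d-1} \text{ with } \beta \succ \gamma.$$
The inclusion sends $(x_\gamma)_{\gamma \in \Sigma^i}$ to the element of $A^i$ whose $(\alpha,\gamma)$-component equals $x_\gamma^\alpha$ when $\alpha \succ \gamma$. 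Since the sheaf in the $\alpha$-summand is constant, the differential $d^i$ has the simple form: its $(\alpha,\tau)$-component for $\tau \in \Sigma^{i+1}$, $\tau \prec \alpha$, is $\sum_{\gamma \prec \tau,\, \gamma \in \Sigma^i} \mathcal{O}(\gamma,\tau)\, x_\gamma^\alpha$.

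Next, I would verify that, for each $\tau \in \Sigma^{i+1}$, the tuple $\bigl(\sum_{\gamma \prec \tau} \mathcal{O}(\gamma,\tau)\, x_\gamma^\alpha\bigr)_{\alpha \succ \tau,\, \alpha \in \Sigma^d}$ lies in $H_{d,d-p}^{BM}(\Star{\tau};R)$. Applying \cref{prop:star_homology_cohomology} to $\Star{\tau}$, this requires checking, for each $\beta \in \Sigma^{d-1}$ with $\beta \succ \tau$, the identity
$$\sum_{\substack{\alpha \in \Sigma^d \\ \alpha \succ \beta}} \mathcal{O}(\beta,\alpha)\, \iota_{\alpha,\beta}\Bigl(\sum_{\substack{\gamma \in \Sigma^i \\ \gamma \prec \tau}} \mathcal{O}(\gamma,\tau)\, x_\gamma^\alpha\Bigr) = 0.$$
Swapping the order of summation, this reduces to showing that for each fixed $\gamma \prec \tau$ one has $\sum_{\alpha \succ \beta} \mathcal{O}(\beta,\alpha)\, \iota_{\alpha,\beta}(x_\gamma^\alpha) = 0$. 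Since $\gamma \preceq \tau \preceq \beta$, this is exactly the cycle condition for $x_\gamma$ evaluated at the face $\beta \succ \gamma$, which holds by assumption.

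Having shown that the restriction $\overline{d^i}$ of $d^i$ maps $\bigoplus_{\gamma \in \Sigma^i} H_{d,d-p}^{BM}(\Star{\gamma};R)$ into $\bigoplus_{\tau \in \Sigma^{i+1}} H_{d,d-p}^{BM}(\Star{\tau};R)$, the relation $\overline{d^{i+1}} \circ \overline{d^i} = 0$ is inherited verbatim from $(A^\bullet,d^\bullet)$, yielding the desired subcomplex. The only substantive step is the order-of-summation swap above; the rest is bookkeeping and poses no serious obstacle.
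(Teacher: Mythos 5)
Your proof is correct and follows the same route as the paper, which simply asserts that the containment $d^i\bigl(\oplus_{\gamma\in \Sigma^i} H_{d,d-p}^{BM}(\Star{\gamma};R)\bigr) \subseteq \oplus_{\kappa\in \Sigma^{i+1}} H_{d,d-p}^{BM}(\Star{\kappa};R)$ "follows from a direct computation"; your interchange of sums and reduction to the cycle condition for each $x_\gamma$ at $\beta \succ \gamma$ is exactly that computation, carried out explicitly. (Only cosmetic point: the codimension-one faces relevant for $\Star{\tau}$ are those $\beta \succeq \tau$, but your argument covers the case $\beta = \tau$ verbatim.)
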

\begin{proof}
	It suffices to show that, for each $i\geq 0$, 
	$$d^i\left(\oplus_{\gamma\in \Sigma^i} H_{d,d-p}^{BM}(\Star{\gamma};R)\right) \subseteq \oplus_{\kappa\in \Sigma^{i+1}} H_{d,d-p}^{BM}(\Star{\kappa};R).$$
	This follows from a direct computation.
\end{proof}

\begin{proposition}\label{prop:diagram_isomorphisms}
	For $(\Sigma,w)$ an $R$-balanced fan of dimension $d\geq 2$, there is a  commutative diagram
	% https://q.uiver.app/?q=WzAsMTAsWzAsMCwiMCJdLFswLDEsIjAiXSxbMSwxLCIgSF97ZCxkLXB9XntCTX0gKFxcU2lnbWE7UikiXSxbMiwxLCJcXGJpZ29wbHVzXFxsaW1pdHNfe1xcdGF1IFxcaW4gXFxTaWdtYV4xfSBIX3tkLGQtcH1ee0JNfSAoXFxTdGFye1xcdGF1fTtSKSJdLFszLDEsIlxcYmlnb3BsdXNcXGxpbWl0c197XFxzaWdtYSBcXGluIFxcU2lnbWFeMn1IX3tkLGQtcH1ee0JNfSAoXFxTdGFye1xcc2lnbWF9O1IpIl0sWzQsMSwiXFxjZG90cyJdLFsxLDAsIlxcRl9SXnAgKHYpIl0sWzIsMCwiXFxiaWdvcGx1c1xcbGltaXRzX3tcXHRhdSBcXGluIFxcU2lnbWFeMX0gXFxGX1JecCAoXFx0YXUpIl0sWzQsMCwiXFxjZG90cyJdLFszLDAsIlxcYmlnb3BsdXNcXGxpbWl0c197XFxzaWdtYSBcXGluIFxcU2lnbWFeMn0gXFxGX1JecCAoXFxzaWdtYSkiXSxbMSwyXSxbMiwzLCJcXG9wbHVzX1xcYWxwaGEgXFxvdmVybGluZXtkX1xcYWxwaGFeMH0iXSxbMyw0LCJcXG9wbHVzX1xcYWxwaGEgXFxvdmVybGluZXtkX1xcYWxwaGFeMX0iXSxbNiwyLCJcXGZyb3duIFtcXFNpZ21hLHddIl0sWzAsNl0sWzYsNywiXFxkZWx0YV4wIl0sWzcsMywiXFxvcGx1c19cXHRhdSBcXGZyb3duIFtcXFN0YXJ7XFx0YXV9LHddIl0sWzQsNSwiXFxvcGx1c19cXGFscGhhIFxcb3ZlcmxpbmV7ZF9cXGFscGhhXnsyfX0iXSxbNyw5LCJcXGRlbHRhXjEiXSxbOSw4LCJcXGRlbHRhXjIiXSxbOSw0LCJcXG9wbHVzX1xcc2lnbWEgXFxmcm93biBbXFxTdGFye1xcc2lnbWF9LHddIl1d&macro_url=https%3A%2F%2Fgist.githubusercontent.com%2Fedvardak%2F614cf46306796e33888b675f3c7a5a27%2Fraw%2F7e35b5b28f23321663ec77918f83fdcf66c5599c%2Fgistfile1.txt
	\begin{equation}\label{diag:goal}
		\begin{tikzcd}[column sep= small]
			0 & {\F_R^p (v)} & {\bigoplus\limits_{\tau \in \Sigma^1} \F_R^p (\tau)} & {\bigoplus\limits_{\sigma \in \Sigma^2} \F_R^p (\sigma)} & \cdots \\
			0 & { H_{d,d-p}^{BM} (\Sigma;R)} & {\bigoplus\limits_{\tau \in \Sigma^1} H_{d,d-p}^{BM} (\Star{\tau};R)} & {\bigoplus\limits_{\sigma \in \Sigma^2}H_{d,d-p}^{BM} (\Star{\sigma};R)} & \cdots
			\arrow[from=2-1, to=2-2]
			\arrow["{\oplus_\alpha \overline{d_\alpha^0}}", from=2-2, to=2-3]
			\arrow["{\oplus_\alpha \overline{d_\alpha^1}}", from=2-3, to=2-4]
			\arrow["{\frown [\Sigma,w]}", from=1-2, to=2-2]
			\arrow[from=1-1, to=1-2]
			\arrow["{\delta^0}", from=1-2, to=1-3]
			\arrow["{\oplus_\tau \frown [\Star{\tau},w]}", from=1-3, to=2-3]
			\arrow["{\oplus_\alpha \overline{d_\alpha^{2}}}", from=2-4, to=2-5]
			\arrow["{\delta^1}", from=1-3, to=1-4]
			\arrow["{\delta^2}", from=1-4, to=1-5]
			\arrow["{\oplus_\sigma \frown [\Star{\sigma},w]}", from=1-4, to=2-4]
		\end{tikzcd}
	\end{equation}
	with all the vertical maps being injective, where the upper row is given by the complex $(C_c^\bullet(\Sigma,\F_p^R), \delta^\bullet)$, and the lower row is the complex $(\oplus_{\gamma \in \Sigma^\bullet} H_{d,d-p}^{BM}(\Star{\gamma};R), \overline{d^\bullet})$ from \cref{prop:lower_row_is_complex}.
\end{proposition}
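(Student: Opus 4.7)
The plan is to build the diagram in two stages: first identify each vertical cap-product arrow and prove that it is injective, and then verify commutativity of each square. For any $\gamma \in \Sigma$, the star $\Star{\gamma}$ inherits an $R$-balanced structure from $(\Sigma, w)$ by \cref{prop:stars_are_balanced}, so the cap product with its fundamental class is defined. Using the identification $H^0(\Star{\gamma}, \F_R^p) \cong \F_R^p(\gamma)$ from \cref{prop:star_homology_cohomology}, this gives a map $\F_R^p(\gamma) \to H_{d,d-p}^{BM}(\Star{\gamma}; R)$, and the vertical arrows of the diagram are their direct sums over $\gamma \in \Sigma^i$. Since $\Star{v} = \Sigma$, the leftmost arrow coincides with $\frown [\Sigma, w]$ as stated. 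Each of these cap products is injective by \cref{prop:cap_injective} applied to the $R$-balanced fan $\Star{\gamma}$, and direct sums preserve injectivity, so every vertical arrow in the diagram is injective.

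For commutativity, fix $\gamma \in \Sigma^i$ and $u_\gamma \in \F_R^p(\gamma)$, and trace the two images landing in the $\kappa$-summand at the bottom for a given $\kappa \in \Sigma^{i+1}$; both images vanish unless $\kappa \succ \gamma$, which we now assume. Going down first, \cref{prop:cap_product_on_stars} produces
\[
(\frown [\Star{\gamma}, w])(u_\gamma) = \bigl( \rho_{\gamma,\alpha}(u_\gamma) \,\lrcorner\, w(\alpha)\Lambda_\alpha \bigr)_{\alpha \succ \gamma}
\]
at the $\gamma$-spot of the lower row. The differential $\overline{d^i}$ is the restriction of the coboundary in $A^\bullet$, which is the direct sum over $\alpha \in \Sigma^d$ of the constant-coefficient compact-support coboundaries on $\Cone{\alpha}$; on each $\alpha$-summand, the map from the $\gamma$-spot to the $\kappa$-spot is simply $\mathcal{O}(\gamma, \kappa)\,\mathrm{id}$. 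Consequently, the image in the $\alpha$-component of the $\kappa$-summand (for $\alpha \succ \kappa$) equals $\mathcal{O}(\gamma, \kappa) \,\rho_{\gamma, \alpha}(u_\gamma) \,\lrcorner\, w(\alpha) \Lambda_\alpha$. Going right first, $\delta^i$ places $\mathcal{O}(\gamma, \kappa) \,\rho_{\gamma, \kappa}(u_\gamma)$ in the $\kappa$-summand of the upper row, and then $\frown [\Star{\kappa}, w]$ produces $\mathcal{O}(\gamma, \kappa) \,\rho_{\kappa, \alpha}(\rho_{\gamma, \kappa}(u_\gamma)) \,\lrcorner\, w(\alpha) \Lambda_\alpha$ in the $\alpha$-component. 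The two expressions agree by functoriality of the restriction maps, $\rho_{\kappa, \alpha} \circ \rho_{\gamma, \kappa} = \rho_{\gamma, \alpha}$.

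The main obstacle is the bookkeeping required to realize $\bigoplus_{\gamma \in \Sigma^i} H_{d,d-p}^{BM}(\Star{\gamma}; R)$ cleanly as a subcomplex of $A^\bullet$ after rearranging the double sum $\bigoplus_\alpha \bigoplus_{\gamma \prec \alpha} \F_{d-p}^R(\alpha)$ by $\gamma$ rather than by $\alpha$, and to confirm that the orientation signs coming from the coboundary of $A^\bullet$ agree with the sign $\mathcal{O}(\gamma, \kappa)$ appearing in $\delta^i$; both are inherited from the ambient cellular structure of $\Sigma$, so they coincide. Once these identifications are in place, the commutativity reduces to the one-line application of functoriality above, and injectivity of all vertical arrows is an immediate consequence of \cref{prop:cap_injective}.
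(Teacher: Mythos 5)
Your proposal is correct and follows essentially the same route as the paper: chain-level cap products with the fundamental chains of the stars, identified via \cref{prop:star_homology_cohomology} and \cref{prop:cap_product_on_stars} as landing in $H_{d,d-p}^{BM}(\Star{\gamma};R)\subseteq A^\bullet$, commutativity checked by a direct componentwise computation (the paper does the same calculation on the ambient diagram with lower row $A^\bullet$ and then restricts, using $R$-linearity of the contraction where you invoke $\rho_{\kappa,\alpha}\circ\rho_{\gamma,\kappa}=\rho_{\gamma,\alpha}$), and injectivity from \cref{prop:cap_injective} since direct sums of injective maps are injective.
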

\begin{proof}
	First, we wish to show that the following diagram is commutative:
	% https://q.uiver.app/?q=WzAsMTAsWzAsMCwiMCJdLFswLDEsIjAiXSxbMSwxLCJcXGJpZ29wbHVzX3tcXGFscGhhIFxcaW4gXFxTaWdtYV5kfSBcXEZfe2QtcH1eUihcXGFscGhhKSJdLFsyLDEsIlxcYmlnb3BsdXNcXGxpbWl0c197XFx0YXUgXFxpbiBcXFNpZ21hXjF9XFxiaWdvcGx1c1xcbGltaXRzX3tcXHN1YnN0YWNre1xcYWxwaGEgXFxpbiBcXFNpZ21hXmQgXFxcXCBcXGFscGhhIFxcc3VjYyBcXHRhdX19IFxcRl97ZC1wfV5SKFxcYWxwaGEpIl0sWzMsMSwiXFxiaWdvcGx1c1xcbGltaXRzX3tcXHNpZ21hIFxcaW4gXFxTaWdtYV4yfVxcYmlnb3BsdXNcXGxpbWl0c197XFxzdWJzdGFja3tcXGFscGhhIFxcaW4gXFxTaWdtYV5kIFxcXFwgXFxhbHBoYSBcXHN1Y2MgXFxzaWdtYX19IFxcRl97ZC1wfV5SKFxcYWxwaGEpIl0sWzQsMSwiXFxjZG90cyJdLFsxLDAsIlxcRl9SXnAgKHYpIl0sWzIsMCwiXFxiaWdvcGx1c1xcbGltaXRzX3tcXHRhdSBcXGluIFxcU2lnbWFeMX0gXFxGX1JecCAoXFx0YXUpIl0sWzMsMCwiXFxiaWdvcGx1c1xcbGltaXRzX3tcXHNpZ21hIFxcaW4gXFxTaWdtYV4yfSBcXEZfUl5wIChcXHNpZ21hKSJdLFs0LDAsIlxcY2RvdHMiXSxbMSwyXSxbMiwzLCJcXG9wbHVzX1xcYWxwaGEgZF9cXGFscGhhXjAiXSxbMyw0LCJcXG9wbHVzX1xcYWxwaGEgZF9cXGFscGhhXjEiXSxbNCw1LCJcXG9wbHVzX1xcYWxwaGEgZF9cXGFscGhhXnsyfSJdLFs2LDIsIlxcZnJvd24gXFxtYXRocm17Q2h9KFxcU2lnbWEsdykiXSxbMCw2XSxbNiw3LCJcXGRlbHRhXjAiXSxbNyw4LCJcXGRlbHRhXjEiXSxbOCw5LCJcXGRlbHRhXjIiXSxbOCw0LCJcXG9wbHVzX1xcc2lnbWEgXFxmcm93biBcXG1hdGhybXtDaH0oXFxTdGFye1xcc2lnbWF9LHcpIl0sWzcsMywiXFxvcGx1c19cXHRhdSBcXGZyb3duIFxcbWF0aHJte0NofShcXFN0YXJ7XFx0YXV9LHcpIl1d&macro_url=https%3A%2F%2Fgist.githubusercontent.com%2Fedvardak%2F614cf46306796e33888b675f3c7a5a27%2Fraw%2F7e35b5b28f23321663ec77918f83fdcf66c5599c%2Fgistfile1.txt
	\[\begin{tikzcd}[column sep = small]
		0 & {\F_R^p (v)} & {\bigoplus\limits_{\tau \in \Sigma^1} \F_R^p (\tau)} & {\bigoplus\limits_{\sigma \in \Sigma^2} \F_R^p (\sigma)} & \cdots \\
		0 & {\bigoplus_{\alpha \in \Sigma^d} \F_{d-p}^R(\alpha)} & {\bigoplus\limits_{\tau \in \Sigma^1}\bigoplus\limits_{\substack{\alpha \in \Sigma^d \\ \alpha \succ \tau}} \F_{d-p}^R(\alpha)} & {\bigoplus\limits_{\sigma \in \Sigma^2}\bigoplus\limits_{\substack{\alpha \in \Sigma^d \\ \alpha \succ \sigma}} \F_{d-p}^R(\alpha)} & \cdots.
		\arrow[from=2-1, to=2-2]
		\arrow["{\oplus_\alpha d_\alpha^0}", from=2-2, to=2-3]
		\arrow["{\oplus_\alpha d_\alpha^1}", from=2-3, to=2-4]
		\arrow["{\oplus_\alpha d_\alpha^{2}}", from=2-4, to=2-5]
		\arrow["{\frown \mathrm{Ch}(\Sigma,w)}", from=1-2, to=2-2]
		\arrow[from=1-1, to=1-2]
		\arrow["{\delta^0}", from=1-2, to=1-3]
		\arrow["{\delta^1}", from=1-3, to=1-4]
		\arrow["{\delta^2}", from=1-4, to=1-5]
		\arrow["{\oplus_\sigma \frown \mathrm{Ch}(\Star{\sigma},w)}", from=1-4, to=2-4]
		\arrow["{\oplus_\tau \frown \mathrm{Ch}(\Star{\tau},w)}", from=1-3, to=2-3]
	\end{tikzcd}\]
	The upper row is the compact support complex $(C_c^\bullet (\Sigma, \F_R^p), \delta^\bullet)$ for the $\F_R^p$ cohomology (see \cref{def:cochain_group}). The lower row is the complex $(A^\bullet, d^\bullet)$ from \eqref{eq:A_cocomplex},
	where the order of indexing is changed for clarity in relation to the cap morphism. 

	The first vertical map in diagram \eqref{diag:goal} is given by the cap product on the chain level of $(\Sigma,w)$, as in \cref{def:cap_product}. For the $r$-th column, the vertical map is given as the direct sum over all $\gamma \in \Sigma^r$ of the maps:
	\begin{align*}
		\frown \mathrm{Ch}(\Star{\gamma},w) \colon \F_R^p (\gamma) &\to \oplus_{\substack{\alpha \in \Sigma^d \\ \alpha \succ \gamma}} \F_{d-p}^R(\alpha) \\
		v &\mapsto  \left(  v_\gamma \, \lrcorner \, w(\alpha)\Lambda_\alpha \right)_{\substack{\alpha \in \Sigma^d \\ \alpha \succ \gamma}}.
	\end{align*}

	To obtain commutativity of the described diagram, we select one square and show commutativity there:
	% https://q.uiver.app/?q=WzAsNCxbMCwwLCJcXGJpZ29wbHVzXFxsaW1pdHNfe1xcZ2FtbWEgXFxpbiBcXFNpZ21hXnJ9IFxcRl9SXnAgKFxcZ2FtbWEpIl0sWzEsMCwiXFxiaWdvcGx1c1xcbGltaXRzX3tcXGthcHBhIFxcaW4gXFxTaWdtYV57cisxfX0gXFxGX1JecCAoXFxrYXBwYSkiXSxbMCwxLCJcXGJpZ29wbHVzXFxsaW1pdHNfe1xcZ2FtbWEgXFxpbiBcXFNpZ21hXnJ9XFxiaWdvcGx1c1xcbGltaXRzX3tcXHN1YnN0YWNre1xcYWxwaGEgXFxpbiBcXFNpZ21hXmQgXFxcXCBcXGFscGhhIFxcc3VjYyBcXGdhbW1hfX0gXFxGX3tkLXB9XlIoXFxhbHBoYSkiXSxbMSwxLCJcXGJpZ29wbHVzXFxsaW1pdHNfe1xca2FwcGEgXFxpbiBcXFNpZ21hXntyKzF9fVxcYmlnb3BsdXNcXGxpbWl0c197XFxzdWJzdGFja3tcXGFscGhhIFxcaW4gXFxTaWdtYV5kIFxcXFwgXFxhbHBoYSBcXHN1Y2MgXFxrYXBwYX19IFxcRl97ZC1wfV5SKFxcYWxwaGEpIl0sWzIsMywiXFxvcGx1c19cXGFscGhhIGRfXFxhbHBoYV5yIl0sWzAsMiwiXFxvcGx1c19cXGdhbW1hIFxcZnJvd24gXFxtYXRocm17Q2h9KFxcU3RhcntcXGdhbW1hfSx3KSJdLFsxLDMsIlxcb3BsdXNfXFxrYXBwYSBcXGZyb3duIFxcbWF0aHJte0NofShcXFN0YXJ7XFxrYXBwYX0sdykiXSxbMCwxLCJcXGRlbHRhXnIiXV0=&macro_url=https%3A%2F%2Fgist.githubusercontent.com%2Fedvardak%2F614cf46306796e33888b675f3c7a5a27%2Fraw%2F7e35b5b28f23321663ec77918f83fdcf66c5599c%2Fgistfile1.txt
	\begin{equation}\label{diag:commutative_rows}\begin{tikzcd}
		{\bigoplus\limits_{\gamma \in \Sigma^r} \F_R^p (\gamma)} & {\bigoplus\limits_{\kappa \in \Sigma^{r+1}} \F_R^p (\kappa)} \\
		{\bigoplus\limits_{\gamma \in \Sigma^r}\bigoplus\limits_{\substack{\alpha \in \Sigma^d \\ \alpha \succ \gamma}} \F_{d-p}^R(\alpha)} & {\bigoplus\limits_{\kappa \in \Sigma^{r+1}}\bigoplus\limits_{\substack{\alpha \in \Sigma^d \\ \alpha \succ \kappa}} \F_{d-p}^R(\alpha)}
		\arrow["{\oplus_\alpha d_\alpha^r}", from=2-1, to=2-2]
		\arrow["{\oplus_\gamma \frown \mathrm{Ch}(\Star{\gamma},w)}", from=1-1, to=2-1]
		\arrow["{\oplus_\kappa \frown \mathrm{Ch}(\Star{\kappa},w)}", from=1-2, to=2-2]
		\arrow["{\delta^r}", from=1-1, to=1-2]
	\end{tikzcd}\end{equation}
	For $v= (v_\gamma)_{\gamma \in \Sigma^r} \in \bigoplus_{\gamma \in \Sigma^r} \F_R^p (\gamma)$, we can expand the definitions for the right then down composition to get
	\begin{align*}
		((\oplus_\kappa \frown \mathrm{Ch}(\Star{\kappa},w)\circ \delta^r) (v) &= (\oplus_\kappa \frown \mathrm{Ch}(\Star{\kappa},w) ) \left( \left(\textstyle\sum_{\gamma \prec \kappa} \mathcal{O}(\gamma, \kappa) v_\gamma \right)_{\kappa \in \Sigma^{r+1}}  \right) \\
		&= \left(   (\textstyle\sum_{\gamma \prec \kappa} \mathcal{O}(\gamma, \kappa) v_\gamma )\, \lrcorner \, w(\alpha)\Lambda_\alpha  \right)_{\substack{\kappa \in \Sigma^{r+1} \\ \alpha \in \Sigma^{d} \\ \alpha \succ \kappa}}.
	\end{align*}
	For the down then right composition, we get
	\begin{align*}
		((\oplus_{\alpha} d_\alpha^r) \circ ({\oplus_\gamma \frown \mathrm{Ch}(\Star{\gamma}},w))) (v) &= (\oplus_{\alpha} d_\alpha^r) \left( \left( w(\alpha)  v_\gamma \, \lrcorner\, \Lambda_\alpha \right)_{\substack{\gamma \in \Sigma^{r+1}, \; \alpha \in \Sigma^d \\ \alpha \succ \gamma}}  \right) \\
		&= \left(  \textstyle\sum_{\gamma \prec \kappa} \mathcal{O}(\gamma, \kappa)    (v_\gamma \,\lrcorner\, w(\alpha) \Lambda_\alpha)  \right)_{\substack{\kappa \in \Sigma^{r+1} \\ \alpha \in \Sigma^{d} \\ \alpha \succ \kappa}} .
	\end{align*}
	Comparing the two above equations, diagram \eqref{diag:commutative_rows} is commutative since the contraction $\lrcorner\, w(\alpha)\Lambda_\alpha $ is $R$-linear.

	Lastly, we we wish to show injectivity of the vertical maps, when restricting to the Borel--Moore homology groups. By \cref{prop:cap_product_on_stars} and \cref{prop:star_homology_cohomology}, for each $\kappa \in \Sigma$, we have that 
	$$H_d^{BM}(\Star{\kappa}, \F_{d-p}^R) \cong \ker \left(\oplus_{\substack{\alpha \in \Sigma^d \\ \alpha \succ \kappa}} \F_{d-p}^R(\alpha) \to \oplus_{\substack{\beta \in \Sigma^{d-1} \\ \beta \succ \kappa}} \F_{d-p}^R(\beta) \right),$$
	and the given formulas for the maps $\oplus_{\kappa \in \Sigma^{r+1}} \frown \mathrm{Ch}(\Star{\kappa},w)$ correspond exactly to the cap products in homology
	$$\oplus_{\kappa} \frown [\Star{\kappa},w] \colon \bigoplus_{\kappa \in \Sigma^{r+1}} \F_R^p (\kappa) \to \bigoplus\limits_{\kappa \in \Sigma^{r+1}} H_{d}^{BM}(\Star{\kappa}, \F_{d-p}^R).$$
	We have the following diagram when only considering the images
	% https://q.uiver.app/?q=WzAsMTAsWzAsMCwiMCJdLFswLDEsIjAiXSxbMSwxLCIgSF97ZCxkLXB9XntCTX0gKFxcU2lnbWE7UikiXSxbMiwxLCJcXGJpZ29wbHVzXFxsaW1pdHNfe1xcdGF1IFxcaW4gXFxTaWdtYV4xfSBIX3tkLGQtcH1ee0JNfSAoXFxTdGFye1xcdGF1fTtSKSJdLFszLDEsIlxcYmlnb3BsdXNcXGxpbWl0c197XFxzaWdtYSBcXGluIFxcU2lnbWFeMn1IX3tkLGQtcH1ee0JNfSAoXFxTdGFye1xcc2lnbWF9O1IpIl0sWzQsMSwiXFxjZG90cyJdLFsxLDAsIlxcRl9SXnAgKHYpIl0sWzIsMCwiXFxiaWdvcGx1c1xcbGltaXRzX3tcXHRhdSBcXGluIFxcU2lnbWFeMX0gXFxGX1JecCAoXFx0YXUpIl0sWzQsMCwiXFxjZG90cyJdLFszLDAsIlxcYmlnb3BsdXNcXGxpbWl0c197XFxzaWdtYSBcXGluIFxcU2lnbWFeMn0gXFxGX1JecCAoXFxzaWdtYSkiXSxbMSwyXSxbMiwzLCJcXG9wbHVzX1xcYWxwaGEgXFxvdmVybGluZXtkX1xcYWxwaGFeMH0iXSxbMyw0LCJcXG9wbHVzX1xcYWxwaGEgXFxvdmVybGluZXtkX1xcYWxwaGFeMX0iXSxbNiwyLCJcXGZyb3duIFtcXFNpZ21hLHddIl0sWzAsNl0sWzYsNywiXFxkZWx0YV4wIl0sWzcsMywiXFxvcGx1c19cXHRhdSBcXGZyb3duIFtcXFN0YXJ7XFx0YXV9LHddIl0sWzQsNSwiXFxvcGx1c19cXGFscGhhIFxcb3ZlcmxpbmV7ZF9cXGFscGhhXnsyfX0iXSxbNyw5LCJcXGRlbHRhXjEiXSxbOSw4LCJcXGRlbHRhXjIiXSxbOSw0LCJcXG9wbHVzX1xcc2lnbWEgXFxmcm93biBbXFxTdGFye1xcc2lnbWF9LHddIl1d&macro_url=https%3A%2F%2Fgist.githubusercontent.com%2Fedvardak%2F614cf46306796e33888b675f3c7a5a27%2Fraw%2F7e35b5b28f23321663ec77918f83fdcf66c5599c%2Fgistfile1.txt
	\begin{equation}
		\begin{tikzcd}[column sep= small]
			0 & {\F_R^p (v)} & {\bigoplus\limits_{\tau \in \Sigma^1} \F_R^p (\tau)} & {\bigoplus\limits_{\sigma \in \Sigma^2} \F_R^p (\sigma)} & \cdots \\
			0 & { H_{d,d-p}^{BM} (\Sigma;R)} & {\bigoplus\limits_{\tau \in \Sigma^1} H_{d,d-p}^{BM} (\Star{\tau};R)} & {\bigoplus\limits_{\sigma \in \Sigma^2}H_{d,d-p}^{BM} (\Star{\sigma};R)} & \cdots
			\arrow[from=2-1, to=2-2]
			\arrow["{\oplus_\alpha \overline{d_\alpha^0}}", from=2-2, to=2-3]
			\arrow["{\oplus_\alpha \overline{d_\alpha^1}}", from=2-3, to=2-4]
			\arrow["{\frown [\Sigma,w]}", from=1-2, to=2-2]
			\arrow[from=1-1, to=1-2]
			\arrow["{\delta^0}", from=1-2, to=1-3]
			\arrow["{\oplus_\tau \frown [\Star{\tau},w]}", from=1-3, to=2-3]
			\arrow["{\oplus_\alpha \overline{d_\alpha^{2}}}", from=2-4, to=2-5]
			\arrow["{\delta^1}", from=1-3, to=1-4]
			\arrow["{\delta^2}", from=1-4, to=1-5]
			\arrow["{\oplus_\sigma \frown [\Star{\sigma},w]}", from=1-4, to=2-4]
		\end{tikzcd}
	\end{equation}
	where the cochain differentials in the lower row have been restricted.

	These vertical maps are the direct sum of cap products, so by \cref{prop:cap_injective}, are injective.
\end{proof}

% The above proposition allows us to compare the cap product on the whole fan with the cap products on the stars of faces, for the $q=0$ case. This is particularly interesting when we require the fan not to have Borel--Moore homology for $q\neq d$.

\begin{proposition}\label{prop:lower_row_exact}
	Let $\Sigma$ be a fan of dimension $d\geq 2$ such that $H_q^{BM}(\Star{\gamma}, \F_p^R)=0$, for $q\not = d$ and all $p$, for each face $\gamma \in \Sigma$. Then the complex $(\oplus_{\gamma \in \Sigma^\bullet} H_{d,d-p}^{BM}(\Star{\gamma};R), \overline{d^\bullet})$ in \cref{prop:lower_row_is_complex} is exact except in the rightmost position.
\end{proposition}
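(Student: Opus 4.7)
The plan is to introduce a bicomplex whose total cohomology can be computed via two different filtrations: one will identify it with $H^\bullet(B^\bullet, \overline{d^\bullet})$ (where I write $B^\bullet = \bigoplus_{\gamma \in \Sigma^\bullet} H_{d,d-p}^{BM}(\Star{\gamma};R)$), and the other will exploit the acyclicity from \cref{lemma:constant_sheaf_acyclic} to reduce everything to a single nonzero contribution from the vertex.

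Specifically, I would consider the first-quadrant double complex
\[
E^{i,j} = \bigoplus_{\substack{\gamma \in \Sigma^i,\ \kappa \in \Sigma^{d-j} \\ \gamma \preceq \kappa}} \F_{d-p}^R(\kappa),
\]
with horizontal differential $d_h\colon E^{i,j} \to E^{i+1,j}$ induced by the cellular cochain differential of $\Sigma$ acting on $\gamma$ (the module $\F_{d-p}^R(\kappa)$ stays fixed as $\gamma$ varies through the faces of a fixed $\kappa$), and vertical differential $d_v\colon E^{i,j} \to E^{i,j+1}$ induced by the non-subdivided Borel--Moore chain differential of $\Star{\gamma}$ from \cref{prop:star_homology_cohomology} acting on $\kappa$ (via the cosheaf maps $\iota_{\kappa,\kappa'}$). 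With appropriate signs these anticommute, yielding a genuine bicomplex.

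For the column filtration, the $i$-th column $E^{i,\bullet}$ decomposes as a direct sum over $\gamma \in \Sigma^i$ of the non-subdivided Borel--Moore complex of $\Star{\gamma}$ described in \cref{prop:star_homology_cohomology}. The hypothesis $H_q^{BM}(\Star{\gamma},\F_{d-p}^R)=0$ for $q\neq d$ forces each column to have cohomology concentrated at $j=0$ with value $H_d^{BM}(\Star{\gamma},\F_{d-p}^R)$, so $E_1^{i,0}=B^i$ and $E_1^{i,j}=0$ for $j>0$. The induced $d_1$-differential along the bottom row is exactly $\overline{d^\bullet}$, so the spectral sequence collapses at $E_2$ and identifies total cohomology with $H^\bullet(B^\bullet,\overline{d^\bullet})$. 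For the row filtration, the $j$-th row is $\bigoplus_{\kappa \in \Sigma^{d-j}} C_c^\bullet(\Cone{\kappa},\F_{d-p}^R(\kappa)_{\Cone{\kappa}})$, a direct sum of compact-support cochain complexes with constant sheaf coefficients. When $j \leq d-1$, every contributing $\kappa$ satisfies $\dim\kappa \geq 1$, hence $\kappa\neq v$, and \cref{lemma:constant_sheaf_acyclic} gives vanishing row cohomology. When $j=d$, only $\kappa = v$ contributes and $\Cone{v}=\{v\}$ produces a single term $\F_{d-p}^R(v)$ at $i=0$. Thus this spectral sequence is concentrated at $(0,d)$ with value $\F_{d-p}^R(v)$, and the total cohomology is $\F_{d-p}^R(v)$ in degree $d$ and zero elsewhere.

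Comparing the two computations yields $H^n(B^\bullet,\overline{d^\bullet})=0$ for $n<d$ (with the incidental information that $H^d \cong \F_{d-p}^R(v)$), which is precisely exactness of $B^\bullet$ away from the rightmost position $n=d$. The main obstacle is the bookkeeping of signs and orientations needed to verify that $d_h$ and $d_v$ genuinely anticommute and that the row/column decompositions cleanly match the complexes of \cref{prop:star_homology_cohomology} and \cref{lemma:constant_sheaf_acyclic}; once that is in place, the spectral sequence comparison is routine.
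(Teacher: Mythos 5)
Your proposal is correct and follows essentially the same route as the paper: the double complex you define is exactly the paper's $K_{\bullet,\bullet}$ (up to indexing and the commuting-versus-anticommuting sign convention), and the paper likewise runs the two spectral sequences, using \cref{lemma:constant_sheaf_acyclic} on the rows to get total (co)homology $\F_{d-p}^R(v)$ concentrated in degree $d$, and the vanishing hypothesis together with \cref{prop:star_homology_cohomology} on the columns to identify the $E_1$ page with the bottom-row complex $(\oplus_{\gamma \in \Sigma^\bullet} H_{d,d-p}^{BM}(\Star{\gamma};R), \overline{d^\bullet})$.
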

\begin{proof}
	We will now construct a double complex $K_{\bullet,\bullet}$, which corresponds to the Cartan-Eilenberg resolution of diagram \eqref{diag:goal}, using that these are the homology groups of the complexes $C_\bullet^{BM}(\Star{\gamma}, \F_{d-p}^R)$. Then, we will use the two spectral sequences converging to the homology of the total complex $H_\bullet (\mathrm{Tot}(K_{\bullet, \bullet}))$ to deduce that the complex is exact except in the rightmost position.
		
	Let $(K_{\bullet,\bullet},d_\wedge^0, d_>^0)$ be the first-quadrant double complex given by 
	$$K_{r,s} = \bigoplus_{\kappa \in \Sigma^{r}} \bigoplus_{\substack{\gamma \in \Sigma^{d-s} \\ \gamma \succeq \kappa }} \F_{d-p}^R(\gamma),$$
	for $r\geq 0$, $s\geq 0$. 
	%, see \cref 
	Since all the indices used are relating to the dimensions of particular faces of the fan $\Sigma$, this is a first-quadrant double complex.

	The vertical differential $(d_\wedge^0)_{r,s} \colon K_{r,s}\to K_{r,s+1}$ is the direct sum over the differentials $\partial_\bullet^\kappa$ of the chain complex for tropical Borel--Moore homology on the star $\Star{\kappa}$ for each face $\kappa \in \Sigma^r$, i.e. $(d_\wedge^0)_{r,s} = \oplus_{\kappa\in \Sigma^r} \partial_{d-s}^\kappa$ from \cref{prop:star_homology_cohomology}.

	The horizontal differential $(d_>^0)_{r,s} \colon K_{r,s}\to K_{r+1,s}$ is the direct sum over the differentials $d_\gamma^{\bullet}$ in the complex of cochains of compact support for the constant sheaf taking value $\F_{d-p}^R(\gamma)$ on the cone $\Cone{\gamma}$, truncated in degree $1$, for each face $\gamma \in \Sigma^{d-s}$. Explicitly, $(d_>^0)_{r,s} = \oplus_{\gamma \in \Sigma^{d-s}} d_\gamma^{r}$, where $d_\gamma^\bullet$ comes from the complex:
	\begin{equation*}\begin{tikzcd}[column sep = small]
		0 & {\F_{d-p}^R(\gamma)} & {\bigoplus\limits_{\substack{\tau \in \Sigma^1 \\ \gamma \succ \tau}}\F_{d-p}^R(\gamma)} & \dots & {\bigoplus\limits_{\substack{\kappa \in \Sigma^{s-1} \\ \gamma \succ \kappa}}\F_{d-p}^R(\gamma)} & {\F_{d-p}(\gamma)} & 0,
		\arrow[from=1-1, to=1-2]
		\arrow["{d_\gamma^0}", from=1-2, to=1-3]
		\arrow["{d_\gamma^{s-1}}", from=1-5, to=1-6]
		\arrow[from=1-6, to=1-7]
		\arrow["{d_\gamma^{1}}", from=1-3, to=1-4]
		\arrow["{d_\gamma^{s-2}}", from=1-4, to=1-5]
	\end{tikzcd}\end{equation*}
	from \cref{prop:star_homology_cohomology}.

	We have that $d_\wedge^0 \circ d_\wedge^0 = 0$ and $d_>^0 \circ d_>^0 = 0$ since both are direct sums of differentials of complexes.
	Moreover, we have $d_>^0 \circ d_\wedge^0 = d_\wedge^0 \circ d_>^0$ which can be checked directly.

	Now, since $K_{\bullet, \bullet}$ is a double complex, we can look at the two associated spectral sequences converging to the homology of the total complex $(\Tot(K_{\bullet, \bullet}), d_{\text{T}})$ given by $\Tot(K_{\bullet, \bullet})_m = \prod_{r+s=m} K_{r,s}$ with differential $d_{\text{T}} = d_> + d_\wedge$. We refer to \cite[Chapter~5.6]{Weibel} for details. 

	First, we take the spectral sequence $E^r$, with $E^0 = K_{\bullet,\bullet}$ and the first differential $d^0$ being the horizontal differential $d_>^0$ of $K_{\bullet, \bullet}$, which is equivalent to computing the homology row by row.
	Since the rows $K_{\bullet, s}$ are the complexes $\bigoplus_{\gamma \in \Sigma^{d-s}} C_c^\bullet (\Cone{\gamma}, F(\gamma)_{\Cone{\gamma}})$ with $F(\gamma)\coloneqq \F_{d-p}^R(\gamma)$, observing that this is merely the reduced $F(\gamma)$-cohomology of a polytope over which $\gamma$ is a cone, gives
	\begin{equation*}
			H_k (K_{\bullet, s}, d_>^0) \cong 0
	\end{equation*}
	for each $s \neq d$. In degree $d$, we have $H_k (K_{\bullet, d}, d_>^0) \cong \F_{d-p}^R(v)$ for $k=0$ and $H_k (K_{\bullet, d}, d_>^0)=0$ otherwise.
	Thus, the $E^1$ page  becomes
	$$E_{r,s}^1 \cong \begin{cases*} \F_{d-p}^R(v) & $r=0$ and $s=d,$  \\ 0 &otherwise. \end{cases*}$$
	There are now no further non-zero differentials of the spectral sequence, so the $E^1$ page is the $E^\infty$ page. In particular, we conclude that 
	\begin{equation}\label{eq:tot_homology}
		H_q(\Tot(K_{\bullet, \bullet})_\bullet)=
		\begin{cases*}
		\F_{d-p}^R (v) & for $q=d$, \\
		0 &otherwise.
		\end{cases*}
	\end{equation}

	Next, we consider the spectral sequence $\overline{E^r}$, with $\overline{E^0} = K_{\bullet,\bullet}$ and the first differential $d^0$ being the vertical differential $d_\wedge^0$ of $K_{\bullet, \bullet}$. Taking this differential is therefore equivalent to computing the homology column by column. 
	The $r$-th column is the direct sum over each $\gamma \in \Sigma^r$ of the complex
	% https://q.uiver.app/?q=WzAsNyxbMCwwLCIwIl0sWzEsMCwiXFxiaWdvcGx1c1xcbGltaXRzX3tcXHN1YnN0YWNre1xcYWxwaGEgXFxpbiBcXFNpZ21hXmQgXFxcXCBcXGFscGhhIFxcc3VjYyBcXGdhbW1hfX0gXFxGX3tkLXB9XlIoXFxhbHBoYSkiXSxbMiwwLCJcXGJpZ29wbHVzXFxsaW1pdHNfe1xcc3Vic3RhY2t7XFxiZXRhIFxcaW4gXFxTaWdtYV57ZC0xfSBcXFxcIFxcYmV0YSBcXHN1Y2MgXFxnYW1tYX19IFxcRl97ZC1wfV5SKFxcYmV0YSkiXSxbMywwLCJcXGRvdHMiXSxbNCwwLCJcXGJpZ29wbHVzXFxsaW1pdHNfe1xcc3Vic3RhY2t7XFxrYXBwYSBcXGluIFxcU2lnbWFee2wrMX0gXFxcXCBcXGthcHBhIFxcc3VjYyBcXGdhbW1hfX0gXFxGX3tkLXB9XlIoXFxrYXBwYSkiXSxbNSwwLCJcXEZfe2QtcH1eUihcXGdhbW1hKSJdLFs2LDAsIjAiXSxbMCwxXSxbMSwyLCJcXHBhcnRpYWxfbl5cXGdhbW1hIl0sWzIsMywiXFxwYXJ0aWFsX3tuLTF9XlxcZ2FtbWEiXSxbMyw0LCJcXHBhcnRpYWxfe2wrMn1eXFxnYW1tYSJdLFs0LDUsIlxccGFydGlhbF97bCsxfV5cXGdhbW1hIl0sWzUsNl1d
	\[\begin{tikzcd}[column sep = 0.82em]
		0 & {\bigoplus\limits_{\substack{\alpha \in \Sigma^d \\ \alpha \succ \gamma}} \F_{d-p}^R(\alpha)} & {\bigoplus\limits_{\substack{\beta \in \Sigma^{d-1} \\ \beta \succ \gamma}} \F_{d-p}^R(\beta)} & \dots & {\bigoplus\limits_{\substack{\kappa \in \Sigma^{l+1} \\ \kappa \succ \gamma}} \F_{d-p}^R(\kappa)} & {\F_{d-p}^R(\gamma)} & {0,}
		\arrow[from=1-1, to=1-2]
		\arrow["{\partial_n^\gamma}", from=1-2, to=1-3]
		\arrow["{\partial_{n-1}^\gamma}", from=1-3, to=1-4]
		\arrow["{\partial_{l+2}^\gamma}", from=1-4, to=1-5]
		\arrow["{\partial_{l+1}^\gamma}", from=1-5, to=1-6]
		\arrow[from=1-6, to=1-7]
	\end{tikzcd}\]
	each of which has the tropical Borel--Moore homology of the star $\Star{\gamma}$ by \cref{prop:star_homology_cohomology}. Since by assumption, $H_q^{BM}(\Star{\gamma}, \F_p^R)=0$ for $q\not = d$ and all $p$, for each face $\gamma \in \Sigma$, we find:
	\begin{equation*}
		H_k(K_{r,\bullet}, d_\wedge^\bullet) = \begin{cases}
			\oplus_{\kappa\in \Sigma^r} H_d^{BM}(\Star{\kappa}, \F_{d-p}^R) & \text{if $k=0$,}\\
						0 & \text{otherwise.}
					\end{cases}
	\end{equation*}

	Thus the $\overline{E^1}$ page has only the bottom row:
	% https://q.uiver.app/?q=WzAsNSxbMSwwLCJIX2Ree0JNfShcXFNpZ21hLFxcRl97ZC1wfV5SKSJdLFsyLDAsIlxcYmlnb3BsdXNcXGxpbWl0c197XFx0YXUgXFxpbiBcXFNpZ21hXjF9SF97ZH1ee0JNfShcXFN0YXJ7XFx0YXV9LCBcXEZfe2QtcH1eUikgIl0sWzMsMCwiXFxiaWdvcGx1c1xcbGltaXRzX3tcXHNpZ21hIFxcaW4gXFxTaWdtYV4yfUhfe2R9XntCTX0oXFxTdGFye1xcc2lnbWF9LCBcXEZfe2QtcH1eUikgIl0sWzQsMCwiXFxkb3RzIl0sWzAsMCwiMCJdLFswLDEsIlxcb3BsdXNfXFxhbHBoYVxcb3ZlcmxpbmV7ZF9cXGFscGhhXjB9Il0sWzEsMiwiXFxvcGx1c19cXGFscGhhXFxvdmVybGluZXtkX1xcYWxwaGFeMX0iXSxbMiwzLCJcXG9wbHVzX1xcYWxwaGFcXG92ZXJsaW5le2RfXFxhbHBoYV4yfSJdLFs0LDBdXQ==
	\[\begin{tikzcd}[column sep = small]
		0 & {H_d^{BM}(\Sigma,\F_{d-p}^R)} & {\bigoplus\limits_{\tau \in \Sigma^1}H_{d}^{BM}(\Star{\tau}, \F_{d-p}^R) } & {\bigoplus\limits_{\sigma \in \Sigma^2}H_{d}^{BM}(\Star{\sigma}, \F_{d-p}^R) } & \dots
		\arrow["{\oplus_\alpha\overline{d_\alpha^0}}", from=1-2, to=1-3]
		\arrow["{\oplus_\alpha\overline{d_\alpha^1}}", from=1-3, to=1-4]
		\arrow["{\oplus_\alpha\overline{d_\alpha^2}}", from=1-4, to=1-5]
		\arrow[from=1-1, to=1-2]
	\end{tikzcd}\]
	The $\overline{E^2}$ page is then merely the homology of this complex, and since it is concentrated in one row, this must be the $\overline{E^\infty}$ page. In particular, by \eqref{eq:tot_homology}, the complex only has homology in the rightmost position.
\end{proof}
\begin{theorem}\label{thm:stars_poincare_duality}
	Let $R$ be a principal ideal domain, and $(\Sigma,w)$ be an $R$-balanced fan of dimension $d\geq 2$, with $H_{q}^{BM}(\Sigma, \F_{p}^R) = 0$ for $q\not = d$, for all $p$.
	If $(\Star{\gamma},w)$ satisfies TPD over $R$, for each  $\gamma\in \Sigma$ with $\Star{\gamma} \neq \Sigma$, then $(\Sigma,w)$ satisfies TPD over $R$.
\end{theorem}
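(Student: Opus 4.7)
The cap product $\frown [\Sigma,w]\colon H^q(\Sigma,\F_R^p) \to H_{d-q}^{BM}(\Sigma,\F_{d-p}^R)$ is automatically an isomorphism for every $q > 0$: the source vanishes by \cref{proposition:fp_cohom_trivial} since $\Sigma$ is a pointed fan, and the target vanishes by the Borel--Moore hypothesis. Hence the content of tropical Poincaré duality reduces to showing that
\[
\phi^0 \colon \F_R^p(v) \longrightarrow H_d^{BM}(\Sigma,\F_{d-p}^R)
\]
is an isomorphism for each $p$; injectivity is given by \cref{prop:cap_injective}, so the task is surjectivity.

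My plan is to chase the commutative diagram supplied by \cref{prop:diagram_isomorphisms}, whose upper row $C^\bullet$ is $C_c^\bullet(\Sigma,\F_R^p)$ and whose lower row $D^\bullet$ collects the groups $\bigoplus_{\gamma \in \Sigma^r} H_d^{BM}(\Star{\gamma},\F_{d-p}^R)$ in column $r$. The vertical maps $\phi^r$ are direct sums of cap products on the stars by \cref{prop:cap_product_on_stars}, so under the TPD hypothesis on $\Star{\gamma} \neq \Sigma$ they are isomorphisms for every $r \geq 1$. Combining \cref{lemma:tpd_bm_homology} applied to each $\Star{\gamma}$ with $\gamma \neq v$ and the standing hypothesis on $\Sigma = \Star{v}$ yields $H_q^{BM}(\Star{\gamma},\F_p^R) = 0$ for all $\gamma$ and all $q \neq d$, so \cref{prop:lower_row_exact} applies and $D^\bullet$ is exact except at its rightmost slot.

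The crucial additional ingredient is exactness of $C^\bullet$ at position~$1$. Because each $\F_p^R(\gamma)$ is a free $R$-module, one has a canonical identification $C_c^\bullet(\Sigma,\F_R^p) \cong \Hom_R(C_\bullet^{BM}(\Sigma,\F_p^R), R)$ of cochain complexes, with differentials matching via the relation $\rho_{\tau,\sigma} = \iota_{\sigma,\tau}^*$ built into \cref{def:Fp_sheaves}. Since $R$ is a principal ideal domain, the universal coefficient theorem yields
\[
0 \to \operatorname{Ext}^1_R\bigl(H_{q-1}^{BM}(\Sigma,\F_p^R),R\bigr) \to H_c^q(\Sigma,\F_R^p) \to \Hom_R\bigl(H_q^{BM}(\Sigma,\F_p^R),R\bigr) \to 0.
\]
For $1 \leq q \leq d-1$ (a nonempty range thanks to $d \geq 2$), both flanking terms vanish by hypothesis, so $H_c^q(\Sigma,\F_R^p) = 0$; in particular $H_c^1(\Sigma,\F_R^p) = 0$, giving exactness of $C^\bullet$ at position~$1$.

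Surjectivity of $\phi^0$ now falls out by a routine diagram chase. Given $y \in D^0$, set $x = (\phi^1)^{-1}\overline{d^0}(y) \in C^1$; commutativity and injectivity of $\phi^2$ force $\delta^1 x = 0$; exactness of $C^\bullet$ at position~$1$ produces $u \in C^0 = \F_R^p(v)$ with $\delta^0 u = x$; then $\overline{d^0}(\phi^0 u - y) = \phi^1 \delta^0 u - \overline{d^0} y = 0$, and injectivity of $\overline{d^0}$ (lower-row exactness at~$0$) gives $\phi^0 u = y$. The step I expect to require the most care is verifying the compatibility of differentials under the duality $C_c^\bullet \cong \Hom_R(C_\bullet^{BM}, R)$, so that the universal coefficient theorem applies without extra sign bookkeeping; once that is settled, everything else is essentially formal.
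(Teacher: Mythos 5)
Your proposal is correct and takes essentially the same route as the paper: the same commutative diagram from \cref{prop:diagram_isomorphisms}, the same exactness of the lower row via \cref{prop:lower_row_exact}, and the same universal coefficient argument over the PID to get $H_c^q(\Sigma,\F_R^p)=0$ for $q\neq d$, with injectivity of the cap product from \cref{prop:cap_injective}. The only difference is the endgame, and it is cosmetic: you extract surjectivity of $\frown[\Sigma,w]$ by an explicit chase through columns $0$--$2$ (using that $\phi^1$ is an isomorphism and $\phi^2$ is injective), whereas the paper packages the same low-degree information into the short exact sequence of complexes with the cokernel term and concludes $\coker(\frown[\Sigma,w])=0$ from the induced exact sequence of cokernels.
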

\begin{proof}
	By assumption $H_{q}^{BM}(\Sigma, \F_{d-p}^R) = 0$ for $q\neq d$, and $H^q (\Sigma,\F_R^{p})=0$ for $q\neq 0$ by \cref{remark:trivial_hom_cohom}, for all $p$. Thus the cap product map $$\frown[\Sigma,w] \colon H^q (\Sigma,\F_R^{p})\to H_{d-q}^{BM}(\Sigma, \F_{d-p}^R)$$
	is an isomorphism for all $q=1,\dots,d$, for all $p$, and it remains to show that $$\frown[\Sigma,w] \colon H^0 (\Sigma,\F_R^{p})\to H_{d}^{BM}(\Sigma, \F_{d-p}^R)$$
	is an isomorphism for all $p$. 

	Since $H_{q}^{BM}(\Sigma, \F_{d-p}^R) = 0$ for $q\neq d$ and $(\Star{\gamma},w)$ satisfies TPD over $R$ for each $\gamma\in \Sigma$ with $\Star{\gamma} \neq \Sigma$ gives that $H_{d-q}^{BM}(\Star{\gamma}, \F_{d-p}^R) = 0$ for all $\gamma \in \Sigma$. Thus, by \cref{prop:lower_row_exact}, the lower row in diagram \eqref{diag:goal} is exact in all degrees except $d$. 
	
	Moreover, the upper row of diagram \eqref{diag:goal} is the complex $C_c^\bullet (\Sigma, \F_R^p)$, which can be seen to be the dual complex to $C_\bullet^{BM}(\Sigma,\F_p^R)$ by the definitions (\cref{def:cochain_group,def:chain_group,def:Fp_sheaves}). 
	
	The complex $C_\bullet^{BM}(\Sigma,\F_p^R)$ consists only of free $R$-modules, since $\F_p^R(\gamma)$ is a sublattice of $N\otimes_\Z R$ for all $\gamma \in \Sigma$, and the ring $R$ is a principal ideal domain, hence we may apply the Universal Coefficient Theorem for cohomology \cite[Theorem 3.6.5]{Weibel}. Thus, for each $q$, we have:
	% https://q.uiver.app/?q=WzAsNSxbMCwwLCIwIl0sWzEsMCwiXFxFeHRfUiAoSF97cS0xfV57Qk19IChcXFNpZ21hLFxcRl9wXlIpLFIpIl0sWzIsMCwiSF9jXnEgKFxcU2lnbWEsXFxGX3BeUikiXSxbMywwLCJcXEhvbV9SIChIX3txLTF9XntCTX0gKFxcU2lnbWEsXFxGX3BeUiksUikiXSxbNCwwLCIwIl0sWzAsMV0sWzEsMl0sWzIsM10sWzMsNF1d
	\[\begin{tikzcd}[column sep = small]
		0 & {\Ext_R (H_{q-1}^{BM} (\Sigma,\F_p^R),R)} & {H_c^q (\Sigma,\F_p^R)} & {\Hom_R (H_{q-1}^{BM} (\Sigma,\F_p^R),R)} & 0.
		\arrow[from=1-1, to=1-2]
		\arrow[from=1-2, to=1-3]
		\arrow[from=1-3, to=1-4]
		\arrow[from=1-4, to=1-5]
	\end{tikzcd}\]
	Since we assumed $H_{d-q}^{BM}(\Sigma, \F_{d-p}^R) = 0$ for $q\not = 0$, for all $p$, one has $H_c^q (\Sigma,\F_p^R)=0$ for $q\not = d$, for all $p$. Hence the upper row of diagram \eqref{diag:goal} is exact except in the last position.

	The cokernel complex to the chain complex map in \eqref{diag:goal} gives following short exact sequence of chain complexes:
	$$0 \to (C_c^\bullet(\Sigma,\F_p^R), \delta^\bullet) \to \left(\oplus_{\gamma \in \Sigma^\bullet} H_d^{BM} (\Star{\gamma}, \F_{d-p}^R),\; \oplus_\alpha \overline{d_\alpha^\bullet} \right) \to \coker (\frown) \to 0.$$
	This gives a long exact sequence in homology, and since the two first chain complexes are exact in all but the last position, so is the cokernel chain complex. Thus we have the following exact sequence:
	% https://q.uiver.app/?q=WzAsNSxbMSwwLCJcXGNva2VyKFxcZnJvd24gW1xcU2lnbWEsd10pIl0sWzIsMCwiXFxjb2tlcihcXG9wbHVzX1xcdGF1IFxcZnJvd24gW1xcU3RhcntcXHRhdX0sd10pIl0sWzAsMCwiMCJdLFszLDAsIlxcY29rZXIoXFxvcGx1c19cXHNpZ21hIFxcZnJvd24gW1xcU3RhcntcXHNpZ21hfSx3XSkiXSxbNCwwLCJcXGNkb3RzIl0sWzAsMV0sWzIsMF0sWzEsM10sWzMsNF1d
	\[\begin{tikzcd}[column sep=small]
		0 & {\coker(\frown [\Sigma,w])} & {\coker(\oplus_\tau \frown [\Star{\tau},w])} & {\coker(\oplus_\sigma \frown [\Star{\sigma},w])} & \cdots
		\arrow[from=1-2, to=1-3]
		\arrow[from=1-1, to=1-2]
		\arrow[from=1-3, to=1-4]
		\arrow[from=1-4, to=1-5]
	\end{tikzcd}\]
	Since each of the stars $(\Star{\gamma},w)$ satisfies TPD over $R$, we have $\coker(\oplus_\gamma \frown [\Star{\gamma},w])=0$ and so exactness gives ${\coker(\frown [\Sigma,w])}=0$. Thus $\frown [\Sigma,w]$ is both injective by \cref{prop:cap_injective} and surjective, hence is an isomorphism.
\end{proof}
\begin{remark}
	In the proof of \cref{thm:stars_poincare_duality}, the condition that $R$ is a PID is only used to show that $H_{q}^{BM}(\Sigma,\F_p^R)=0$ for all $q\neq d$ implies that $H_c^q(\Sigma,\F_R^p)=0$ for all $q\neq d$. One can let $R$ be an arbitrary commutative ring if we instead assume this latter condition, giving another variant of the theorem.
\end{remark}
It is not sufficient that all the star fans $\Star{\gamma}$ of faces $\gamma \in \Sigma$ with $\gamma\neq v$ satisfy TPD. The assumption $H_{d-q}^{BM}(\Sigma, \F_{d-p}^R) = 0$ for $q\not = 0$, for all $p$, is necessary and not implied by TPD of the faces. This is shown by the following example, which is also studied in \cite{mastersthesis} and in \cite[Section 11.1]{AminiPiquerezFans}.
\begin{example}\label{ex:tpd_faces_not_tpd}
	\begin{figure}
		\includegraphics[width=\textwidth]{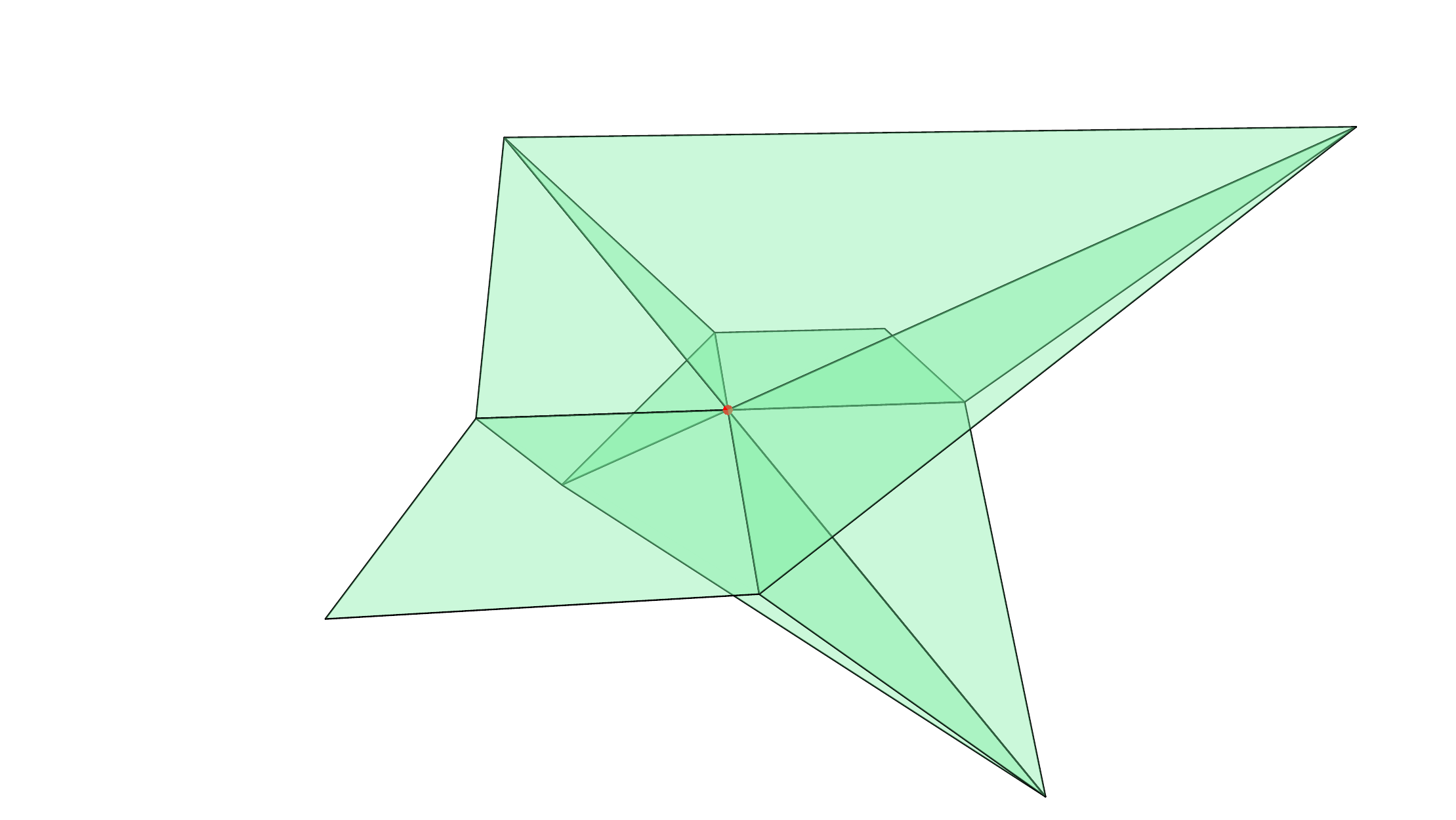}
		\caption{A fan whose faces excluding the vertex satisfy TPD, but does not itself satisfy it. Figure generated using \cite{polymake}}
		\label{fig:balanced_faces_not_tpd}
	\end{figure}
	Let $\Sigma$ be the fan shown in \cref{fig:balanced_faces_not_tpd}, where the rays are 
	$e_1, e_2, -e_1, -e_2, -e_1+e_2 +e_3, -e_1+e_2-e_3, e_1-e_2 +e_3, e_1-e_2 -e_3$. Each of its two-dimensional faces is maximal, so that the star at these faces is just a two-dimensional linear space, which satisfies TPD. Moreover, each ray has exactly three faces meeting in it, so that the stars are uniquely balanced, and satisfy TPD. We could therefore expect \cref{thm:stars_poincare_duality} to give us that the whole fan $\Sigma$ has TPD. 
	
	However, observe that $\dim_\Q \F_2^\Q (\sigma)=1$ for each $\sigma\in\Sigma^2$, while $\dim_\Q \F_2^\Q (\tau)=2$ for each $\tau\in \Sigma^1$ and $\dim_\Q \F_2^\Q (v)=3$. There are $12$ two-dimensional faces and $8$ one-dimensional faces, so that 
	$$\chi(C_\bullet^{BM}(\Sigma,\F_2^\Q))=12-8\cdot 2 +3 =-1.$$
	Since $\chi(C_\bullet^{BM}(\Sigma,\F_2^\Q))=\chi(H_\bullet^{BM}(\Sigma,\F_2^\Q))$, we must have that $H_{1}^{BM}(\Sigma, \F_2^{\Q}) \not \cong 0$. Finally, since $H^{1}(\Sigma, \F_\Q^0)=0$, the cap product $\frown [\Sigma,w] \colon H^{1}(\Sigma, \F_\Q^0) \to H_{1}^{BM}(\Sigma, \F_{2}^\Q)$ cannot be an isomorphism.
\end{example}
\begin{proposition}\label{prop:class_dim_two}
	Let $\mathbbm{k}$ be a field, and $(\Sigma,w)$ a $\mathbbm{k}$-balanced fan of dimension $2$. Suppose $H_{q}^{BM}(\Sigma,\F_{p}^\mathbbm{k}) =0$ for $q\neq 2$, for all $p$.
	Then $\Sigma$ satisfies TPD over $\mathbbm{k}$ if and only if each of the stars $(\Star{\tau},w)$, for $\tau \in \Sigma^1$ satisfies TPD over $\mathbbm{k}$.
\end{proposition}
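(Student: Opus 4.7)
The plan is to handle the two implications separately, using Theorem~\ref{thm:stars_poincare_duality} for the \emph{if} direction and a cokernel chase in the diagram~\eqref{diag:goal} of Proposition~\ref{prop:diagram_isomorphisms} for the \emph{only if} direction.

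For the \emph{if} direction I would verify the hypothesis of Theorem~\ref{thm:stars_poincare_duality}, that $\Star{\gamma}$ is TPD over $k$ for every $\gamma\in\Sigma$ with $\Star{\gamma}\neq\Sigma$. The only such $\gamma$ are rays $\tau\in\Sigma^1$ (TPD by assumption) and maximal faces $\sigma\in\Sigma^2$. In the latter case $\Star{\sigma}$ has support $L_\R(\sigma)$, and by Proposition~\ref{prop:star_homology_cohomology} its Borel--Moore chain complex collapses to the single-term complex $0\to\F_{2-p}^k(\sigma)\to 0$ concentrated in degree $2$; the corresponding cap product becomes contraction against $w(\sigma)\Lambda_\sigma$, which is an isomorphism because $w(\sigma)\in k^\times$ by Definition~\ref{def:contraction_on_face}. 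Theorem~\ref{thm:stars_poincare_duality} then yields TPD of $\Sigma$.

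For the \emph{only if} direction, I specialize the diagram~\eqref{diag:goal} to $d=2$ and write $L^\bullet=\bigoplus_\gamma H_{2,2-p}^{BM}(\Star{\gamma};k)$ for the lower row. Proposition~\ref{prop:cap_injective} makes each vertical cap product injective, producing a short exact sequence of cochain complexes $0\to C_c^\bullet(\Sigma,\F_k^p)\to L^\bullet\to C^\bullet\to 0$. I first check that Proposition~\ref{prop:lower_row_exact} applies: the BM-vanishing for $\gamma=v$ is the standing assumption; for $\gamma\in\Sigma^2$ it is immediate from the single-term complex; and for $\gamma=\tau\in\Sigma^1$ it reduces to surjectivity of the only nontrivial differential in the simplified complex, which holds because $\F_p^k(\tau)=\sum_{\alpha\succ\tau}\bigwedge^p L_k(\alpha)$. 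Thus $L^\bullet$ is exact except in degree $2$, and the upper row is exact except in degree $2$ by the Universal Coefficient Theorem over the field $k$ combined with the BM-vanishing on $\Sigma$. Using $C^0=0$ (from TPD of $\Sigma$) and $C^2=0$ (from the linear-space argument above), the long exact sequence collapses to
\[
0\to C^1\to H_c^2(\Sigma,\F_k^p)\to H^2(L^\bullet)\to 0,
\]
with $C^1=\bigoplus_{\tau\in\Sigma^1}\coker(\frown[\Star{\tau},w])$.

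A dimension count over $k$ finishes the argument. UCT together with TPD of $\Sigma$ gives $\dim_k H_c^2(\Sigma,\F_k^p)=\dim_k\F_{2-p}^k(v)$. On the other hand, exactness of $L^\bullet$ in lower degrees gives $\dim_k H^2(L^\bullet)=\dim_k L^0-\dim_k L^1+\dim_k L^2$, and each $\dim_k L^i$ unpacks via the simplified complexes of Proposition~\ref{prop:star_homology_cohomology}; substituting the Euler-characteristic identity from Proposition~\ref{prop:euler_char_condition} applied to the TPD space $\Sigma$, and using that every $2$-cone has exactly two rays, this alternating sum also evaluates to $\dim_k\F_{2-p}^k(v)$. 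Hence $C^1=0$, so each $\frown[\Star{\tau},w]$ is surjective and, by Proposition~\ref{prop:cap_injective}, an isomorphism; combined with the vanishing of $H^q(\Star{\tau},\F_k^p)$ for $q\neq 0$ and of $H_q^{BM}(\Star{\tau},\F_p^k)$ for $q\neq 2$ already noted, this promotes to full TPD of $\Star{\tau}$. The main obstacle is the bookkeeping that aligns the Euler characteristics of $\Sigma$ and of $L^\bullet$, for which the combinatorial identity $\sum_\tau\sum_{\alpha\succ\tau}\dim_k\F_{2-p}^k(\alpha)=2\sum_\alpha\dim_k\F_{2-p}^k(\alpha)$ is essential.
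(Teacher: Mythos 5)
Your proposal is correct, and every step checks out, but it finishes the \emph{only if} direction by a different route than the paper. The paper treats both implications simultaneously: after showing that $\Star{\sigma}$ satisfies TPD for every $\sigma\in\Sigma^2$ (your linear-space argument, done there via injectivity of the cap product plus equality of dimensions rather than via the unit $w(\sigma)\in k^\times$), it reuses the exact cokernel sequence from the proof of \cref{thm:stars_poincare_duality} to get $\coker(\frown[\Sigma,w])\cong\coker(\oplus_\tau\frown[\Star{\tau},w])$, so that one vanishes exactly when the other does. You instead split the equivalence: the \emph{if} direction by verifying the hypotheses of \cref{thm:stars_poincare_duality} directly (essentially what the paper's argument amounts to), and the \emph{only if} direction via the exact sequence $0\to C^1\to H_c^2(\Sigma,\F_k^p)\to H^2(L^\bullet)\to 0$ together with an Euler-characteristic count based on \cref{prop:euler_char_condition} and the fact that each two-dimensional cone has exactly two rays. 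This is more computational, but it buys a genuine gain in rigor: exactness of the cokernel complex in position $d-1=1$, which the paper's cokernel isomorphism relies on, does not follow from exactness of the two rows of \cref{prop:diagram_isomorphisms} alone --- the long exact sequence only identifies $H^1$ of the cokernel complex with $\ker\bigl(H_c^2(\Sigma,\F_k^p)\to H^2(L^\bullet)\bigr)$ --- and your comparison of $\dim_k H_c^2(\Sigma,\F_k^p)$ with $\dim_k H^2(L^\bullet)$ (both equal to $\dim_k\F_{2-p}^k(v)$), combined with the surjectivity coming from $C^2=0$, supplies exactly the missing injectivity. You are also more explicit than the paper in checking that \cref{prop:lower_row_exact} applies, namely that $H_q^{BM}(\Star{\gamma},\F_p^k)=0$ for $q\neq 2$ for all faces: the standing hypothesis for $\gamma=v$, the one-term complex for $\gamma\in\Sigma^2$, and surjectivity of $\bigoplus_{\alpha\succ\tau}\F_p^k(\alpha)\to\F_p^k(\tau)$ for rays, using \cref{prop:star_homology_cohomology}; these checks are correct and worth keeping, as is your closing remark that surjectivity of the degree-zero cap products upgrades to full TPD of $\Star{\tau}$ because all other cohomology and Borel--Moore homology groups involved vanish.
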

\begin{proof}
	First, we show that for each $\sigma\in \Sigma^2$, the star $\Star{\sigma}$ satisfies TPD over $\mathbbm{k}$. For each $\sigma \in \Sigma^2$, we have from \cref{prop:star_homology_cohomology} that
	\begin{align*}
		H^0 (\Star{\sigma}, \F_\mathbbm{k}^p) &= \F_\mathbbm{k}^p (\sigma) = \left(\textstyle{\bigwedge^p} L_\Z (\sigma)\otimes_\Z \mathbbm{k}  \right)^*, \\
		H_d^{BM} (\Star{\sigma}, \F_{d-p}^\mathbbm{k}) &= \F_{d-p}^\mathbbm{k} (\sigma)= \textstyle{\bigwedge^{d-p}} L_\Z (\sigma)\otimes_\Z \mathbbm{k}.
	\end{align*}
	Moreover, the cap product is an injective map by \cref{prop:cap_injective}. These two vector spaces have the same dimension, hence the cap product is an isomorphism.
	
	Next, we consider again the sequence 
	% https://q.uiver.app/?q=WzAsNSxbMSwwLCJcXGNva2VyKFxcZnJvd24gW1xcU2lnbWEsd10pIl0sWzIsMCwiXFxjb2tlcihcXG9wbHVzX1xcdGF1IFxcZnJvd24gW1xcU3RhcntcXHRhdX0sd10pIl0sWzAsMCwiMCJdLFszLDAsIlxcY29rZXIoXFxvcGx1c19cXHNpZ21hIFxcZnJvd24gW1xcU3RhcntcXHNpZ21hfSx3XSkiXSxbNCwwLCJcXGNkb3RzIl0sWzAsMV0sWzIsMF0sWzEsM10sWzMsNF1d
	\[\begin{tikzcd}[column sep=small]
		0 & {\coker(\frown [\Sigma,w])} & {\coker(\oplus_\tau \frown [\Star{\tau},w])} & {\coker(\oplus_\sigma \frown [\Star{\sigma},w])} & \cdots
		\arrow[from=1-2, to=1-3]
		\arrow[from=1-1, to=1-2]
		\arrow[from=1-3, to=1-4]
		\arrow[from=1-4, to=1-5]
	\end{tikzcd}\]
	from the proof of \cref{thm:stars_poincare_duality}. Since $\Star{\sigma}$ satisfies TPD over $k$, $\coker(\oplus_\sigma \frown [\Star{\sigma},w])=0$, and so $\coker(\frown [\Sigma,w]) \cong \coker(\oplus_\tau \frown [\Star{\tau},w])$. Since both these maps are injective by \cref{prop:cap_injective}, the result follows.
\end{proof}
\begin{remark}	
	\cref{thm:stars_poincare_duality} shows that under the assumption of the vanishing of Borel--Moore homology, TPD on a fan $\Sigma$ can be deduced from TPD on its faces. In fact, it is not necessary to assume that all the faces satisfy TPD:
	In general, the ``vertical first'' spectral sequence in \cref{prop:lower_row_exact} degenerates on page 2 when $H_q^{BM}(\Star{\gamma}, \F_p^R)=0$, for $q\not = d$ and all $p$, for each face $\gamma \in \Sigma$. However, the exactness of the lower row in diagram \eqref{diag:goal} in positions 0 and 1 follow from the weaker assumption that $H_q^{BM}(\Star{\gamma}, \F_p^R)=0$, for $q\not = d$ and all $p$, for each face $\gamma \in \Sigma^i$, with $i=0,1$. One can then show, in a restricted version of the proof of \cref{thm:stars_poincare_duality}, that TPD for all $\tau \in \Sigma^1$ implies that $\Sigma$ satisfies TPD.
\end{remark}
	
\subsection{A characterization of local TPD spaces}\label{subsection:local_tpd_theorem}
We now turn to studying fans which satisfy TPD at every face. Using \cref{thm:stars_poincare_duality}, we characterize such fans as the ones for which the condition holds in codimension 1 along with a vanishing condition on Borel--Moore homology, which was suggested to us by Amini and Piquerez \cite{AminiPiquerezFans}.

\begin{definition}\label{def:local_TPD_space}
	Let $R$ be a ring, and $(\Sigma,w)$ an $R$-balanced fan. If, for each face $\gamma \in \Sigma$, the star fan $\Star{\gamma}$ satisfies tropical Poincaré duality over $R$, we say that $\Sigma$ is a \emph{local tropical Poincaré duality space over $R$}.
\end{definition}

In particular, this implies that $\Sigma$ satisfies TPD over $R$. In the case where $R=\Z$ or $\Q$, being a local TPD space is equivalent to the \emph{tropical smoothness} introduced by Amini and Piquerez in \cite{AminiPiquerezFans}. We use a different notion of the star of a face, but the equivalence of the definitions can be seen from \cite[Proposition 3.17]{AminiPiquerezFans}, which in turn follows from the tropical Künneth theorem \cite[Theorem B]{GrossShokriehSheaf}.

\begin{theorem}\label{thm:local_tpds_char}
	Let $R$ be a principal ideal domain, and $(\Sigma,w)$ a $d$-dimensional $R$-balanced fan. Then $\Sigma$ is a local TPD space over $R$ if and only if $H_q^{BM}(\Star{\gamma}, \F_{p}^R)=0$ for all $\gamma\in \Sigma$ and $q\neq d$, and for all faces $\beta$ of codimension 1, the star fans $\Star{\beta}$ are TPD spaces over $R$.
\end{theorem}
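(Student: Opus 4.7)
The forward direction is immediate: if $\Sigma$ is a local TPD space, then every $\Star{\gamma}$ is TPD, so \cref{lemma:tpd_bm_homology} applied to the $d$-dimensional fan $\Star{\gamma}$ yields the Borel–Moore vanishing in (1), while the codimension 1 condition in (2) is a special case of the definition.

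For the backward direction, I assume (1) and (2) and show by induction on the codimension $c = d - \dim\gamma$ that $\Star{\gamma}$ satisfies TPD for every $\gamma \in \Sigma$. The case $c = 1$ is precisely hypothesis (2). For the base case $c = 0$, the star of a maximal cone is (a fan structure on) the linear space $L_R(\gamma)$ carrying the single weight $w(\gamma)$, and by the formula in \cref{prop:cap_product_on_stars} the cap product reduces to contraction against $w(\gamma)\Lambda_\gamma$, which by \cref{def:contraction_on_face} is an isomorphism exactly when $w(\gamma) \in R^\times$. To force this from (2), fix any codimension 1 face $\beta \prec \gamma$; the star $\Star{\beta}$ splits as a product $L(\beta)_\R \times \Sigma'$, where $\Sigma' \subseteq N_\R / L(\beta)_\R$ is a one-dimensional fan whose rays correspond to the maximal cones $\alpha \succ \beta$ with weights $w(\alpha)$. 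A Künneth-style argument (in the spirit of \cite{GrossShokriehSheaf}) reduces TPD of $\Star{\beta}$ to TPD of $\Sigma'$, and \cref{prop:class_dim_1} then forces every such $w(\alpha)$, and in particular $w(\gamma)$, to be a unit in $R$.

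For the inductive step with $c \geq 2$, apply \cref{thm:stars_poincare_duality} to the $d$-dimensional $R$-balanced fan $\Star{\gamma}$ (balanced by \cref{prop:stars_are_balanced}, and with $d \geq 2$ because $c \geq 2$): the required Borel–Moore vanishing on $\Star{\gamma}$ is exactly (1), and for each face $\tilde\kappa$ of $\Star{\gamma}$ with $\Star{\tilde\kappa} \neq \Star{\gamma}$ — equivalently, each $\kappa \succ \gamma$ in $\Sigma$ — self-similarity of stars identifies the star of $\tilde\kappa$ within $\Star{\gamma}$ with the star $\Star{\kappa}$ within $\Sigma$, which has codimension strictly less than $c$ and is therefore TPD by the inductive hypothesis. \cref{thm:stars_poincare_duality} then produces TPD of $\Star{\gamma}$, closing the induction. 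Taking $\gamma = v$ recovers TPD of $\Sigma$ itself, and the statement for all faces is the local TPD conclusion. The main obstacle in this plan is not the inductive step, which is a direct application of \cref{thm:stars_poincare_duality}, but rather the base case $c = 0$: establishing that the weights on maximal cones are units requires the Künneth reduction to the one-dimensional classification, and the general-PID version of Künneth needed here is not developed explicitly in the preceding text.
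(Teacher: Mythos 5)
Your overall strategy is the same as the paper's: the forward direction via \cref{lemma:tpd_bm_homology}, and for the converse an induction on codimension in which \cref{thm:stars_poincare_duality} is applied to $\Star{\gamma}$ (balanced by \cref{prop:stars_are_balanced}), using the Borel--Moore vanishing hypothesis and the identification, up to subdivision, of the stars of faces of $\Star{\gamma}$ not lying in its lineality space with the stars $\Star{\kappa}$, $\kappa \succ \gamma$, of strictly smaller codimension; TPD is insensitive to the choice of subdivision by \cref{prop:star_homology_cohomology}, so this step is fine and matches the paper's inductive argument.

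The one genuine issue is the one you flag yourself: the base case $c=0$, i.e.\ showing that hypothesis (2) forces each weight $w(\alpha)$, $\alpha \in \Sigma^d$, to be a unit, so that the stars of maximal cones (linear spaces with weight $w(\alpha)$) are TPD. Routing this through a Künneth splitting $\Star{\beta} \cong \Sigma_\beta \times \R^{d-1}$ is indeed not available over a general PID within the paper -- this is precisely why \cref{cor:codimension_1_geometric} is stated only over $\Z$ -- so as written your base case is incomplete. But no Künneth is needed: argue directly, mirroring the last paragraph of the proof of \cref{prop:class_dim_1}. Fix $\beta \in \Sigma^{d-1}$ with $\beta \prec \alpha$. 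By \cref{prop:star_homology_cohomology}, $H_d^{BM}(\Star{\beta},\F_0^R)$ is the kernel of the map $\bigoplus_{\alpha' \succ \beta} R \to R$, $(x_{\alpha'}) \mapsto \sum_{\alpha'} \mathcal{O}(\beta,\alpha')\, x_{\alpha'}$, and by \cref{prop:cap_product_on_stars} the cap product $H^0(\Star{\beta},\F_R^d) \to H_d^{BM}(\Star{\beta},\F_0^R)$ sends $\phi$ to $(w(\alpha')\,\phi(\Lambda_{\alpha'}))_{\alpha' \succ \beta}$ up to sign. Balancing forces at least two maximal cones containing $\beta$, since otherwise $\partial\,\mathrm{Ch}(\Star{\beta},w)$ would have $\beta$-component $w(\alpha)\Lambda_\alpha \neq 0$, as $w(\alpha)$ is not a zero-divisor. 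Hence the element whose $\alpha$-component is $1$, whose component at one other maximal cone $\alpha''$ is $-\mathcal{O}(\beta,\alpha)\mathcal{O}(\beta,\alpha'')$, and which vanishes elsewhere is a class in $H_d^{BM}(\Star{\beta},\F_0^R)$; surjectivity of the cap product produces $\phi$ with $w(\alpha)\phi(\Lambda_\alpha)=1$, so $w(\alpha) \in R^\times$. With this replacement your proof closes completely; note that on this point you are in fact more careful than the paper's own write-up, which asserts that all the relevant stars of faces of $\Star{\mu}$ are TPD ``by assumption'' without separately addressing the maximal cones.
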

\begin{proof}
	If $\Sigma$ is a local TPD space over $R$, each of the star fans $\Star{\gamma}$ for $\gamma \in \Sigma$, in particular the codimension 1 faces are TPD spaces over $R$. Moreover, this implies that the
	Borel–Moore homology groups $H_q^{BM}(\Star{\gamma},\F_p^R)$ vanish for $q\neq d$ and all $\gamma \in \Sigma$.
	
	Next, assume that the star fans $\Star{\beta}$ for $\beta\in \Sigma^{d-1}$ are local TPD spaces over $R$ and $H_q^{BM}(\Star{\gamma}, \F_{p}^R)=0$ for all $\gamma\in \Sigma$. First, we apply \cref{thm:stars_poincare_duality} to all faces of codimension two $\mu$ in $\Sigma$. For a given $\mu\in \Sigma^{d-2}$, we have $H_q^{BM}(\Star{\mu}, \F_{p}^R)=0$ for all $p$ by assumption. Moreover, each face $\widetilde{\beta}\in \Star{\mu}$ is a subdivision of a face $\beta\in \Sigma$ with $\beta\prec \mu$. By assumption all these faces of $\Sigma$ are TPD spaces, and therefore the subdivided faces $\widetilde{\beta}$ of $\Star{\mu}$ are as well. Hence we may apply \cref{thm:stars_poincare_duality}, and conclude that $\Star{\mu}$ is a TPD space. Thus, all of codimension 2 of $\Sigma$ are TPD spaces. Proceeding inductively, we can apply \cref{thm:stars_poincare_duality} to all the stars $\Star{\gamma}$ of faces $\gamma \in \Sigma$. Thus $\Sigma$ is a local TPD space.
\end{proof}
\begin{corollary}\label{cor:codimension_1_geometric}
	Let $(\Sigma, w)$ a $d$-dimensional $\Z$-balanced fan. Then $\Sigma$ is a local TPD space over $\Z$ if and only if $H_q^{BM}(\Star{\gamma},\F_p^\Z) = 0$ for $q\neq d$ and all $\gamma \in \Sigma$, all the weights are $\pm 1$, and for all faces $\beta$ of codimension 1, the star fans $\Star{\beta}$ are uniquely $\Z$-balanced in the sense of \cref{def:balanced}.
\end{corollary}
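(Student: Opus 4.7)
The plan is to combine the characterization of local TPD spaces from \cref{thm:local_tpds_char} with the dimension-one classification \cref{prop:class_dim_1}, bridged by the tropical Künneth formula of \cite{GrossShokriehSheaf}. By \cref{thm:local_tpds_char}, $\Sigma$ is a local TPD space over $\Z$ if and only if $H_q^{BM}(\Star{\gamma}, \F_p^\Z) = 0$ for all $\gamma \in \Sigma$ and $q \neq d$, together with $\Star{\beta}$ being a TPD space over $\Z$ for each $\beta \in \Sigma^{d-1}$. Since the BM vanishing condition appears verbatim in the statement of the corollary, it suffices to show that, under that vanishing, TPD of $\Star{\beta}$ is equivalent to $\Star{\beta}$ being uniquely $\Z$-balanced together with all weights $w(\alpha)$ for $\alpha \succ \beta$ being $\pm 1$.

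The key structural observation is that for a codimension-one face $\beta$, the star fan decomposes as a product $\Star{\beta} \cong L_\R(\beta) \times T_\beta$, where $L_\R(\beta)$ is the $(d-1)$-dimensional linear span of $\beta$ viewed as a fan with a single maximal cone, and $T_\beta$ is a one-dimensional fan whose rays are indexed by the top-dimensional faces $\alpha \succ \beta$ via the primitive transversal vectors $v_{\alpha/\beta}$ taken modulo $L_\Z(\beta)$, each inheriting the weight $w(\alpha)$. Applying the tropical Künneth formula of \cite[Theorem B]{GrossShokriehSheaf} to this product yields compatible isomorphisms of tropical (co)homology groups that respect the cap product with the fundamental class. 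Since a linear space is automatically a uniquely balanced TPD space, this shows that $\Star{\beta}$ satisfies TPD over $\Z$ (respectively is uniquely $\Z$-balanced) if and only if $T_\beta$ does.

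Invoking \cref{prop:class_dim_1} for the one-dimensional fan $T_\beta$, we find that $T_\beta$ is TPD over $\Z$ exactly when it is uniquely $\Z$-balanced and all of its weights are units in $\Z$, namely $\pm 1$. Translating back through the Künneth isomorphism, the condition ``$\Star{\beta}$ is TPD over $\Z$ for every $\beta \in \Sigma^{d-1}$'' becomes ``$\Star{\beta}$ is uniquely $\Z$-balanced and $w(\alpha) \in \{\pm 1\}$ for every $\alpha \succ \beta$''. Since every top-dimensional $\alpha \in \Sigma^d$ has at least one codimension-one face, letting $\beta$ range over $\Sigma^{d-1}$ captures every top-dimensional weight of $\Sigma$, so the weight condition becomes ``all weights are $\pm 1$''. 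Combined with the BM vanishing hypothesis and \cref{thm:local_tpds_char}, this produces exactly the corollary. The main step that will require care is verifying that the Künneth isomorphism from \cite{GrossShokriehSheaf} is compatible with the fundamental classes and cap products under the product decomposition of $\Star{\beta}$; once this is cleanly pulled from that reference, the remainder of the argument is a direct assembly of \cref{thm:local_tpds_char}, Künneth, and \cref{prop:class_dim_1}.
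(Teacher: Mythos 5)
Your proposal is correct and follows essentially the same route as the paper: decompose $\Star{\beta}$ for $\beta \in \Sigma^{d-1}$ as a product of a $(d-1)$-dimensional linear space with a one-dimensional fan, reduce via the Künneth-type product compatibility of TPD (the paper packages this step as a citation of \cite[Proposition 3.18]{AminiPiquerezFans}, itself resting on \cite[Theorem B]{GrossShokriehSheaf}), apply \cref{prop:class_dim_1} to the one-dimensional factor, and conclude by comparing with \cref{thm:local_tpds_char}. The only point needing care, as you note, is the compatibility of the Künneth isomorphism with fundamental classes and cap products, which is exactly what the cited proposition of Amini--Piquerez supplies.
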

\begin{proof}
	For each face $\beta$ of codimension 1 of $\Sigma$, observe that each face of dimension $d$ of the star fan $\Star{\beta}$ is a linear space, hence is a TPD space over $\Z$. Furthermore, each star fan $\Star{\beta}$ has a $(d-1)$-dimensional lineality space, and we may write $\Star{\beta} = \Sigma_\beta \times \R^{d-1}$ for some ``reduced star'' $\Sigma_\beta$ of dimension 1. Since $\R^{d-1}$ satisfies TPD over $\Z$, by \cite[Proposition 3.18]{AminiPiquerezFans}, the star fan $\Star{\beta}$ is a TPD space over $\Z$ if and only if $\Sigma_\beta$ is a a TPD space over $\Z$. By \cref{thm:class_dim_1}, this is the case if and only if $\Sigma_\beta$ is uniquely $\Z$-balanced with $\pm 1$-weights. Therefore $\Star{\beta}$ is a local TPD space over $\Z$ if and only if it is uniquely $\Z$-balanced with $\pm 1$-weights. 
	Finally, the equivalence follows from comparing with \cref{thm:local_tpds_char}.
\end{proof}
\begin{remark}
	Passing from \cref{thm:local_tpds_char} to \cref{cor:codimension_1_geometric} is mostly dependent on the Künneth formula for the $\F_p^\Z$ cosheaves from \cite{GrossShokriehSheaf}. A generalization of the Künneth formula to $\F_p^R$ for another ring $R$ would also lead to a generalization of \cref{cor:codimension_1_geometric}.
\end{remark}
\cref{thm:local_tpds_char} illustrates that it would be desirable to obtain a geometric understanding of the vanishing condition for the tropical Borel–Moore homology.
\begin{question}[Geometry of BM homology vanishing?]\label{question:BM_vanishing}
	Let $(\Sigma,w)$ be an $R$-balanced $d$-dimensional fan. Can the fans with $H_q^{BM}(\Star{\gamma}, \F_{p}^R)=0$ for each face $\gamma\in \Sigma$, $q\neq d$ and all $p$ be geometrically characterized?
\end{question}
We note that it is not clear whether TPD of the whole fan implies local TPD. We have not been able to construct a fan satisfying TPD such that the star of one of its faces does not.
\begin{question}[Global versus Local TPD]\label{question:global_vs_local_fan}
	Let $(\Sigma,w)$ be an $R$-balanced fan which satisfies TPD over $R$. Does $\Star{\gamma}$ also satisfy TPD over $R$ for each $\gamma \in \Sigma$?
\end{question}
Even assuming that $H_q^{BM}(\Star{\gamma}, \F_{p}^R)=0$ for $q\neq d$, along with TPD on the whole fan $(\Sigma,w)$, the proof of \cref{thm:stars_poincare_duality} does not directly imply that $\Sigma$ is a local TPD space generally.
In algebraic topology, the question of going from Poincaré duality globally on a CW complex to Poincaré duality locally has been investigated using techniques from surgery and homotopy theory (see \cite{RanickiPD} for an overview).

    \section{Tropical Poincaré duality for polyhedral spaces}\label{section:poly_complexes}
In this section, we use the results of \cref{section:local} to prove that abstract tropical cycles which have charts to local TPD spaces satisfy tropical Poincaré duality.
In \cite[Theorem 5.3]{Lefschetz11}, the Mayer--Vietoris argument that shows that tropical manifolds satisfy tropical Poincaré duality over $\Z$ is predicated on the existence of charts to fans of matroids, which are local TPD spaces over $\Z$.
This suggests that the local TPD spaces characterized in \cref{thm:local_tpds_char} are useful as building blocks for general spaces satisfying TPD. We show this in \cref{thm:local_tpd_polyspaces_have_tpd}.
We refer to \cite{Lefschetz11} for the definitions of rational polyhedral spaces, rational polyhedral structures, as well as the tropical cohomology and Borel--Moore homology theories available on such spaces. Here we generalize these to take coefficients in an arbitrary commutative ring $R$, as in \cref{def:tropical_homology_cohomology}.
Moreover, one can generalize \cite[Definitions 4.7-4.8]{Lefschetz11} of the weight function to an arbitrary commutative ring, as in \cref{def:weights_R}, which gives rise to a fundamental chain $\mathrm{Ch}(X,w)\in C_d^{BM}(X,\F_d^R)$, for $d=\dim X$.
\begin{definition}\label{def:abstract_tropical_cycle}
	A rational polyhedral space $X$ of pure dimension $d$ with a rational polyhedral structure $\Ccal$ and a weight function $w$ is \emph{balanced} if the fundamental chain $\mathrm{Ch}(X,w)\in C_d^{BM}(X,\F_d^R)$ is closed, inducing a \emph{fundamental class} $[X,w]\in H_d^{BM}(X,\F_d^R)$ in tropical Borel--Moore homology. We call the triple $(X,\Ccal,w)$ an \emph{abstract tropical $R$-cycle}.
\end{definition}
Abstract tropical $R$-cycles are the candidate spaces for \emph{satisfying tropical Poincaré duality over $R$}, slightly generalizing \cite[Definition 4.11]{Lefschetz11}.
\begin{definition}
	Let $X$ be an abstract tropical $R$-cycle of dimension $d$. The fundamental class $[X,w]$ induces a \emph{cap product}
	$$\frown [X,w] \colon H^{q}(X,\F_R^p) \to H_{d-p}^{BM}(X,\F_{d-p}^R)$$
	between tropical cohomology and tropical Borel--Moore homology. If these are isomorphisms for all $p$ and $q$, we say that $X$ is a \emph{tropical Poincaré duality space over $R$}.
\end{definition}
\begin{definition}\label{def:local_tpd_polyspace}
	Let $(X, \Ccal,w)$ be an abstract tropical $R$-cycle over a commutative ring $R$. We say that $(X, \Ccal,w)$ is a \emph{local tropical Poincaré duality space}  if for each $\sigma \in \Ccal$, the rational polyhedral complexes $\set{\phi_\sigma(\tau)\mid \tau \in \Star{\sigma}}$ are local TPD spaces over $R$.
\end{definition}
\begin{example}
	Tropical manifolds, which have charts to Bergman fans of matroids, are examples of local TPD spaces over $\Z$ and $\R$, see \cite{Lefschetz11,JellShawSmacka}.
\end{example}
\begin{theorem}\label{thm:local_tpd_polyspaces_have_tpd}
	Let $X$ be a local tropical Poincaré duality space over a principal ideal domain $R$. Then $X$ satisfies  tropical Poincaré duality over $R$.
\end{theorem}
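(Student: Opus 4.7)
The plan is to imitate the Mayer--Vietoris induction used in \cite[Theorem 5.3]{Lefschetz11} for tropical manifolds, where the role previously played by Bergman fans of matroids is now taken by the arbitrary local TPD fans arising as local models of $X$. Fix the polyhedral structure $\Ccal$ on $X$ from \cref{def:local_tpd_polyspace}. For each cell $\sigma\in \Ccal$, let $U_\sigma$ denote its open star, that is, the union of the relative interiors of the cells $\tau$ with $\sigma\preceq \tau$. These $U_\sigma$ are open and cover $X$, and by hypothesis, each is identified via $\phi_\sigma$ with an open neighborhood of the vertex in the local TPD fan $\Star{\sigma}$.

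The first step is to handle the base case: each open star $U_\sigma$ satisfies TPD over $R$. Since $\Star{\sigma}$ is a local TPD space, it is itself a TPD space (the star of $\Star{\sigma}$ at its own vertex is $\Star{\sigma}$ again). The induced cellular structure on $U_\sigma$ has one cell for each $\tau \succeq \sigma$, and a radial deformation onto the vertex in the local model identifies the cellular (co)chain complexes of $U_\sigma$ with those of the fan $\Star{\sigma}$, matching the fundamental chain of $(U_\sigma,w)$ with that of $(\Star{\sigma},w)$. Hence the cap product $\frown [U_\sigma, w]$ is an isomorphism in every degree.

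The second step is a Mayer--Vietoris induction. For any two open subsets $U, V\subseteq X$ which are unions of cells of a common refinement of $\Ccal$, the short exact sequence
\begin{equation*}
0 \to C_\bullet^{BM}(U\cap V,\F_p^R) \to C_\bullet^{BM}(U,\F_p^R) \oplus C_\bullet^{BM}(V, \F_p^R) \to C_\bullet^{BM}(U\cup V, \F_p^R) \to 0
\end{equation*}
of chain complexes, together with its dual for compactly supported cochains, induces long exact Mayer--Vietoris sequences in tropical Borel--Moore homology and in tropical cohomology respectively. The cap product with the fundamental class is natural with respect to these sequences, so the 5-lemma propagates TPD from $U$, $V$, and $U\cap V$ to $U\cup V$. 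Combined with the base case, induction on the number of cells in a cover shows that any finite union of open stars satisfies TPD, and a colimit argument, using that $X$ is paracompact and second countable and hence exhaustible by such finite unions, yields the conclusion for $X$ itself.

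The main technical obstacle will be establishing the naturality of the cap product with respect to the Mayer--Vietoris connecting homomorphisms at the chain level. This is the cellular analogue of a standard fact for singular (co)homology, but it requires careful bookkeeping of orientations and of how the fundamental chain $\mathrm{Ch}(X,w)$ restricts to $U$, $V$ and $U\cap V$. A secondary issue is ensuring that $H^\bullet$ and $H_\bullet^{BM}$ behave well under the colimits arising from the exhaustion, which should follow from the cellular nature of the definitions and the independence of tropical (co)homology from the choice of polyhedral structure.
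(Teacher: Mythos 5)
Your proposal is essentially the paper's own argument: the proof given in the paper simply observes that the two steps of the Mayer--Vietoris proof of \cite[Theorem 5.3]{Lefschetz11} carry through once Bergman fans of matroids are replaced by the local TPD fans of \cref{def:local_tpd_polyspace} and $\Z$-modules by $R$-modules, and this is exactly the reduction you perform: open stars, identified with the star fans, as the base case, then Mayer--Vietoris together with naturality of the cap product and the five lemma for the induction.

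Two corrections to your bookkeeping, neither of which changes the strategy. First, for Borel--Moore chains, extension by zero from an open subset is not a chain map (a cell whose relative interior lies in $U\cap V$ may have faces lying in $U$ but not in $V$), so the short exact sequence you display does not exist as written; the correct sequence is built from restrictions,
\begin{equation*}
0 \to C_\bullet^{BM}(U\cup V,\F_p^R) \to C_\bullet^{BM}(U,\F_p^R)\oplus C_\bullet^{BM}(V,\F_p^R) \to C_\bullet^{BM}(U\cap V,\F_p^R)\to 0,
\end{equation*}
so the resulting Mayer--Vietoris sequence in Borel--Moore homology runs in the same direction as the one for ordinary tropical cohomology $H^\bullet(-,\F_R^p)$ (it is the latter, not compactly supported cohomology, that enters, since the cap product here is $H^q \to H_{d-q}^{BM}$); this common direction, together with the fact that the fundamental class restricts to open subsets, is what makes the ladder commute and the five lemma applicable. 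Second, the polyhedral structure $\Ccal$ is a finite family by \cref{def:rational_polyhedral_structure}, so finitely many open stars cover $X$ and your induction on the number of opens already concludes the proof; the final colimit/exhaustion step is unnecessary, and relying on it would be delicate since Borel--Moore homology is only contravariant for open inclusions and does not commute with such colimits.
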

\begin{proof}
	The two steps of the proof given in \cite[Proof of Theorem 5.3]{Lefschetz11} carry through. Since the open stars of faces satisfy TPD over $R$, the first step is identical, noting that the same arguments carry through working in the category of $R$-modules. The induction argument given in the second step also carries through, as the same sequence of complexes can be constructed in the category of $R$-modules.
\end{proof}
\begin{remark}
	Note that \cref{def:local_tpd_polyspace} in the case where $R=\Z$ is equivalent to the definition of \emph{smooth tropical variety} given in \cite[Definition 3.22]{AminiPiquerezFans}, such that the case $R=\Z$ of \cref{thm:local_tpd_polyspaces_have_tpd} is equivalent to \cite[Theorem 3.23]{AminiPiquerezFans}.
\end{remark}

\cref{thm:local_tpd_polyspaces_have_tpd} justifies the naming in \cref{def:local_tpd_polyspace}, generalizing the relationship between local TPD spaces and TPD spaces as defined in \cref{def:local_TPD_space} and \cref{def:tpd}. Moreover, \cref{question:global_vs_local_fan} about the relationship between local TPD and TPD are also applicable in this more general setting.
\begin{question}[Global versus Local TPD for abstract tropical cycles]\label{question:global_vs_local_polyspace}
	Let $(X,\Ccal,w)$ be an abstract tropical $R$-cycle satisfying TPD over $R$. Does $\Star{\gamma}$ also satisfy TPD over $R$ for each $\gamma \in \Ccal$?
\end{question}

    \printbibliography
\end{document}